\numberwithin{figure}{section}
\numberwithin{figure}{section}
\newtheorem{theorem}{Theorem}[section]
\newtheorem{lemma}[theorem]{Lemma}
\newtheorem{proposition}[theorem]{Proposition}
\newtheorem{corollary}[theorem]{Corollary}
\theoremstyle{definition}
\newtheorem{remark}[theorem]{Remark}
\numberwithin{equation}{section}
\newcommand{\id}{\mathrm{id}}
\newcommand{\R}{\mathbb{R}}
\newcommand{\N}{\mathbb{N}}
\newcommand{\Ha}{\mathcal{H}}
\newcommand{\beq}{\begin{equation}}
\newcommand{\eeq}{\end{equation}}
\newcommand{\dist}{{\rm dist}}
\newcommand{\eps}{\varepsilon}
\newcommand{\la}{\langle}
\newcommand{\op}{\mathrm{op}}
\newcommand{\ra}{\rangle}
\newcommand{\dia}{\mathrm{diam}}
\newcommand{\diver}{\operatorname{div}}
\newcommand{\pa}{\partial}
\newcommand{\spt}{\mathrm{spt}}
\newcommand{\medint}{-\kern -,375cm\int}
\newcommand{\medintinrigo}{-\kern -,315cm\int}
\newcommand{\Tr}{\text{Tr}}
\renewcommand{\d}{\mathrm{d}}
\begin{document}

\title[Consistency of the flat flow]{Consistency of the flat flow solution to  the volume preserving mean curvature flow}

\author{Vesa Julin}

\author{Joonas Niinikoski}

\keywords{}

\begin{abstract} 
We consider the flat flow solution, obtained via discrete minimizing movement scheme, to  the volume preserving mean curvature flow starting from $C^{1,1}$-regular set. We prove the consistency principle which states that (any) flat flow solution agrees with the classical solution as long as the latter exists. In particular, flat flow solution is unique and smooth up to the first singular time. We obtain the result by proving the full regularity for  the discrete time approximation of the  flat flow such that the regularity estimates are stable with respect to the time discretization.   Our method can also be applied in the case of the mean curvature flow and thus it  provides an alternative proof,  not relying on comparison principle,  for the consistency between  the flat flow solution and the classical solution for  $C^{1,1}$-regular initial sets. 
\end{abstract}

\maketitle

\tableofcontents


\section{Introduction}

\subsection{Statement of the Main Theorem}
In this paper we consider the flat flow solution to the volume preserving mean curvature flow, which is a weak notion of solution obtained via discrete minimizing movement scheme. Our main goal is to prove the full regularity of the 
 flat flow up to the first singular time when the initial set is $C^{1,1}$-regular. As a corollary we obtain the consistency principle  between the flat flow and the classical solution.

Let us begin by recalling that a smooth family of sets $(E_t)_{t\in [0,T)}\subset \R^{n+1}$, for some $T>0$, is a solution to the volume preserving mean curvature flow  if it satisfies
\beq \label{eq:VMCF}
V_t = - (H_{E_t} - \bar H_{E_t}),
\eeq
where $V_t$ denotes the normal velocity, $H_{E_t}$ the mean curvature and  $\bar H_{E_t}:=\fint_{\pa E_t}H_{E_t}\ \d \Ha^n $ the integral average of the mean curvature of the evolving boundary $\pa E_t$. An important feature is that \eqref{eq:VMCF}  can be seen as a $L^2$-gradient flow of the surface area. Since it also preserves the volume, it can be regarded as the evolutionary counterpart to the isoperimetric problem. 

If the initial set $E_0$ is regular enough, e.g. it satisfies interior and exterior ball condition,  the equation \eqref{eq:VMCF}  has a unique smooth solution for a short interval of time \cite{ES}. The classical result by Huisken \cite{Hui2} states that for convex initial sets the classical solution exists for all times and converges exponentially fast to a sphere. Similarly it follows from \cite{ES, Joonas} that if the initial set is close to a local minimum of the isoperimetric problem, the equation \eqref{eq:VMCF} does not develop singularities and convergences exponentially fast.  However,  for generic initial sets the equation  \eqref{eq:VMCF} may develop singularities in finite time \cite{M, MaSim}.  In fact, unlike the standard mean curvature flow, \eqref{eq:VMCF} may develop singularities even in the plane and the boundary may also collapse such  that the curvature of the  evolving boundary stays uniformly bounded  up to the singular time.  It is therefore natural to find a proper notion of  weak solution for   \eqref{eq:VMCF}   which is defined for all times even if the flow develops singularities.  The crucial difference between \eqref{eq:VMCF} and the mean curvature flow is that the former is nonlocal and does not satisfy the comparison priciple. Therefore we cannot directly use the notion of viscosity solution to define the level-set solution via the methods  introduced by Chen-Giga-Goto \cite{CGG} and Evans-Spruck \cite{EvSp}, although in \cite{KK} Kim-Kwon are able to find a viscosity solution for \eqref{eq:VMCF} for  star-shaped sets.  Instead, we  may use the gradient flow structure to   obtain a weak solution called \emph{flat flow} via discrete minimizing movement scheme as first introduced by Almgren-Taylor-Wang \cite{ATW} and Luckhaus-St\"urzenhecker \cite{LS}   for the mean curvature flow, and then implemented to the volume preserving setting \eqref{eq:VMCF} by Mugnai-Seis-Spadaro \cite{MSS}. We give the precise definition in Section 3.  The existence of the flat flow solution of \eqref{eq:VMCF} is proven in \cite{MSS} and the recent results   \cite{DeGKu, FJM, JMPS, JN, MoPoSpa} indicate that it has the expected asymptotic behavior. Indeed, it is proven in \cite{JMPS} that in the plane any flat flow solution of \eqref{eq:VMCF}, starting from any set of finite perimeter,  converges exponentially fast to a union of equisize disks. 

One of the main issues with the flat flow solution  is that it has a priori very low regularity. The second issue is that it is not clear if the procedure provides a solution to the equation \eqref{eq:VMCF}   in some weak  sense. The first issue is related to the  regularity and the  second one is the problem of consistency, and it is rather clear that these are closely related to each other. Indeed, the flat flow is obtained as a limit of a discrete minimizing scheme, in the spirit of the Euler implicit method, where the time disretization is led to zero. If the flow remains smooth enough, as the time discretization goes to zero,  then one can show that the limiting flat flow provides a solution  to the  equation  \eqref{eq:VMCF}.   However, the only case when this seems to be known is the case when the initial set is convex. In this case the construction in \cite{BCCN}, which however is slightly different than \cite{MSS}, provides a flow of sets which remains convex and thus gives a solution to \eqref{eq:VMCF}.  One may also define a distributional solution to \eqref{eq:VMCF} (see \cite{MSS}) and in a recent work  Laux \cite{Lau} proves that this notion of solution, and in fact any gradient-ﬂow calibration,  agrees with the classical solution as long as the latter exists (see also  \cite{HL}). 

The issue with regularity and consistency is better understood in the case of the standard mean curvature flow. It is proven in \cite{ATW} that the flat flow for the mean curvature equation agrees with the classical solution as long as the latter exists. If we are in a situation where the level-set solution is unique, i.e., it  does not develop fattening,  then due to the result by Chambolle \cite{Cha} we know that the  flat flow coincides with the level-set solution, see also \cite{CMP, CMP2}. We may then  use the result in \cite{EvSp4} to conclude that  the flat flow is a 'subsolution' to the mean curvature flow in the sense of  Brakke and has  the partial regularity proven in \cite{Bra}.  Thus we have  the consistency and partial regularity for the mean curvature flow when the flow does not develop fattening. In addition, due to the recent result by DePhilippis-Laux \cite{DL} together with the classical result in \cite{LS}, we know that  the flat flow is a distributional solution to the mean curvature flow equation when the initial set is mean convex.

As we mentioned above, here we study the regularity of the flat flow solution of \eqref{eq:VMCF} when the initial set is $C^{1,1}$-regular, which is the same as to say that the set satisfies interior and exterior ball conditions. Throughout the paper we will say that an open set $E \subset \R^{n+1}$ satisfies \emph{uniform ball condition with radius } $r>0$ if  it satisfies  interior and exterior ball condition with radius $r>0$.  Our main theorem reads as follows. 
\begin{theorem}
\label{thm1}
Assume that $E_0 \subset \R^{n+1}$  is an open and bounded set which satisfies uniform ball condition (UBC) with radius $r_0$. There   is time $T_0>0$, which depends on $r_0$ and $n$, such that any flat flow solution  $(E_t)_{t \geq 0}$  of \eqref{eq:VMCF} starting from $E_0$  satisfies UBC  with radius $r_0/2$ for all $t \leq T_0$.  This condition is open in the sense that if $(E_t)_{t \geq 0}$ satisfies UBC with radius $r$ for all $t \leq T$, then there is $\delta>0$ such that it satisfies UBC with radius $r/2$ for all $t < T+\delta$.  

Moreover, the flat  flow $(E_t)_{t \geq 0}$  becomes instantaneously smooth and remains smooth as long  as it satisfies UBC. To be more precise, if $(E_t)_{t \geq 0}$ satisfies UBC with radius $r$  for all $t \leq T$, then for every $k \in \N$ it holds
\beq \label{eq:smoothing}
\sup_{t \in (0,T]} \big( t^k \|H_{E_t}\|_{H^{k}(\pa E_t)}^2 \big) \leq C_k,
\eeq
where $C_k$ depends on $T$, $n$, $k$, $r$ and $|E_0|$.
\end{theorem}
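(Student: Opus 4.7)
\medskip

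My overall strategy follows the route suggested in the abstract: carry out the regularity analysis at the level of the discrete minimising movement scheme of Mugnai--Seis--Spadaro with time step $h>0$, establish UBC-preservation and smoothing estimates with constants independent of $h$, and pass to the limit $h\to 0$. Writing $(E^h_k)_{k\geq 0}$ for the discrete iterates, each $E^h_{k+1}$ is a minimiser of
\[
\mathcal F_h(E;E^h_k)\;=\;P(E)+\tfrac{1}{h}\int_{E\triangle E^h_k}\dist(\cdot,\pa E^h_k)\,\d x
\]
under the volume constraint $|E|=|E_0|$. Its Euler--Lagrange equation on $\pa E^h_{k+1}$ reads $H_{E^h_{k+1}}=\lambda^h_{k+1}-h^{-1}d_{E^h_k}$, where $d_{E^h_k}$ is the signed distance to $\pa E^h_k$ and $\lambda^h_{k+1}$ the volume Lagrange multiplier.

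The crux is a quantitative one-step preservation lemma: if $E^h_k$ satisfies UBC with radius $r$, then $E^h_{k+1}$ satisfies UBC with radius $r-Ch$, where $C=C(r,n,|E_0|)$. I would obtain this from two ingredients. First, an a-priori bound on $\lambda^h_{k+1}$ by testing the EL equation against a volume-preserving normal perturbation and combining it with the isoperimetric inequality and the energy identity, which controls the non-local correction. Second, a barrier / maximum-principle argument: for any ball $B_r(x)\subset E^h_k$, compare $\pa E^h_{k+1}$ with the slightly shrunk $B_{r-Ch}(x)$ and use the pointwise EL equation at a would-be contact point to derive a contradiction; dually for the exterior ball. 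Iterating $\lfloor T_0/h\rfloor$ times yields UBC with radius $r_0/2$ up to a time $T_0=O(r_0)$, stably in $h$.

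The smoothing estimate \eqref{eq:smoothing} then follows by a discrete parabolic bootstrap. Under UBC with radius $r$, the EL equation gives $\|H_{E^h_{k+1}}\|_\infty\leq C(r)$, and parametrising $\pa E^h_{k+1}$ as a normal graph over $\pa E^h_k$ turns it into a nonlinear elliptic equation for the graph height, to which Schauder-type estimates apply with $h$-independent constants. Meanwhile, the Lyapunov (perimeter-monotonicity) identity yields
\[
\sum_k h\,\bigl\|H_{E^h_{k+1}}-\bar H_{E^h_{k+1}}\bigr\|_{L^2(\pa E^h_{k+1})}^2 \;\leq\; P(E_0),
\]
providing an $L^2$-in-time curvature control. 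Differentiating the EL equation along the flow, iterating the one-step elliptic regularity, and interpolating in time in the parabolic manner produces the characteristic smoothing $t^k\|H_{E_t}\|_{H^k}^2\leq C_k$. Compactness of sets with uniform UBC — they admit uniform $C^{1,1}$ normal-graph parametrisations — then lets me pass to the limit $h\to 0$ along any subsequence and inherit every estimate for the flat flow $(E_t)$. The openness statement is a corollary: apply the short-time lemma with $E_T$, which is smooth by \eqref{eq:smoothing}, as a new initial datum.

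The main obstacle is the non-locality of \eqref{eq:VMCF} embodied in the Lagrange multiplier $\lambda^h_{k+1}$: unlike for plain mean curvature flow, the discrete scheme does not satisfy a comparison principle, and the barrier step in the one-step lemma must absorb the global coupling between distant portions of $\pa E^h_{k+1}$. A secondary but serious technical point is ensuring that every constant in the bootstrap is $h$-independent, so that an $O(h)$ error per step does not compound badly over the $O(1/h)$ iterations needed to cover a fixed time interval. Once these are in hand, the remaining ingredients — compactness, limit passage, and the openness iteration — are comparatively routine.
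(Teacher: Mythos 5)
Your outline matches the paper's architecture (work at the discrete level, prove $h$-stable one-step estimates, iterate, pass to the limit), but the central step — the quantitative one-step preservation of the uniform ball condition — is not achieved by the argument you propose, and this is precisely where the paper's genuinely new idea lives.

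The barrier argument you describe (slide a shrunk interior ball $B_{r-Ch}(x)$ inside $E^h_{k+1}$, use the Euler--Lagrange equation $H_{E^h_{k+1}}=\lambda^h-h^{-1}d_{E^h_k}$ at a first contact point, and contradict a bound on $\lambda^h$) does work, but what it proves is only that every interior/exterior ball of $E^h_k$, shrunk by $Ch$, avoids $\pa E^h_{k+1}$ — equivalently, the linear-in-$h$ distance bound $|d_{E^h_k}|\leq Ch$ on $\pa E^h_{k+1}$ (Proposition \ref{prop:distance-bound} in the paper). This does \emph{not} give UBC for $E^h_{k+1}$: the shrunk balls stop a distance $O(h)$ short of $\pa E^h_k$, while $\pa E^h_{k+1}$ lives anywhere in the $O(h)$-neighbourhood of $\pa E^h_k$, so in general none of these balls is tangent to $\pa E^h_{k+1}$ at any of its points. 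UBC requires tangent balls of radius $r-Ch$ at \emph{every} point of the new boundary, i.e.\ a pointwise curvature-type bound on $\pa E^h_{k+1}$, and within the $O(h)$ gap the new boundary could a priori oscillate badly; indeed the direct elliptic route (Allard plus Schauder applied to the Euler--Lagrange equation, whose right-hand side carries a factor $h^{-1}$) only yields UBC with the degenerate radius $c_0h^{1/3}$ (Lemma \ref{lemma:improved-regularity}). The paper closes this gap with a discrete two-point function (Huisken distance-comparison) argument: the quantity $\|S_{E}\|_{L^\infty}$ with $S_E(x,y)=\frac{(x-y)\cdot\nu_E(x)}{|x-y|^2}$ equals $\tfrac{1}{2r_E}$, and a maximum-principle computation at extremal pairs $(x,y)$, combined with the discrete evolution law for the normal (Lemma \ref{lem:maaginen}), yields $\|S_{E^h_{k+1}}\|_{L^\infty}\leq\|S_{E^h_k}\|_{L^\infty}+C_nh\|S_{E^h_k}\|_{L^\infty}^3$, which iterates over $O(1/h)$ steps. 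Nothing in your proposal substitutes for this.

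A secondary concern is the smoothing step: one-step Schauder estimates cannot be $h$-independent for the same reason (the $h^{-1}$ in the Euler--Lagrange equation), so the bootstrap must be a genuinely parabolic energy argument summed over the $O(1/h)$ steps. The paper does this by deriving a discrete analogue of $\frac{\d}{\d t}\Delta^kH=\Delta^{k+1}H+\text{l.o.t.}$ (Proposition \ref{prop:maaginen-deri-2}) and running telescoping $L^2$ estimates; your sketch points in this direction but the dissipation identity alone does not deliver it. Finally, note that restarting the flow from $E_T$ to get openness is delicate because flat flows are not known to be unique a priori; the paper instead passes the discrete differential inequality for $\|S\|_{L^\infty}$ to the limit (which is exactly why the higher-order estimates are needed, via Ascoli--Arzel\`a) and reads off openness from the resulting ODE.
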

In fact, we obtain even stronger result since we prove the uniform ball condition and the estimate \eqref{eq:smoothing} directly for the discrete approximative flat flow $(E_t^h)_{t \geq 0}$ such that the estimates hold for all $h \leq h_0$ for constants independent of $h$. However, we choose to state the regularity result only for the limiting flow since the precise statement, which can be found in Theorem \ref{thm2} and Theorem \ref{thm3},   is rather technical. The first part of the theorem  is related  to the result by Swartz-Yip \cite{SY}, where the authors prove curvature bounds for the Merriman-Bence-Osher thresholding algorithm for the mean curvature flow.

It is well-known that we have uniqueness among  smooth solutions of \eqref{eq:VMCF}. Therefore an important consequence of Theorem \ref{thm1} is the consistency between the notion of flat flow solution and the classical solution of \eqref{eq:VMCF} when the initial set is $C^{1,1}$-regular.

\begin{corollary}\label{coro}
Assume that $E_0 \subset \R^{n+1}$  is an open and  bounded set which satisfies  uniform ball condition. Let $(\hat E_t)_{t\in [0,T)}\subset \R^{n+1}$ be the classical solution of \eqref{eq:VMCF} starting from $E_0$, where $T>0$ is the maximal time of existence, and let $(E_t)_{t\geq 0}\subset \R^{n+1}$ be a flat flow solution of \eqref{eq:VMCF} starting from $E_0$. Then 
\[
\hat E_t = E_t  \qquad \text{for all } \, t \in [0,T).
\]
\end{corollary}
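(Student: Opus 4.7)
The plan is an open-closed continuation argument. Set
\[
T^* := \sup\{t \in [0,T) : \hat E_s = E_s \text{ for all } s \in [0,t]\};
\]
the goal is to show $T^* = T$.

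To see that $T^* > 0$, first apply Theorem~\ref{thm1} to obtain $T_0 > 0$ such that the flat flow $(E_t)$ satisfies UBC with radius $r_0/2$ on $[0,T_0]$ and is smooth on $(0, T_0]$ with controlled Sobolev norms of the curvature. By the sharper discrete version referenced in the paper, the same estimates hold uniformly in $h$ for the approximations $(E_t^h)$. Each minimizing movement step satisfies an Euler--Lagrange relation linking $H_{E_{kh}^h}$, its average, and $-d_{E_{(k-1)h}^h}/h$, where $d_E$ denotes the signed distance to $\partial E$. With the uniform higher regularity in hand, one passes to the limit $h \to 0$ to verify that $(E_t)$ is a classical solution of \eqref{eq:VMCF} on $(0, T_0]$. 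Since $E_0 = \hat E_0$ and smooth solutions of \eqref{eq:VMCF} starting from a $C^{1,1}$-datum are unique, we conclude $E_t = \hat E_t$ on $[0, \min(T_0, T))$.

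Now suppose toward contradiction that $T^* < T$. On $[0, T^*]$ the classical solution $\hat E_t$ is smooth, hence satisfies UBC with some radius $r > 0$. By the definition of $T^*$ together with the Hausdorff-continuity of $(E_t)$ in time (built into the minimizing movement construction), $E_t = \hat E_t$ on $[0, T^*]$, so $(E_t)$ also satisfies UBC with radius $r$ there. The openness clause of Theorem~\ref{thm1} then yields $\delta > 0$ such that $(E_t)$ satisfies UBC with radius $r/2$ on $[0, T^* + \delta)$ and is smooth on this interval. Repeating the limit passage of the previous paragraph on a neighbourhood of $T^*$ with $E_{T^*}$ in place of $E_0$ shows that $(E_t)$ is a classical solution of \eqref{eq:VMCF} past $T^*$, and smooth uniqueness yields $E_t = \hat E_t$ on $[T^*, T^* + \delta')$ for some $\delta' > 0$, contradicting the definition of $T^*$.

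The main obstacle is the limit passage from the discrete scheme: converting the quantitative regularity \eqref{eq:smoothing} (and its discrete counterpart) into the statement that $(E_t)$ actually solves \eqref{eq:VMCF} classically, rather than merely being smooth. This amounts to controlling the discrete velocity $-d_{E_{(k-1)h}^h}/h$ in a topology strong enough to identify its limit with the normal velocity of the limiting smooth family, which is precisely what the uniform-in-$h$ higher-order estimates are engineered to provide. Once this is in hand, classical uniqueness of smooth solutions of \eqref{eq:VMCF}, the Hausdorff continuity of $(E_t)$ in time, and the openness clause already built into Theorem~\ref{thm1} complete the argument.
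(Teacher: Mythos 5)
Your proposal is correct and follows essentially the same route as the paper: establish UBC and smoothness of the flat flow via Theorem \ref{thm1}, pass the Euler--Lagrange relation $-d_{E_{(k-1)h}^h}/h = H_{E_{kh}^h}-\lambda^h$ to the limit using the uniform-in-$h$ higher-order estimates to identify the flat flow as a classical solution, and then combine smooth uniqueness with the openness of the UBC condition in a continuation argument over $[0,T)$. You correctly isolate the limit passage as the only nontrivial step, and your treatment of it is at the same level of detail as the paper's own (brief) discussion at the end of Section 5.
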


Let us next briefly comment on the regularity estimate \eqref{eq:smoothing}. The first part of Theorem \ref{thm1} (see Theorem \ref{thm2} in Section 4) provides a bound for the uniform ball condition for a short time $[0,T_0]$ and the proof of Theorem \ref{thm2} also provides an estimate how the curvature grows in time for the approximative flat flow $(E_t^h)_{t \geq 0}$. However, without higher order regularity bounds we are not able to pass these growth-estimates to the limit as $h \to 0$ (see the discussion at the end of Section 5). Therefore our main motivation to prove \eqref{eq:smoothing} is to pass these curvature estimates to the limit as $h \to 0$ by Ascoli-Arzela theorem, and deduce that the uniform ball condition is, in fact, an open condition and therefore the flat flow agrees with the classical solution over the whole maximal time of existence. Of course, in addition to that, \eqref{eq:smoothing} quantifies the smoothing effect of the equation in a sharp way.

\subsection{An overview of the proof}

The proof of Theorem \ref{thm1} is divided in three sections and therefore we give here a short  overview.  We recall that in the minimizing movements scheme, for a fixed time discretization step $h>0$, we obtain a sequence of sets  $E_{k}^h$ such that $E_0^h = E_0$ is the initial set and $E_{k+1}^h$ is defined inductively as a minimizer of the functional 
\[
\mathcal{F}_h(E, E_{k}^h) = P(E) + \frac{1}{h} \int_E d_{E_{k}^h} \, dx  + \frac{1}{\sqrt{h}}\big| |E| - |E_0|\big|,
\]
where $d_{E_{k}^h}$ denotes the signed distance function. A flat flow is then defined as any cluster point of the discrete flow as $h \to 0$.  We first prove in Proposition \ref{prop:distance-bound} via energy comparison argument, that if $E_{k}^h$ is smooth and satisfies UBC with radius $r_0$ then the subsequent set $E_{k+1}^h$ satisfies the following distance estimate 
\[
|d_{E_{k}^h}| \leq \frac{C}{r_0} h \qquad \text{on } \, \pa E_{k+1}^h. 
\]
The above estimate is crucial as it implies that the speed of the discrete flow is sublinear. It also implies a bound for the mean curvature and the regularity of $E_{k+1}^h$ by  applying the Allard's regularity theory \cite{Allard}. The most crucial part of the proof of the main theorem is then to show that the subsequent set $ E_{k+1}^h$ also satisfies UBC with a quantified radius. 

We solve this problem by adopting the two-point function method due to Huisken \cite{Hui} to the discrete setting (see also the works by Andrews \cite{And} and Brendle \cite{Bre} for an overview of the topic).  The idea  is to  double the variables and to study the maximum and minimum values of the function 
\[
S_{E_{k}^h}(x,y) = \frac{(x-y)\cdot \nu(x)}{|x-y|^2}
\]
for $x\neq y \in \pa E_{k}^h$. The point is that the extremal values of $S_{E_{k}^h}$ are related to the uniform ball condition radius of the set $E_{k}^h$ (see Lemma \ref{lem:S-E}). We use the maximum principle to prove the following familiar inequality (see Lemma \ref{lem:2-point-arg})
\[
\frac{\|S_{E_{k+1}^h}\|_{L^\infty} -\|S_{E_{k}^h}\|_{L^\infty}}{h} \leq C \|S_{E_{k}^h}\|_{L^\infty}^3. 
\]
By iterating the above estimate, we obtain that  the sets $E_{k}^h$ satisfy UBC for  all $k \leq T_0 h^{-1}$, where the constant $T_0$ is related to the UBC of the initial set.   This implies the first part of Theorem \ref{thm1} (see Theorem \ref{thm2}). An important technical  part in this argument is the discrete version of the formula for $\frac{d}{dt} \nu_{E_t}$ which we derive in Lemma  \ref{lem:maaginen}. 

The formula  in Lemma \ref{lem:maaginen} is, in fact, so simple that we are able to differentiate it multiple times and obtain in Proposition \ref{prop:maaginen-deri-2} a discrete analog for the formula 
\beq \label{eq:continuous-deri-high}
\frac{\d}{\d t} \Delta^k H_{E_t} = \Delta^{k+1} H_{E_t} + \text{lower order terms}, 
\eeq
where $\Delta$ denotes the Laplace-Beltrami operator (see e.g.  \cite{Mantegazza2002}). The lower order terms are due to the nonlinearity of the equation \eqref{eq:VMCF} and we need the notation and tools from differential geometry  in order to control them. We stress that this is the only part in the paper where we need  to introduce higher order covariant derivatives.   After we have obtained the discrete version of the formula \eqref{eq:continuous-deri-high} and bounded the lower order error terms, we may adopt the argument from \cite{FJM3D} to the discrete setting and obtain the full regularity of the flow. Finally we point out that the argument can be adopted to the case of the mean curvature flow essentially without any modifications.

\section{Notation and preliminary results}
Throughout this paper, $C_n \in \R_+$ stands for a generic dimensional constant which may change from line to line.
We denote the open ball with radius $r$ centered at $x$ by $B_r(x) \subset \R^{n+1}$ and by $B_r$ if it is centered at the origin. We denote by $\textbf{C}(x,r,R) \subset \R^{n+1}$  the open cylinder 
\[
\textbf{C}(x,r,R) := B_r^n(x') \times (-R+ x_{n+1},R+x_{n+1}),
\]
where $B_r^n \subset \R^n$ denotes  the $n$-dimensional ball and $x = (x',x_{n+1}) \in \R^n \times \R$.
For a given set $E \subset \R^{n+1}$ and a radius $r \in \R_+$ we set 
its $r$-enlargement $\mathcal N_r(E) = \{ x \in \R^{n+1} : \dist(x,E) < r\}$. Note that we may alternatively write this
as the Minkowski sum $E + B_r$.
The notation $\nabla^k F$ stands for $k$:th order differential of a vector field $F : \R^{n+1} \to \R^m$. For a matrix $\mathcal A \in \R^k \otimes \R^k$ we denote by $|\mathcal A|$ its Frobenius norm $\sqrt{\Tr(\mathcal A^\text{T} \mathcal A)}$ and by $|\mathcal A|_{\op}$  its operator norm $\max\{|\mathcal A \, \xi| : \xi \in \R^k, |\xi|=1\}$.

If a set $S \subset \R^{k}$ is Lebesgue-measurable, we denote its $k$-dimensional Lebesgue measure (or volume) by $|S|$. Given a non-empty set $E \subset \R^{n+1}$ we denote the distance function by $\text{dist}_E(x) := \inf_{y \in E}|x-y|$ and the \emph{signed distance function} by $d_E:\R^{n+1} \rightarrow \R$ , which is defined as
\beq
\label{sdf}
d_E(x) := \begin{cases} \text{dist}_E(x) , \,\,&\text{for }\, x \in  \R^{n+1} \setminus E\\
- \text{dist}_{\R^n \setminus E}(x)  , \,\, &\text{for }\, x \in  E.  
\end{cases} 
\eeq
Then clearly it holds $\text{dist}_{\pa E} = |d_E|$. If for a given  point $x \in \R^{n+1}$ there is a unique distance minimizer $y_x$ on 
$\pa E$ (that is $|x-y_x|=\text{dist}_{\pa E}(x)$), we denote $y_x$ by $\pi_{\pa E}(x)$ and call it the projection of $x$ onto $\pa E$.   
For a set of finite perimeter $E \subset \R^{n+1}$ we denote its reduced boundary by $\pa^* E$. Then $P(E;F)= \Ha^n(\pa^* E \cap F)$ for every Borel set $F \subset \R^{n+1}$ and $P(E) = \Ha^n(\pa^* E)$.

\subsection{Regular sets and tangential differentiation}
We will mostly deal with regular and bounded sets $E \subset \R^{n+1}$. 
As usual, a bounded set $E \subset \R^{n+1}$ is said to be $C^{k,\alpha}$-regular, with $k \geq 1$ and $0\leq \alpha \leq 1$, if for every $x \in \pa E$ we find a cylinder $\textbf{C}(x,r,R)$ and a function $f \in C^{k,\alpha}(B^n_r(x'))$ with $|f-x_{n+1}|<R$
such that, up to rotating the coordinates, we may write
\[
\mathrm{int}(E) \cap \textbf{C}(x,r,R) = \{y \in \textbf{C}(x,r,R): y_{n+1} < f(y')\}.
\]
In particular, $\pa E$ is a compact and embedded $C^{k,\alpha}$-hypersurface. Again, if $\alpha = 0$, we say that $E$ is $C^k$-regular and if $k=\infty$, we say that $E$ is smooth. If $r$ and $R$ are independent of the choice of $x$ and the $C^{k,\alpha}$-norm of $g$ has a bound, also independent of $x$, then we say that $E$ is uniformly $C^{k,\alpha}$-regular. 
We denote the outer unit normal by $\nu_E$, or simply $\nu$ if the meaning is clear from the context.
Note that  $\nu_E \in C^{k-1,\alpha}(\pa E; \pa B_1)$. We always assume that the orientation of $\pa E$ is induced by $\nu_E$.
We define the matrix field $P_{\pa E} : \pa E \rightarrow \R^{n+1} \otimes \R^{n+1}$ by setting $P_{\pa E} = I - \nu_E\otimes
\nu_E$. For a given  point $x \in \pa E$ the map $P_{\pa E}(x)$ is the orthogonal projection onto the \emph{geometric tangent plane} $G_x \pa E := \langle \nu_E (x) \rangle^\perp$. 

For given a vector field $F \in C^l(\R^{n+1};\R^m)$ with $1 \leq l \leq k$ we define its \emph{tangential differential} along $\Sigma = \pa E$ as
a matrix field $\nabla_{\tau_E} F : \pa E \rightarrow \R^m \otimes \R^{n+1}$ by setting
\beq
\label{eq:tangdiff}
\nabla_{\tau_E} F =\nabla F P_{\pa E} = \nabla F - (\nabla F   \nu_E) \otimes  \nu_E.
\eeq
When the meaning is clear from the context, we abbreviate $E$ from the notation and write simply $\nabla_\tau F$. 
In the case $m=n+1$, the \emph{tangential divergence} of $F$  is defined as $\diver_\tau F = \Tr(\nabla_\tau F)$
and the \emph{tangential Jacobian} $J_\tau F$ of $F$ is defined on $\pa E$ as
\beq
\label{def:tJacob}
J_\tau F = \sqrt{\det\left((\nabla_\tau F \circ \iota_\tau)^T(\nabla_\tau F \circ \iota_\tau)\right)},
\eeq
where $\iota_\tau (x)$ at $x \in \pa E$ is the inclusion $G_x \pa E \hookrightarrow \R^{n+1}$. In the case $m=1$, the notation $\nabla_\tau F$ also stands for the \emph{tangential gradient}  $P_{\pa E} \nabla F$. 
Note that $\nabla_\tau F$ is $C^{l-1}$-regular and independent of how $F$ is extended beyond $\pa E$. 
On the other hand, every $ G \in C^l(\pa E,\R^m)$, with $1 \leq l \leq k$, admits a $C^k$-extension $ F : \R^n \rightarrow \R^m$
so we may extend the concept of tangential differential to concern $G$ simply by setting $\nabla_\tau G = \nabla_\tau F$ and further define the other introduced concepts in similar manner.

If $E$ is $C^k$-regular for $k \geq 2$, we may define its \emph{second fundamental form}, with respect to the orientation 
$\nu_E$, as a matrix field $B_E : \pa E\rightarrow \R^{n+1}\otimes\R^{n+1}$ given by
\[
B_E (x) = \sum_i \lambda_i (x) \kappa_i (x) \otimes \kappa_i (x),
\]
where the (unit)  principal directions $\kappa_1 (x), \ldots, \kappa_n (x) \in \langle \nu_E(x) \rangle^\perp$
and the principal curvatures $\lambda_1 (x), \ldots, \lambda_n (x)$ at $x \in \pa E$ are given by the 
orientation $\nu_E$. The corresponding (scalar) mean curvature field $H_E$ is then given pointwise as the sum of the principal curvatures, i.e., $H_E = \Tr(B_E)$. Note that  we may simply write 
\beq
\label{eq:BH}
B_E = \nabla_\tau \nu_E \quad \text{and} \quad H_E = \diver_\tau \nu_E.  
\eeq
Finally, we define the \emph{tangential Hessian} for given $u \in C^2(\pa E)$ as $\nabla_\tau^2 u = \nabla_\tau (\nabla_\tau u)$ and further the 
\emph{tangential Laplacian} or the \emph{Laplace-Beltrami} of $u$ as
\[
\Delta_{\tau} u = \diver_\tau(\nabla_\tau u) = \text{Tr} (\nabla_\tau^2 u). 
\]
The tangential Laplacian $\Delta_\tau F$ for $F \in C^2(\pa E;\R^{n+1})$ is defined as $\sum_i \Delta_\tau (F \cdot e_i) e_i$.
We will need the following identities on $\pa E$
\beq
\label{eq:Delta-id-normal}
\Delta_\tau \id  = - H_E \nu_E \quad \text{and} \quad  \Delta_\tau \nu_E= - |B_E|^2 \nu_E + \nabla_\tau H_E \ \ \text{if $E$ is $C^3$-regular.}
\eeq
The importance of the mean curvature  $H_E$ lies in the surface divergence theorem
which states that for every $G \in C^1(\pa E;\R^{n+1})$ it holds
\beq
\int_{\pa E} \diver_{\tau} G \, \d \Ha^n = \int_{\pa E} H_E (G \cdot \nu_E) \, \d \Ha^n.
\eeq

The concept of mean curvature can be generalized to the setting of bounded sets of finite perimeter in the varifold sense. 
Indeed, for a set of finite perimeter $E \subset \R^{n+1}$, we may define
the tangential divergence $\diver_\tau F$ of $F \in C^1(\R^{n+1};\R^{n+1})$ along $\pa^*E$ in the 
same way as in the regular case by replacing the outer unit normal field with the measure theoretic
normal field $\pa^* E \rightarrow \pa B_1$ which we also denote by $\nu_E$. Then, if $E$ is a bounded set of finite perimeter and there is $g \in L^1(\pa^*E,\Ha^n|_{\pa^* E})$ such that 
\beq
\label{weakcurvature}
\int_{\pa^*E} \diver_{\tau} F \, \d \Ha^n = \int_{\pa^*E} g (F \cdot \nu_E) \, \d \Ha^n 
\eeq
for every $F \in C^1(\R^{n+1};\R^{n+1})$, we say that $g$ is a \emph{generalized mean curvature} of $E$ and denote it by $H_E$.
As mentioned, this is a concept from the context of varifold theory for which we refer to  \cite{Sim} as a standard introduction.  
Since $\pa^* E$ is $\Ha^n$-rectifiable set, one may treat the pair $(\pa^* E, \Ha^n|_{\pa^*E})$ as an rectifiable integral
varifold of multiplicity one.

\subsection{Riemannian geometry}
We need the notation related to Riemannian geometry and as an introduction to the topic we refer to \cite{Lee}. Let us assume that $E \subset \R^{n+1}$ is a smooth and bounded set and denote $\Sigma = \pa E$. Since $\Sigma$ is  embedded in $\R^{n+1}$  it has natural metric $g$ induced by the Euclidian metric. Then $(\Sigma, g)$ is a Riemannian manifold and we denote the inner product on each tangent space $X, Y \in T_x \Sigma$ by $\la X, Y \ra$, which we may write in local coordinates as 
\[
\la X,Y \ra = g(X,Y) = g_{ij} X^iY^j.
\]
We extend the inner product in a natural way for tensors. Note that  $x \cdot y$ denotes the inner product of two vectors in $\R^{n+1}$. We denote smooth vector fields on $\Sigma$ by $\mathscr{T}(\Sigma)$ and by  a slight abuse of notation we denote smooth $k$:th order tensor fields on $\Sigma$  by $\mathscr{T}^k(\Sigma)$. We write $X^i$ for vectors and $Z_i$ for covectors in local coordinates. We denote the Riemannian connection on  $\Sigma$ by $\tilde \nabla$ and recall that  for a function $u \in C^\infty(\Sigma)$ the covariant derivative  $\tilde \nabla u $ is a $1$-tensor field defined for  $X  \in \mathscr{T}( \Sigma)$  as
\[
\tilde \nabla u(X)  = \tilde \nabla_X u = X u,
\]
i.e., the derivative  of $u$ in the direction of $X$. The  covariant derivative  of  a smooth $k$-tensor field $F \in \mathscr{T}^k( \Sigma)$, denoted  by  $\tilde \nabla F$, is a $(k+1)$-tensor field    and  for $ Y_1, \dots, Y_k, X \in \mathscr{T}( \Sigma)$  we have the recursive formula
\begin{equation} \label{eq:recursive}
\tilde \nabla F(Y_1, \dots, Y_k, X) = (\tilde \nabla_X F)(Y_1, \dots, Y_k) ,
\end{equation}
where
\[
(\tilde \nabla_X F)(Y_1, \dots, Y_k) = X F(Y_1, \dots, Y_k) - \sum_{i=1}^k F(Y_1, \dots,  \tilde \nabla_X Y_i ,\dots, Y_k).
\]
Here $\tilde \nabla_X Y$ is  the covariant derivative of $Y$ in the direction of $X$ (see \cite{Lee}) and since $\tilde \nabla$ is the Riemannian connection it holds  $\tilde \nabla_X Y = \tilde \nabla_Y X  + [X,Y]$ for every $X, Y \in \mathscr{T}( \Sigma)$. We denote the $k$:th order covariant derivative of a function $u$ on $\Sigma$ by $\tilde \nabla^k u \in \mathscr{T}^k( \Sigma)$ and the Laplace-Beltrami operator by $\Delta $. Note that for functions it holds  $\Delta u = \Delta_\tau u$. The notation $\tilde \nabla_{i_k} \cdots \tilde \nabla_{i_1} u$ means a  coefficient of $\tilde \nabla^k u$ in local coordinates. We may raise the index of $\tilde \nabla_i u$ by using the inverse of the metric tensor  $g^{ij}$  as $\tilde \nabla^i u = g^{ij}\tilde \nabla_j u$. We note that the tangential gradient of $u : \Sigma \to \R$ is equivalent to its covariant derivative in the sense that for every vector field $X \in \mathscr{T}(\Sigma)$ we find a unique vector field  $\tilde{X} : \Sigma \to \R^{n+1}$ which satisfies $\tilde{X}\cdot \nu_E = 0$  and 
\[
\tilde \nabla_X u = \nabla_\tau u \cdot \tilde{X}.
\]
Similarly it holds $\tilde \nabla^2 u(X,Y) = \nabla_\tau^2 u \tilde X \cdot \tilde Y$. Finally we recall that the notation $\nabla^k$ always stands for the standard Euclidian $k$:th order differential for an ambient function.

 We define the Riemann curvature tensor $R \in \mathscr{T}^4(\Sigma)$ \cite{Lee, MantegazzaBook}  via interchange of covariant  derivatives of a vector field  $Y^i$ and a covector field  $Z_i$ as   
\begin{equation}
\label{eq:curv-tensor}
\begin{split}
&\tilde \nabla_i \tilde \nabla_j Y^s - \tilde \nabla_j \tilde \nabla_i Y^s = R_{ijkl} g^{ks} Y^l,\\
&\tilde \nabla_i \tilde \nabla_j Z_k - \tilde \nabla_j \tilde \nabla_i Z_k =  R_{ijkl} g^{ls} Z_s,
\end{split}
\end{equation}
where we have used the Einstein summation convention. We may write the Riemann tensor in local coordinates by using the second fundamental form $B$, which in the Riemannian setting is understood to be 2-form,  as
\begin{equation}
\label{eq:curv-tensor2}
R_{ijkl} = B_{ik}B_{jl} - B_{il}B_{jk}.
\end{equation}
We will also need the Simon's identity which reads as 
\begin{equation}
    \label{eq:Simon}
    \Delta B_{ij} =  \tilde \nabla_i \tilde \nabla_j  H + H B_{il} g^{ls} B_{sj} - |B|^2 B_{ij}.
\end{equation}

Let us next fix our notation for the function spaces.  We define  the Sobolev space $W^{l,p}(\Sigma)$ in  a standard way for $p \in [1,\infty]$, see e.g. \cite{AubinBook2}, denote the Hilbert space $H^l(\Sigma) = W^{l,2}(\Sigma)$ and  define the associated norm for $u \in W^{l,p}(\Sigma)$ as
\[
\| u\|_{W^{l,p}(\Sigma)}^p = \sum_{k = 0}^l \int_\Sigma |\tilde \nabla^k u|^p\, d \Ha^n
\]
and for $p = \infty$
\[
\| u\|_{W^{l,\infty}(\Sigma)} = \sum_{k = 0}^l \sup_{x \in \Sigma} |\tilde \nabla^k u|.
\]
The above definition extends naturally for tensor fields.  We adopt the convention that $\| u\|_{H^0(\Sigma) }  = \| u\|_{L^2(\Sigma)}$ and  denote $\| u\|_{C^{m}(\Sigma)} = \| u\|_{W^{m,\infty}(\Sigma)}$. We remark that we may define the $k$:th order covariant derivative of a function $u \in C^k(\Sigma)$ and the space $W^{k,p}(\Sigma)$ for $k \geq 2$ as above assuming only that $\Sigma$ (i.e. the set $E$ for which $\Sigma = \pa E$)  is $C^k$-regular.


Finally  we adopt the notation $S \star T$ from \cite{Ham, Mantegazza2002} to denote a tensor formed by contracting  some indexes of  tensors $S$ and $T$ using the coefficients of the metric tensor $g_{ij}$. This notation is useful as it implies 
\[
|S \star T| \leq C |S||T|,
\]
where the constant $C$ depends on the 'structure' of $S \star T$.

\subsection{Functional and geometric inequalities}

We will need standard interpolation inequalities on smooth hypersurfaces. Since we will apply them on the moving boundary given by the flow, we need to control the constants in the inequalities. We begin with a simple interpolation on H\"older norms. 

\begin{lemma}
\label{lem:inter-holder}
Let $\Omega \subset \R^{k}$ be an open set and let $u \in C^1(\Omega)$, then for every $\alpha \in (0,1)$
\[
\|u\|_{C^{0,\alpha}(\Omega)} \leq 3 \|u\|_{L^\infty(\Omega)}^{1-\alpha}\|u\|_{C^1(\Omega)}^{\alpha}. 
\]
\end{lemma}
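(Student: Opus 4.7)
The plan is to decompose the $C^{0,\alpha}$-norm as $\|u\|_{C^{0,\alpha}(\Omega)} = \|u\|_{L^\infty(\Omega)} + [u]_{C^{0,\alpha}(\Omega)}$, where $[u]_{C^{0,\alpha}(\Omega)} = \sup_{x\neq y \in \Omega} |u(x)-u(y)|/|x-y|^\alpha$, and bound each piece separately. Since $\|u\|_{L^\infty(\Omega)} \leq \|u\|_{C^1(\Omega)}$, the sup-norm piece is handled by the trivial interpolation
\[
\|u\|_{L^\infty(\Omega)} = \|u\|_{L^\infty(\Omega)}^{1-\alpha}\,\|u\|_{L^\infty(\Omega)}^{\alpha} \leq \|u\|_{L^\infty(\Omega)}^{1-\alpha}\,\|u\|_{C^1(\Omega)}^{\alpha},
\]
contributing the summand $1$ to the final constant.

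For the Hölder seminorm, I would introduce the critical scale $r := \|u\|_{L^\infty(\Omega)}/\|u\|_{C^1(\Omega)} \in [0,1]$ (the case $r=0$ is trivial since then $u\equiv 0$) and split the supremum into two regimes according to whether $|x-y| \leq r$ or $|x-y| > r$. In the small-scale regime $|x-y|\leq r$, one uses the mean value inequality along the segment from $x$ to $y$ to get $|u(x)-u(y)| \leq \|\nabla u\|_{L^\infty(\Omega)}|x-y| \leq \|u\|_{C^1(\Omega)}|x-y|$, so that
\[
\frac{|u(x)-u(y)|}{|x-y|^\alpha} \leq \|u\|_{C^1(\Omega)}\,|x-y|^{1-\alpha} \leq \|u\|_{C^1(\Omega)}\,r^{1-\alpha} = \|u\|_{L^\infty(\Omega)}^{1-\alpha}\|u\|_{C^1(\Omega)}^{\alpha}.
\]
In the large-scale regime $|x-y|>r$, one instead uses the crude bound $|u(x)-u(y)|\leq 2\|u\|_{L^\infty(\Omega)}$ and $|x-y|^\alpha > r^\alpha$, giving
\[
\frac{|u(x)-u(y)|}{|x-y|^\alpha} \leq \frac{2\|u\|_{L^\infty(\Omega)}}{r^\alpha} = 2\,\|u\|_{L^\infty(\Omega)}^{1-\alpha}\|u\|_{C^1(\Omega)}^{\alpha}.
\]
Taking the supremum over both regimes yields $[u]_{C^{0,\alpha}(\Omega)} \leq 2\,\|u\|_{L^\infty(\Omega)}^{1-\alpha}\|u\|_{C^1(\Omega)}^{\alpha}$, and combining with the sup-norm estimate produces the stated factor $1+2=3$.

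The only genuine subtlety, and thus the step one must take a little care with, is the application of the mean value inequality in the small-scale regime: it implicitly requires that the segment $[x,y]$ lie in $\Omega$. For a general open set this is not automatic, but in the regime $|x-y|\leq r$ one can either restrict to pairs with segment in $\Omega$ (which is the effective content of the Hölder seminorm as it is used in the sequel, where $\Omega$ will be a coordinate ball or a convex chart on the evolving boundary), or, if one insists on full generality, treat pairs of points whose segment exits $\Omega$ via the large-scale bound by interpolating through nearby points within $\Omega$. In the applications later in the paper the domain $\Omega$ is a Euclidean ball, so convexity is free and the argument above goes through verbatim.
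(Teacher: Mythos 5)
Your proof is correct and is essentially the same interpolation as the paper's, which simply writes $\frac{|u(y)-u(x)|}{|y-x|^\alpha} = |u(y)-u(x)|^{1-\alpha}\big(\tfrac{|u(y)-u(x)|}{|y-x|}\big)^\alpha \le (2\|u\|_{L^\infty(\Omega)})^{1-\alpha}\|u\|_{C^1(\Omega)}^\alpha$ in one line rather than splitting the seminorm at the critical scale $r$. The convexity caveat you raise applies verbatim to the paper's argument as well (the bound $|u(y)-u(x)|\le \|u\|_{C^1(\Omega)}|y-x|$ also needs the segment, or a comparable path, to lie in $\Omega$), and it is indeed harmless since the lemma is only invoked on balls.
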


\begin{proof}

The inequality follows from 
\[
 \frac{|u(y) - u(x)|}{|y-x|^\alpha} \leq  |u(y) - u(x)|^{1-\alpha}  \left(\frac{|u(y) - u(x)|}{|y-x|}\right)^{\alpha} \leq 2 \|u\|_{L^\infty(\Omega)}^{1-\alpha}  \|u\|_{C^1(\Omega)}^{\alpha}. 
\]
\end{proof}

We continue to  introduce  functional and geometric  inequalities that we need in order to prove the higher order regularity estimates stated at the end of Theorem \ref{thm1}. As we already mentioned  we do not need any deep results from differential geometry in order to prove the estimate for the uniform ball condition stated in the beginning of Theorem \ref{thm1}. It is only when we deal with higher order derivatives, i.e., higher than two, we need the notation of covariant derivatives. Recall that we always assume that $\Sigma = \pa E$ for a bounded set $E \subset \R^{n+1}$. 

Let us first recall the interpolation inequality with Sobolev-norms on embedded surfaces. We use the result from \cite[Proposition 6.5]{Mantegazza2002} which states that under curvature bound the standard interpolation inequality holds for a uniform constant.   
\begin{proposition}
\label{prop:interpolation}
Assume $\|B_{\Sigma}\|_{L^{\infty}}, \Ha^n(\Sigma)\leq C_0$ and $\Sigma$ is $C^{m}$-regular for $m \geq 2$.  Then for integers $0\leq k \leq l \leq m$ and numbers $p,q,r \in [1,\infty)$, there is $\theta \in [k/l,1]$ such that for every $C^l$-regular covariant tensor field $T$ on $\Sigma$ it holds
\[
\|\tilde \nabla^k T\|_{L^p(\Sigma)} \leq C \| T\|_{W^{l,q}(\Sigma)}^\theta \| T\|_{L^{r}(\Sigma)}^{1-\theta}
\]
for a constant $C=C(k,l,n,p,q,r,\theta,C_0) \in \R_+$ provided that the following compatibility condition is satisfied
\[
\frac{1}{p} = \frac{k}{n} + \theta \left( \frac{1}{q} - \frac{l}{n} \right) + \frac{1}{r}(1 - \theta).
\]
\end{proposition}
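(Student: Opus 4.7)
The plan is to follow the approach of Hamilton for Ricci flow, adapted by Mantegazza to evolving hypersurfaces: first prove the $L^2$ case directly on $\Sigma$ by iterated integration by parts, and then upgrade to general exponents via local graph charts coming from the uniform ball condition.

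For the first stage, I would establish the basic $L^2$ interpolation
\[
\|\bar\nabla T\|_{L^2(\Sigma)}^2 \leq \|T\|_{L^2(\Sigma)} \|\bar\nabla^2 T\|_{L^2(\Sigma)} + C(\Lambda)\|T\|_{L^2(\Sigma)}^2
\]
by computing $\int |\bar\nabla T|^2\,d\Ha^n = -\int \langle T, \Delta T\rangle\,d\Ha^n + \text{curvature terms}$ via integration by parts and applying Cauchy--Schwarz. The curvature correction comes from the Ricci-type identity \eqref{eq:curv-tensor}; it is of the schematic form $B\star B\star T\star T$ and is therefore controlled in $L^1$ by $\|B\|_{L^\infty}^2\|T\|_{L^2}^2$. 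Iterating this basic inequality yields the $L^2$ Gagliardo--Nirenberg estimate $\|\bar\nabla^k T\|_{L^2}\leq C\|T\|_{L^2}^{1-k/l}\|T\|_{H^l}^{k/l}$ for every $0\leq k\leq l$, with $C=C(k,l,n,\Lambda)$.

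For the second stage, I would pass to general $p,q,r$ through a covering argument. The bound $\|B\|_{L^\infty}\leq\Lambda$ guarantees a uniform ball condition on $\Sigma$ with radius $r_0=r_0(\Lambda)>0$, so every $x\in\Sigma$ admits a neighborhood in which $\Sigma$ is the graph of a $C^{1,1}$ function $f_x$ on an $n$-disk of radius $r_0$ in the tangent plane at $x$, with $\|\nabla^2 f_x\|_{L^\infty}\leq C(\Lambda)$. In such a chart the metric $g_{ij}$ is uniformly equivalent to the Euclidean one and the Christoffel symbols are bounded in $L^\infty$ by constants depending only on $\Lambda$, so covariant derivatives of $T$ of order $k$ differ from their Euclidean counterparts by terms of order $\leq k-1$ in $T$. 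Applying the classical Euclidean Gagliardo--Nirenberg inequality to the components of $T$, raising to the $p$-th power, and summing over a cover of controlled cardinality (bounded by $\Ha^n(\Sigma)/r_0^n\leq C(\Lambda)$) and bounded overlap multiplicity produces the claimed global estimate with the stated exponent relation, which is exactly the scaling condition of the Euclidean inequality.

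The main obstacle is to keep the constant dependent only on $\|B\|_{L^\infty}$ and never on higher derivatives of $B$: commutators $[\bar\nabla^i,\bar\nabla^j]$ formally introduce derivatives of the Riemann tensor, and the Christoffel symbol corrections in a chart would in principle propagate to higher-order terms as $k$ grows. Hamilton's remedy, which I would adopt, is to arrange every integration by parts so that curvature appears as a single undifferentiated factor of $B$ contracted against $T$; any stray $\bar\nabla B$ is integrated by parts once more to put the derivative back on $T$, at the cost of another $B\star T$ that is absorbed via $\|B\|_{L^\infty}\leq\Lambda$. This keeps the whole induction closed inside $\|B\|_{L^\infty}$ and $\Ha^n(\Sigma)$, yielding the constant $C(k,l,n,p,q,r,\theta,\Lambda)$ claimed in the statement.
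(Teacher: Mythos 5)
The paper does not prove this proposition at all: it is quoted verbatim from \cite[Proposition 6.5]{Mantegazza2002}, so any comparison is really with Mantegazza's proof, which runs entirely intrinsically: Hamilton-type integration by parts for tensors (moving one derivative at a time, so that no commutators and hence no derivatives of $B_\Sigma$ ever appear) combined with the Michael--Simon Sobolev inequality, whose constant depends only on $n$ and on $H_\Sigma$, to reach general exponents. Your Stage 1 is essentially the first half of that argument and is fine; note that $\int_\Sigma |\bar\nabla^j T|^2 = -\int_\Sigma \langle \bar\nabla^{j-1}T, \bar\nabla^*\bar\nabla^j T\rangle$ needs no curvature correction at all, so the log-convexity $\|\bar\nabla^j T\|_{L^2}^2 \le \|\bar\nabla^{j-1}T\|_{L^2}\|\bar\nabla^{j+1}T\|_{L^2}$ and the $L^2$ case follow cleanly.

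Stage 2 has two genuine gaps. First, $\|B_\Sigma\|_{L^\infty}\le\Lambda$ does \emph{not} imply a uniform ball condition with radius $r_0(\Lambda)$: two disjoint unit spheres at mutual distance $\eps$ have $|B|\equiv n$ but reach $\eps/2$. What the curvature bound gives is only \emph{local} graphicality of the connected component through each point; this is reparable, but the justification as written is wrong. Second, and fatally for the stated constant, the pointwise conversion between covariant derivatives and coordinate derivatives of the components in a chart is \emph{not} controlled by $\|B_\Sigma\|_{L^\infty}$ once the order exceeds $2$: already $\bar\nabla^2 T = \partial^2 T + \partial\Gamma\star T + \Gamma\star\partial T+\Gamma\star\Gamma\star T$, and $\partial\Gamma$ involves $\nabla^3 f\sim \bar\nabla B_\Sigma$. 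This is exactly the phenomenon recorded in the paper's own Lemma \ref{lem:cova-eucl}, where factors $|\bar\nabla^{\alpha_i}B_E|$ are unavoidable. The remedy you invoke --- rearranging integrations by parts so that $B$ appears undifferentiated --- is an \emph{integral} device and cannot be applied to a pointwise identity between two notions of $k$-th derivative in a chart; consequently your chart-plus-covering argument produces constants depending on $\|B_\Sigma\|_{C^{k-2}}$, not on $\Lambda$. To keep the dependence on $\Lambda$ alone you must stay intrinsic for all exponents, replacing the Euclidean Gagliardo--Nirenberg input by the Michael--Simon inequality (there is also a secondary technical point in the summation over the cover: reassembling $\sum_i A_i^{p\theta}B_i^{p(1-\theta)}$ into $(\sum_i A_i^q)^{p\theta/q}(\sum_i B_i^r)^{p(1-\theta)/r}$ uses the compatibility condition and $\theta\ge k/l$, which you should at least record).
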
 

We denote an index vector by $\alpha \in \N^k$, i.e.,  $\alpha = (\alpha_1, \dots, \alpha_k)$ where  $\alpha_i \in \N$, and define its norm by
\[
|\alpha| = \sum_{i=1}^k \alpha_i.
\]
The following inequality is well-known but we prove it for the reader's convenience. 
\begin{proposition}
\label{prop:kato-ponce}
Assume $\|B_{\Sigma}\|_{L^{\infty}}, \Ha^n(\Sigma) \leq C$ and $\Sigma$ is $C^{m}$-regular for $m \geq 2$. Assume $u_1, \dots, u_l$ are $C^m$-regular functions such that $\|u_i\|_{L^\infty} \leq C$. Then for an index vector $\alpha \in \N^l$ with $|\alpha| \leq k \leq m$   and $p \in (1,\infty)$ it holds 
\[
\| |\tilde \nabla^{\alpha_1} u_1 | \cdots  | \tilde \nabla^{\alpha_l} u_l | \|_{L^p(\Sigma)} \leq C_k \sum_{i = 1}^k  \| u_{i}\|_{W^{k,p}(\Sigma)}.
\]

\end{proposition}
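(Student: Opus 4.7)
The plan is to reduce the product estimate to a sum via the classical three-step pattern: Hölder's inequality, the Gagliardo--Nirenberg type interpolation of Proposition \ref{prop:interpolation}, and a weighted Young/AM--GM step. First I would separate off the factors with $\alpha_i = 0$ and absorb them into the constant via the hypothesis $\|u_i\|_{L^\infty} \leq C$, so that it suffices to treat only the indices with $\alpha_i \geq 1$, still subject to $\sum_i \alpha_i \leq k$.

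For the remaining factors the choice of Hölder exponents is $p_i = pk/\alpha_i \geq p$, dictated by the interpolation step. With $r = \infty$ and $\theta_i = \alpha_i/k$ (the lower endpoint of the admissible range $[\alpha_i/k,1]$), the compatibility condition of Proposition \ref{prop:interpolation} reads
\[
\frac{1}{p_i} = \frac{\alpha_i}{n} + \frac{\alpha_i}{k}\Bigl(\frac{1}{p} - \frac{k}{n}\Bigr),
\]
which is satisfied by construction. Applying the proposition and using $\|u_i\|_{L^\infty} \leq C$ then yields
\[
\|\bar\nabla^{\alpha_i} u_i\|_{L^{p_i}(\Sigma)} \leq C \|u_i\|_{W^{k,p}(\Sigma)}^{\alpha_i/k} \|u_i\|_{L^\infty(\Sigma)}^{1 - \alpha_i/k} \leq C \|u_i\|_{W^{k,p}(\Sigma)}^{\alpha_i/k}.
\]
Since $\sum_i 1/p_i = |\alpha|/(pk) \leq 1/p$, Hölder's inequality then gives the product in $L^{pk/|\alpha|}(\Sigma)$, and the finite area bound $\Ha^n(\Sigma) \leq C$ allows us to pass to $L^p(\Sigma)$ when $|\alpha| < k$, producing $\bigl\|\prod_i |\bar\nabla^{\alpha_i} u_i|\bigr\|_{L^p(\Sigma)} \leq C \prod_i \|u_i\|_{W^{k,p}}^{\alpha_i/k}$.

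The last step is to convert the product into a sum. Since the exponents $\theta_i = \alpha_i/k$ satisfy $\sum_i \theta_i \leq 1$, the weighted Young inequality $\prod_i y_i^{\theta_i} \leq \sum_i \theta_i y_i + (1 - \sum_i \theta_i)$ (padding the missing weight with the constant $1$) yields the desired sum form, with the residual constant absorbed into $C_k$. The main obstacle is essentially bookkeeping: verifying that the single exponent choice $p_i = pk/\alpha_i$ together with $\theta_i = \alpha_i/k$ simultaneously satisfies the compatibility condition (independently of $n$) and lies in the admissible range of Proposition \ref{prop:interpolation}, and handling the slack case $|\alpha| < k$ uniformly by using the bound on $\Ha^n(\Sigma)$ together with the $L^\infty$ hypothesis. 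Once the exponents are fixed, the three steps are standard.
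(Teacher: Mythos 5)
Your proposal is correct and follows essentially the same route as the paper: Hölder with exponents $p_i = pk/\alpha_i$, the interpolation of Proposition \ref{prop:interpolation} with $\theta_i = \alpha_i/k$ combined with the $L^\infty$ bound, and a weighted Young inequality to pass from the product to the sum. The only cosmetic difference is that the paper reduces to $|\alpha| = k$ at the outset while you handle the slack case via the finite area of $\Sigma$; both are fine.
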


\begin{proof}
Without loss of generality we may assume that $|\alpha| = k$. We first use H\"older's inequality
\[
\| |\tilde \nabla^{\alpha_1} u_1 | \cdots  | \tilde \nabla^{\alpha_l} u_l | \|_{L^p(\Sigma)}  \leq \|\tilde \nabla^{\alpha_1} u_1\|_{L^{\frac{pk}{\alpha_1}}} \cdots  \|\tilde \nabla^{\alpha_l} u_l\|_{L^{\frac{pk}{\alpha_l}}}.
\] 
By the interpolation inequality in Proposition \ref{prop:interpolation} and by $\|u_i\|_{L^\infty} \leq C$ it holds 
\[
\|\tilde \nabla^{\alpha_i} u_1\|_{L^{\frac{pk}{\alpha_i}}} \leq C \| u_i\|_{W^{k,p}}^{\frac{\alpha_i}{k}} \| u_i\|_{L^{\infty}}^{1-\frac{\alpha_i}{k}} \leq C \| u_i\|_{W^{k,p}}^{\frac{\alpha_i}{k}} .
\]
Hence we have 
\[
\| |\tilde \nabla^{\alpha_1} u_1 | \cdots  | \tilde \nabla^{\alpha_l} u_l | \|_{L^p(\Sigma)}  \leq   C_k \| u_1\|_{W^{k,p}}^{\frac{\alpha_1}{k}} \cdots  \| u_l\|_{W^{k,p}}^{\frac{\alpha_l}{k}}.
\]
Since $\alpha_1 + \dots + \alpha_l = |\alpha| = k$ the claim follows from Young's inequality. 
\end{proof}

If $u : \R^{n+1} \to \R$ is  a regular function then its restriction on $\Sigma$ is also regular. In the next lemma we bound the covariant derivatives of $u$ on $\Sigma$ with the Euclidian ones.  The statement of the lemma is not optimal but it is sharp enough for our purpose.   In the proof   we will repeatedly use the fact that the $k$:th order derivative of the composition $f \circ h$ and the product $f \cdot g$ of  functions $f,g : \R^m \to \R^k$  and $h : \R^n \to \R^m$ can be written as 
\beq\label{eq:leibniz}
\begin{split}
\nabla^k (f \circ h) &= \sum_{|\alpha|\leq k-1} \nabla^{1+ \alpha_1} h \star \cdots \star  \nabla^{1+ \alpha_k} h \star  \nabla^{1+ \alpha_{k+1}} f\\
\nabla^k (f \cdot  g) &= \sum_{i+j = k } \nabla^{i} f \star   \nabla^{j} g.
\end{split}
\eeq  
\begin{lemma}
\label{lem:cova-eucl}
Assume $\Sigma $ is $C^{k+2}$-regular and $u \in  C^{k+1}(\R^{n+1})$. Then it  holds for all  $x \in \Sigma$
\[
|\tilde \nabla^{k+1} \,  u(x)| \leq C_k \sum_{|\alpha|\leq k} \big(1+|\tilde \nabla^{\alpha_1} B_{E}(x) |   \cdots |\tilde \nabla^{\alpha_{k}} B_{E}(x) |\big)  \, |\nabla^{1+\alpha_{k+1}} u(x) |. 
\] 
Recall that  $\tilde \nabla^k $ denotes the $k$:th order covariant derivative on $\Sigma$ while $\nabla^k $ is the $k$:th order Euclidian derivative. 
\end{lemma}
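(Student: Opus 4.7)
My plan is induction on $k$. The base case $k=0$ is immediate from $\bar\nabla u = \nabla_\tau u = \nabla u - (\nabla u \cdot \nu_E)\nu_E$, giving $|\bar\nabla u(x)| \leq |\nabla u(x)|$; this matches the lemma with $\alpha_1 = 0$ and the empty product of $B_E$-factors read as $1$.

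For the inductive step, the central auxiliary fact I would first establish is a Gauss-type inequality: for any smooth ambient tensor field $T$ on $\R^{n+1}$ of some fixed tensorial rank $m$, its restriction to $\Sigma$ satisfies
\[
|\bar\nabla (T|_\Sigma)(x)| \leq |(\nabla T)(x)| + C_m\, |B_E(x)|\, |T(x)|.
\]
This follows from the Gauss formula $D_X Y = \bar\nabla_X Y - B_E(X,Y)\nu_E$ comparing the ambient Euclidean connection $D$ and the intrinsic Riemannian connection $\bar\nabla$: each covariant slot of $T$ contributes at most one $B_E$-correction, while for scalar $T = f$ no correction appears since $\bar\nabla f = \nabla_\tau f$. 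With this in hand, I would sharpen the inductive hypothesis into an expansion
\[
\bar\nabla^k u = \sum \bar\nabla^{\alpha_1} B_E \star \cdots \star \bar\nabla^{\alpha_{k-1}} B_E \star (\nabla^{1+\alpha_k} u)|_\Sigma,
\]
where the sum ranges over $\alpha \in \N^k$ with $|\alpha| \leq k-1$ and where some of the $B_E$-factors may be absent (absorbed by the "$1+$" in the statement). Differentiating once more with $\bar\nabla$, using Leibniz together with $\bar\nabla(\bar\nabla^{\alpha_j} B_E) = \bar\nabla^{\alpha_j+1} B_E$ and the sub-inequality applied to $T = \nabla^{1+\alpha_k} u$, each resulting term again fits the template: either $\bar\nabla$ lands on a $B_E$-factor raising $\alpha_j$ by one, or it lands on $(\nabla^{1+\alpha_k} u)|_\Sigma$ producing either $(\nabla^{2+\alpha_k} u)|_\Sigma$ (raising $\alpha_k$ by one) or a new factor $B_E \star (\nabla^{1+\alpha_k} u)|_\Sigma$. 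In every case the total index $|\alpha|$ remains $\leq k$ and the number of $B_E$-factors stays $\leq k$, as required. Passing to pointwise norms then gives the stated bound.

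The principal technical obstacle is formalizing the schematic expansion for arbitrary tensor rank, in particular tracking the number of $B_E$-corrections produced when $\bar\nabla$ is applied to $(\nabla^{1+\alpha_k} u)|_\Sigma$, whose tensorial rank $1+\alpha_k$ grows with $k$. Since $1+\alpha_k \leq 1+k$ is uniformly controlled by $k$, the rank-dependent constant $C_m$ in the sub-inequality is harmless and can be absorbed into $C_k$. The remaining combinatorial bookkeeping—verifying that each application of $\bar\nabla$ increments either $|\alpha|$ or the number of $B_E$-factors by exactly one and never produces more than $k$ factors of $B_E$ overall—is routine once the sub-inequality is in place.
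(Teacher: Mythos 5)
Your proposal is correct, but it takes a genuinely different route from the paper. The paper works in local graph coordinates around the point $x$: it writes $\Sigma$ as the graph of $f$ with $\nabla_{\R^n} f(0)=0$, expresses the metric and the Christoffel symbols explicitly as $g_{ij}=\delta_{ij}+\pa_i f\,\pa_j f$ and $\Gamma^i_{jk}=g^{il}\pa_{jk}f\,\pa_l f$, expands $\bar\nabla^{k+1}u$ via the recursive coordinate formula \eqref{eq:cova-eucl-1} and the chain/Leibniz rules \eqref{eq:leibniz}, and finally converts the resulting products of $|\nabla^{1+\alpha_i}_{\R^n}f(0)|$ into products of $|\bar\nabla^{\beta_i}B_E|$ through the identity $\nu_E\circ\Phi^{-1}=(-\nabla_{\R^n}f,1)/\sqrt{1+|\nabla_{\R^n}f|^2}$ (estimate \eqref{eq:cova-eucl}). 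You instead argue invariantly: a Gauss-formula comparison of the ambient and intrinsic connections yields the one-step inequality $|\bar\nabla(T|_\Sigma)|\leq|\nabla T|+C_m|B_E||T|$, and an induction on a schematic expansion of $\bar\nabla^k u$ in terms of $\bar\nabla^{\alpha_j}B_E$-factors and restricted ambient derivatives closes the argument. Your route avoids coordinates and the detour through derivatives of the graph function; the paper's route, while more computational, produces as a by-product the bound \eqref{eq:cova-eucl} on $|\nabla^{l+1}_{\R^n}f|$ in terms of $|\bar\nabla^{\beta}B_E|$, which is reused later (e.g.\ in Lemma \ref{lem:away-bdr}), and it also sets up the converse estimate \eqref{away-bdr-1} proved there. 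The one point in your sketch that deserves explicit care is the iteration of the Gauss-type inequality: after the first step some slots of the ambient tensor are contracted with $\nu_E$, and differentiating those contractions produces additional Weingarten corrections $D_X\nu_E=W(X)$; these are again of the form ``one extra $B_E$-factor'' and so fit your template, with the rank-dependent constants controlled by $k$, but they are not literally covered by the sub-inequality as you state it and should be folded into the inductive bookkeeping.
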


\begin{proof}
The proof follows from  basic theory of differential geometry and we merely sketch it. Let us fix  $x \in \Sigma$ and choose the coordinates such that $x = 0$ and $\nu_E(0) = e_{n+1}$. Since $\Sigma$ is $C^{k+2}$-regular hypersurface we may write it locally as a graph of $f \in C^{k+2}(\R^n)$, i.e., $\Sigma \cap B_r(0) \subset \{ (x,f(x)) : x  \in \R^n\}$. Note that since  $\nu_E(0) = e_{n+1}$ then  $\nabla_{\R^n} f(0)= 0 $.  

We consider the graph coordinates  $\Phi^{-1}: B_r^n \to \Phi^{-1}(B_r^n) \subset \Sigma$, $\Phi^{-1}(x) =  (x,f(x))$. We denote the points on $\R^n$ by $x$, the points on $\Sigma$ by $p$,    $\Phi(p) = \big(x^1(p), \dots, x^n(p)\big)$ and $U = \Phi^{-1}(B_r^n) $. Then the chart $\big(U,(x^i) \big)$ determines coordinate vector fields which we denote by $\frac{\pa}{\pa x^i}\Big {|}_{p}$ and recall that they act on smooth functions $v : U \to \R$ at  $p = \Phi(x)$ as 
\[
\frac{\pa}{\pa x^i}\Big {|}_{p} v =  \tilde \nabla u \left(  \frac{\pa}{\pa x^i}\right) (p) = \pa_i ( v \circ \Phi^{-1}) (x),
\]
where $\pa_i$ denotes the standard partial derivative in $\R^n$. It holds for the metric tensor and for the Christoffel symbol  $\Gamma_{jk}^i $ (see \cite{Lee})  for $x \in B_r^n$
\[
g_{ij}(x) = \delta_{ij} + \pa_i f(x) \pa_j f(x)\quad \text{and} \quad \Gamma_{jk}^i(x) = g^{il}(x)\,  \pa_{jk}^2 f(x) \pa_l f(x).
\]
Moreover by the recursive formula \eqref{eq:recursive} we may write the $(k+1)$:th order covariant derivative of  $u$  iteratively (see \cite[Lemma 4.8]{Lee}) as 
\begin{equation}
\begin{split}
\label{eq:cova-eucl-1}
\tilde \nabla^{k+1} u \left(\frac{\pa}{\pa x^{i_1}}, \dots, \frac{\pa}{\pa x^{i_k}}, \frac{\pa}{\pa x^j}\right) = &\pa_j \left(\tilde \nabla^{k} u \left(\frac{\pa}{\pa x^{i_1}}, \dots, \frac{\pa}{\pa x^{i_k}}\right)\right) \\
&- \sum_{m=1}^k \tilde \nabla^{k} u \left(\frac{\pa}{\pa x^{i_1}}, \dots, \frac{\pa}{\pa x^{l}}, \dots , \frac{\pa}{\pa x^{i_k}}\right)\,   \Gamma_{j i_m}^l. 
\end{split}
\end{equation}
Recall that $\tilde \nabla u \left(\frac{\pa}{\pa x^{i}}\right)(p) = \frac{\pa}{\pa x^i}\Big {|}_{p} u $.

Using \eqref{eq:leibniz}  we have  
\[
|\nabla_{\R^n}^{k+1} \,  (u \circ \Phi^{-1})(0)| \leq C_k \sum_{|\alpha|\leq k }\big(1+ |\nabla_{\R^n}^{1+\alpha_1}  f(0)|\cdots  |\nabla_{\R^n}^{1+\alpha_k}  f(0)|\big)|\nabla^{1+\alpha_{k+1}} \,  u(0)|.
\]
We use \eqref{eq:cova-eucl-1} and \eqref{eq:leibniz}, and obtain after long but straightforward calculation that 
\[
|\tilde \nabla^{k+1} \,  u(0)| \leq C_k  \sum_{|\alpha|\leq k }\big(1+ |\nabla_{\R^n}^{1+\alpha_1}  f(0)|\cdots  |\nabla_{\R^n}^{1+\alpha_k}  f(0)|\big)||\nabla^{1+\alpha_{k+1}} \,  u(0)|.
\]
Note that $\nu_{E} \circ \Phi^{-1} = \frac{(-\nabla_{\R^n} f,1)}{\sqrt{1 + |\nabla_{\R^n}f|^2}}$. We thus obtain by \eqref{eq:leibniz}  that 
\beq \label{eq:cova-eucl}
\begin{split}
 |\nabla_{\R^n}^{l+1}  f(0)| &\leq  C_l  \sum_{|\beta|\leq l }\big( 1+| \nabla^{\beta_1}(\nu_{E} \circ \Phi^{-1}) |\cdots  |\nabla^{\beta_l}  (\nu_{E} \circ \Phi^{-1})|\big) \\
&\leq C_l  \sum_{|\beta|\leq l -1}\big( 1+|\tilde \nabla^{\beta_1}  B_{E} |\cdots  |\tilde \nabla^{\beta_l}  B_{E}|\big)
\end{split}
\eeq
and the claim follows. 
\end{proof}

Next we turn our focus on geometric inequalities on compact hypersufaces. Recall that by classical  results e.g. from \cite{AubinBook2}  it holds $\|u\|_{H^2(\Sigma)}\leq C(\|\Delta u\|_{L^2(\Sigma)} + \|u\|_{L^2(\Sigma)})$ and e.g. in \cite{FJM3D} it is proven that $\|u\|_{H^{2k}(\Sigma)}\leq C(\|\Delta u\|_{H^k(\Sigma)} + \|u\|_{L^2(\Sigma)})$. We need these results  with a quantitative control on the constant. 
\begin{lemma}
\label{lem:laplace}
Assume $\Sigma$ is $C^{2k+2}$-regular and $\|B_{\Sigma}\|_{L^{\infty}}, \Ha^n(\Sigma)\leq C$. Then for all  $u \in C^{2k+1}(\Sigma)$ it holds 
\[
\|u\|_{H^{2k}(\Sigma)} \leq C_k(\|\Delta^k u\|_{L^2(\Sigma)} +(1+ \|B_\Sigma\|_{H^{2k-1}(\Sigma)})\|u\|_{L^\infty(\Sigma)})
\]
and
\[
\|u\|_{H^{2k+1}(\Sigma)} \leq C_k(\|\tilde \nabla \Delta^k u\|_{L^2(\Sigma)} +(1+\|B_\Sigma\|_{H^{2k}(\Sigma)})\|u\|_{L^\infty(\Sigma)}).
\]
\end{lemma}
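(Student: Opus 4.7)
The plan is to prove both inequalities by induction on $k$, using the commutation identity \eqref{eq:curv-tensor}: by \eqref{eq:curv-tensor2} every exchange of two covariant derivatives produces a factor of the Riemann tensor $R$ which is quadratic in $B = B_\Sigma$, and further differentiations of these commutators distribute derivatives onto $B$. This is the mechanism by which the error terms in the Bochner-type identities acquire only derivatives of $B$.

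For the base case $k = 1$ of the first inequality, the idea is to integrate the Bochner identity
\[
\tfrac{1}{2}\Delta |\bar\nabla u|^2 = |\bar\nabla^2 u|^2 + \langle \bar\nabla u, \bar\nabla \Delta u\rangle + \mathrm{Ric}(\bar\nabla u, \bar\nabla u)
\]
over $\Sigma$; since $|\mathrm{Ric}|\leq C|B|^2 \leq C$ by hypothesis, this will give $\|\bar\nabla^2 u\|_{L^2}^2 \leq \|\Delta u\|_{L^2}^2 + C\|\bar\nabla u\|_{L^2}^2$. Proposition \ref{prop:interpolation} then bounds $\|\bar\nabla u\|_{L^2}^2 \leq C\|u\|_{H^2}\|u\|_{L^\infty}$, and Young's inequality absorbs the $H^2$-term into the left-hand side. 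The $k=0$ case of the second inequality is immediate from the definition of $H^1$.

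For the inductive step the plan is to first establish the identity
\[
\int_\Sigma |\bar\nabla^{2k} u|^2\, d\Ha^n = \int_\Sigma (\Delta^k u)^2\, d\Ha^n + \mathcal{E}_k,
\]
where $\mathcal{E}_k$ is a finite sum of integrals of the form
\[
\int_\Sigma \bar\nabla^{\alpha_1} B \star \cdots \star \bar\nabla^{\alpha_j} B \star \bar\nabla^{\beta_1} u \star \bar\nabla^{\beta_2} u\, d\Ha^n,
\]
with $j\geq 2$, $\alpha_i \leq 2k-2$, $\beta_i \leq 2k-1$, and total derivative count $\sum_i\alpha_i + \beta_1 + \beta_2 \leq 4k-2$. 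This identity would be derived by repeatedly integrating by parts on $\int |\bar\nabla^{2k}u|^2$ and interspersing commutations of adjacent covariant derivatives, each of which produces an $R$-factor. The error would then be controlled by H\"older's inequality and Proposition \ref{prop:interpolation} applied separately to the $B$- and $u$-factors (with norms $\|B\|_{H^{2k-1}}$ and $\|u\|_{H^{2k}}$, $\|u\|_{L^\infty}$ respectively); the constraint on total derivatives forces the interpolation exponents to satisfy $\sum_i\theta_i + \tilde\theta_1 + \tilde\theta_2 \leq 2$ with $\tilde\theta_1+\tilde\theta_2 < 2$ strictly (since $\beta_i \leq 2k-1$). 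Young's inequality then bounds each error term by $\varepsilon \|u\|_{H^{2k}}^2 + C_\varepsilon(1+\|B\|_{H^{2k-1}})^2\|u\|_{L^\infty}^2$ for arbitrary $\varepsilon > 0$, and absorbing the $\varepsilon$-term yields the first inequality. The second inequality follows by the same argument applied to one extra gradient of $u$, with $\|B\|_{H^{2k-1}}$ replaced by $\|B\|_{H^{2k}}$.

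The hard part will be the bookkeeping in the derivation of $\mathcal{E}_k$: one must verify, at each integration-by-parts or commutation step, that no single $u$-factor ever carries more than $2k-1$ covariant derivatives and no $B$-factor more than $2k-2$. This is precisely what keeps $\tilde\theta_1 + \tilde\theta_2 < 2$ (enabling absorption into the left-hand side) and forces the final bound to depend only on $\|B\|_{H^{2k-1}}$ rather than a higher norm of $B$. Once this structural claim is in place, the interpolation estimate will follow routinely from Propositions \ref{prop:interpolation} and \ref{prop:kato-ponce}.
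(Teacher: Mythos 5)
Your proposal is correct and follows essentially the same route as the paper: reduce $\int_\Sigma|\bar\nabla^{2k}u|^2$ to $\int_\Sigma|\Delta^k u|^2$ by integration by parts combined with commutation of covariant derivatives (each commutation producing $R=B\star B$ factors), control the resulting error terms of the form $\bar\nabla^{\alpha_1}B\star\cdots\star\bar\nabla^{\beta_1}u\star\bar\nabla^{\beta_2}u$ via the interpolation inequalities of Propositions \ref{prop:interpolation} and \ref{prop:kato-ponce}, and absorb the $\|u\|_{H^{2k-1}}$-type remainder by Young's inequality. The paper organizes the reduction as an iteration (replacing $u$ by $\Delta^j u$, $j=1,\dots,k-1$) rather than a formal induction with a Bochner base case, but this is the same argument, and your bookkeeping constraints on the derivative counts match the paper's error bound $C_k(\|u\|_{H^{2k-1}}^2+\|u\|_{L^\infty}^2\|B_\Sigma\|_{H^{2k-1}}^2)$.
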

\begin{proof}
We only prove the first inequality since the second follows from the same argument. The proof is similar to \cite[Proposition 2.11]{JL} but we sketch it for the reader's convenience.  Denote $l = 2k$. We begin by noticing that we may interchange the derivatives of the $(l+1)$:th order covariant derivative of $u$ by using \eqref{eq:curv-tensor}, \eqref{eq:curv-tensor2}, \eqref{eq:cova-eucl-1} and  the curvature bound $\|B_{\Sigma}\|_{L^{\infty}}\leq C$    (see also \cite[Proof of Lemma 7.3]{Mantegazza2002}) 
\[
\begin{split}
|\tilde \nabla_{i_{l+1}} \cdots \tilde \nabla_{i_{m+1}} \tilde \nabla_{i_{m}}  \cdots  \tilde \nabla_{i_{1}} u - \tilde \nabla_{i_{l+1}}\cdots  \tilde \nabla_{i_{m}} \tilde \nabla_{i_{m+1}} \cdots  \tilde \nabla_{i_{1}} u| \leq C_l \sum_{|\alpha|\leq l-1}(1+ |\tilde \nabla^{\alpha_1} B_\Sigma| \cdots |\tilde \nabla^{\alpha_{l-1}} B_\Sigma|) |\tilde \nabla^{\alpha_{l}} u|.
\end{split}
\]
We leave the details for the reader. This holds pointwise on $\Sigma$ and we use it without further mentioning. Let us denote $F = \tilde \nabla^{2k-2} u$ and denote its components simply by $F_{\beta}$, where $\beta = (i_1, \dots, i_{2k-2})$. Then it holds by divergence theorem,  by interchanging the derivatives and by Proposition \ref{prop:kato-ponce} 
\[
\begin{split}
\int_{\Sigma} &|\tilde \nabla^{2k} u|^2 \, d \Ha^{n} = \int_{\Sigma} |\tilde \nabla^2 F|^2 \, d \Ha^{n} = \int_{\Sigma}\tilde \nabla_i \tilde \nabla_j F_\beta \tilde \nabla^i \tilde \nabla^j F^\beta \, d \Ha^{n} = - \int_{\Sigma}  \tilde \nabla_j F_\beta  \tilde \nabla_i \tilde \nabla^i \tilde \nabla^j F^\beta \, d \Ha^{n}\\
&\leq - \int_{\Sigma}  \tilde \nabla_j F_\beta \tilde \nabla^j \tilde \nabla_i\tilde \nabla^i  F^\beta \, d \Ha^{n} + C_k\sum_{|\alpha|\leq l-1} \int_{\Sigma}(1+ |\tilde \nabla^{\alpha_1} B_\Sigma|^2 \cdots |\tilde \nabla^{\alpha_{l-1}} B_\Sigma|^2) |\tilde \nabla^{\alpha_{l}} u|^2 \,  d \Ha^{n}\\
&\leq \int_{\Sigma} \tilde \nabla^j \tilde \nabla_j F_\beta \tilde \nabla_i \tilde \nabla^i  F^\beta \, d \Ha^{n} + C_k( \|u\|_{H^{l-1}(\Sigma)}^2 + \|u\|_{L^{\infty}(\Sigma)}^2\|B_\Sigma\|_{H^{l-1}(\Sigma)}^2 )\\
&=  \int_{\Sigma} |\Delta  \tilde  \nabla^{2k-2} u|^2 \, d \Ha^{n} + C_k(\|u\|_{H^{2k-1}(\Sigma)}^2 + \|u\|_{L^{\infty}(\Sigma)}^2\|B_\Sigma\|_{H^{2k-1}(\Sigma)}^2 ).
\end{split}
\]
By interchanging the derivatives and arguing as above  we obtain 
\[
 \int_{\Sigma} |\Delta   \tilde \nabla^{2k-2} u|^2 \, d \Ha^{n} \leq  \int_{\Sigma} | \tilde \nabla^{2k-2} \Delta   u|^2 \, d \Ha^{n}+ C_k( \|u\|_{H^{2k-1}(\Sigma)}^2 + \|u\|_{L^{\infty}(\Sigma)}^2\|B_\Sigma\|_{H^{2k-1}(\Sigma)}^2 ).
\]
By repeating the argument by replacing $u$ with   $\Delta^j u$, for $j = 1, \dots, k-1$,  we deduce 
\[
\int_{\Sigma} |\tilde \nabla^{2k} u|^2 \, d \Ha^{n} \leq  \int_{\Sigma} |\Delta^k u|^2 \, d \Ha^{n} + C_k( \|u\|_{H^{2k-1}(\Sigma)}^2 + \|u\|_{L^{\infty}(\Sigma)}^2\|B_\Sigma\|_{H^{2k-1}(\Sigma)}^2 ).
\]
The claim follows from interpolation inequality (Proposition \ref{prop:interpolation}) as for $\theta \in (0,1)$ it holds  
\[
 \|u\|_{H^{2k-1}(\Sigma)}^2 \leq  \|u\|_{H^{2k}(\Sigma)}^{2 \theta} \|u\|_{L^{\infty}(\Sigma)}^{2(1-\theta)} \leq \eps  \|u\|_{H^{2k}(\Sigma)}^2 + C_\eps \|u\|_{L^{\infty}(\Sigma)},
\]
where the last inequality follows from Young's inequality. 
\end{proof}

 Lemma \ref{lem:laplace} together with Simon's identity \eqref{eq:Simon} imply the following inequality.   
\begin{proposition}
\label{prop:mean-curv}
Assume $\Sigma$ is $C^{2k+3}$-regular and  $\|B_{\Sigma}\|_{L^{\infty}}, \Ha^n(\Sigma)\leq C$.  Then it holds
\[
\|B_{\Sigma}\|_{H^{2k}(\Sigma)} \leq C_k(1+ \| \Delta^k H_\Sigma \|_{L^2(\Sigma)})
\]
and
\[
\|B_{\Sigma}\|_{H^{2k+1}(\Sigma)} \leq C_k(1+ \|\tilde \nabla \Delta^k  H_\Sigma \|_{L^2(\Sigma)}).
\]
\end{proposition}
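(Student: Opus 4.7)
The plan is to argue by induction on $k$, proving both estimates in parallel. In the base case $k=0$, the $L^2$ bound is immediate from $\|B_\Sigma\|_{L^\infty},\Ha^n(\Sigma)\leq C$, while the $H^1$ bound follows by writing $\int_\Sigma|\bar\nabla B|^2 = -\int_\Sigma B_{ij}\Delta B^{ij}\,\d\Ha^n$, substituting Simon's identity \eqref{eq:Simon}, and using the Codazzi symmetry $\bar\nabla^i B_{ij}=\bar\nabla_j H$ to convert the leading contribution into $\int_\Sigma|\bar\nabla H|^2$; the cubic remainder in Simon is $O(1)$ by $\|B\|_{L^\infty}\leq C$.

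For the inductive step at order $2k$, first apply the tensorial analogue of Lemma \ref{lem:laplace} to $u=B_\Sigma$, obtaining
\[
\|B\|_{H^{2k}(\Sigma)}\leq C_k\bigl(\|\Delta^k B\|_{L^2(\Sigma)} + 1 + \|B\|_{H^{2k-1}(\Sigma)}\bigr),
\]
and absorb $\|B\|_{H^{2k-1}}$ via Proposition \ref{prop:interpolation} together with Young's inequality using the uniform $L^\infty$ bound. The task then reduces to bounding $\|\Delta^k B\|_{L^2}$. Iterating Simon's identity and the curvature commutation \eqref{eq:curv-tensor}--\eqref{eq:curv-tensor2} (each swap of two covariant derivatives producing a factor of $R = B\star B$), we obtain the schematic decomposition
\[
\Delta^k B = \bar\nabla^2\Delta^{k-1}H + E,
\]
where $E$ is a finite sum of products $\bar\nabla^{a_1}T_1\star\cdots\star\bar\nabla^{a_j}T_j$ with $T_i\in\{B,H\}$, $j\geq 3$, and $a_1+\cdots+a_j\leq 2k-2$. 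A tensorial version of Proposition \ref{prop:kato-ponce}, together with the $L^\infty$ bounds on $B$ and $H$ and the pointwise estimate $|\bar\nabla^j H|\leq C|\bar\nabla^j B|$, yields $\|E\|_{L^2}\leq C_k\|B\|_{H^{2k-1}}$. For the leading term, the Bochner-type identity $\int|\bar\nabla^2 u|^2 = \int(\Delta u)^2 + O(\|B\|_{L^\infty}^2\|\bar\nabla u\|_{L^2}^2)$ applied with $u=\Delta^{k-1}H$ gives $\|\bar\nabla^2\Delta^{k-1}H\|_{L^2}\leq \|\Delta^k H\|_{L^2} + C\|B\|_{H^{2k-1}}$, after which interpolation once more absorbs $\|B\|_{H^{2k-1}}$. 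The odd-order estimate is proved by the same strategy, starting instead from the second inequality of Lemma \ref{lem:laplace} and applying one additional covariant derivative to the above decomposition of $\Delta^k B$.

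The main obstacle will be making the schematic decomposition precise enough to guarantee that every factor in $E$ has derivative order at most $2k-2$, so that $E$ is controlled by $\|B\|_{H^{2k-1}}$ rather than by $\|B\|_{H^{2k}}$. This rests on the combined fact that Simon's identity \eqref{eq:Simon} trades two derivatives of $B$ for one second-order derivative of $H$ plus a cubic polynomial in $B$, while each commutation through \eqref{eq:curv-tensor}--\eqref{eq:curv-tensor2} trades two derivative slots for a factor $R\sim B\star B$; both operations strictly decrease the derivative budget on any individual factor by two, and a careful $k$-fold bookkeeping is what keeps $E$ at the correct Sobolev level and permits the final absorption by interpolation.
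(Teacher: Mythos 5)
Your proposal is correct and follows exactly the route the paper indicates for this proposition (which it leaves as a one-line remark): the quantitative Laplacian estimate of Lemma \ref{lem:laplace} (in its tensorial form), Simon's identity \eqref{eq:Simon} together with the commutation rules \eqref{eq:curv-tensor}--\eqref{eq:curv-tensor2} to trade $\Delta^k B$ for $\bar\nabla^2\Delta^{k-1}H$ plus lower-order $\star$-products, the Kato--Ponce-type bound of Proposition \ref{prop:kato-ponce} for the error, and interpolation (Proposition \ref{prop:interpolation}) with the uniform $\|B_\Sigma\|_{L^\infty}$ bound to absorb the $H^{2k-1}$ remainder. The only cosmetic difference is that you apply the Laplacian lemma to $B$ and then convert to $H$, whereas the paper's phrasing suggests applying it to the scalar $H$ first and then invoking Simon's identity to pass to $B$; both variants require the same tensorial elliptic estimate and yield the same result.
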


\subsection{Uniform ball condition and signed distance function}
In this subsection we recall some properties related to sets which satisfy uniform ball condition as well as properties of signed distance function defined in \eqref{sdf}. Most of them can be found e.g. in  \cite{AD, Bel} while others are more difficult to find. 
 We recall that a set $E \subset \R^{n+1}$ satisfies uniform ball condition with a radius $r \in \R_+$, 
if it simultaneously satisfies the exterior and interior ball condition with  radius $r$ at every boundary point. 
That is, for every 
$x \in \pa E$ there are balls $B_r(x_+)$ and $B_r(x_-)$  such that 
\[
B_r(x_+) \subset \R^{n+1}\setminus E, \quad B_r(x_-) \subset E \quad \text{and} \quad x \in \pa B_r(x_+) \cap \pa B_r(x_-).  
\]
It is well known, for the experts at least, that the uniform ball condition of a set implies  that its boundary is uniformly $C^{1,1}$-regular hypersurface.  We need this property in a quantitative form which states that if $E \subset \R^{n+1}$ satisfies uniform ball condition with radius $r$, then it can be written locally in a cylinder of width $r/2$ as a graph of $C^{1,1}$-function. 
Since this result is not easy to find in the literature we state it and provide a proof here. 
\begin{proposition}
\label{prop:unifball-reg}
Assume $E \subset \R^{n+1}$ satisfies uniform ball condition with radius $r>0$. Then for every point $x \in \pa E$ we may, by rotating the coordinates, write the interior of the set locally as a subgraph of a function $g: B_{r/2}^n(x') \to \R$, i.e., 
\[
\begin{split}
\mathrm{int}(E) \cap \textbf{C}(x',r/2, r) &= \{ (y',y_{n+1})\in  \textbf{C}(x',r/2, r/2) : y_{n+1} < g(y')   \} \ \ \text{and} \\
\pa E \cap \textbf{C}(x',r/2, r) &= \{ (y',g(y')) : y' \in B^n_{r/2}\}.
\end{split}
\]   
The function $g$ is $C^{1,1}$-regular and it holds  for all $y' \in B_{r/2}^n(x')$ and $s \in (0,r/2]$
\begin{gather*}
|g(y')| \leq \frac{|y'-x'|^2}{r+\sqrt{r^2-|y'-x'|^2}}, \quad |\nabla g(y')| \leq \frac{|y'-x'|}{r} \left(1- \left(\frac{|y'-x'|}{r}\right)^2\right)^{-\frac12} \quad{and} \\
 \sup_{\substack{y_1',y_2' \in B_s^n(x') \\ y_1' \neq y_2'}} \frac{|\nabla g(y'_2)- \nabla g(y'_1)|}{|y_2'-y_1'|} \leq \frac{1}{r}\left(1- \left(\frac{s}{r}\right)^2\right)^{-\frac32}.
\end{gather*}
Moreover, the outer unit normal $\nu_E$ is $1/r$-Lipschitz continuous in Euclidean metric.
\end{proposition}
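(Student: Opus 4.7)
My plan is to reduce to a normalized setting via translation and rotation, and then deduce each claim from the disjointness of the two UBC tangent balls at $x$ with tangent balls at other boundary points. Fix $x \in \partial E$ and translate to $x = 0$, then rotate so that UBC yields $B_r(r e_{n+1}) \subset E$ and $B_r(-re_{n+1}) \subset \R^{n+1} \setminus E$, both tangent at $0$, giving outer normal $-e_{n+1}$ there. Since both open balls are disjoint from $\partial E$, any $y = (y', y_{n+1}) \in \partial E$ with $|y_{n+1}| < r$ satisfies $|y'|^2 + (y_{n+1} \mp r)^2 \geq r^2$, and extracting the square root using $|y_{n+1}| < r$ yields the height bound $|y_{n+1}| \leq |y'|^2/(r+\sqrt{r^2-|y'|^2})$.

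The key technical step is the global $(1/r)$-Lipschitz bound for $\nu_E$. For any $y_1, y_2 \in \partial E$ with outer normals $\nu_1, \nu_2$, the four tangent balls $B_r(y_i \pm r \nu_i)$ split into two disjoint $(E, \R^{n+1}\setminus E)$-pairs, so
\[
|(y_1 - y_2) \pm r (\nu_1 + \nu_2)|^2 \geq 4 r^2.
\]
Adding these and using $|\nu_1 + \nu_2|^2 = 4 - |\nu_1 - \nu_2|^2$ collapses to $r^2 |\nu_1 - \nu_2|^2 \leq |y_1 - y_2|^2$.

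Next I establish the graph structure. For $y' \in B_{r/2}^n$, the vertical segment $\{(y', t) : |t| < r\}$ has its lower portion ($t < -r+\sqrt{r^2-|y'|^2}$) in $B_r(-re_{n+1}) \subset \R^{n+1}\setminus E$ and its upper portion ($t > r - \sqrt{r^2-|y'|^2}$) in $B_r(re_{n+1}) \subset E$, so it crosses $\partial E$ at some height $g(y')$ automatically satisfying the height bound. For uniqueness, suppose two heights $z_1 < z_2$ gave boundary points. The Lipschitz bound applied at $(y',z_2)$ and $0$ yields $|\nu_E(y',z_2) + e_{n+1}|^2 \leq |(y',z_2)|^2/r^2 \leq 2 - \sqrt 3$ (using the height bound to estimate $|y'|^2 + z_2^2 \leq r^2(2-\sqrt 3)$), hence $(\nu_E(y',z_2))_{n+1} \leq -\sqrt{3}/2$. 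A direct expansion shows that $(y',z_1)$ lies in the open outer tangent ball $B_r((y',z_2) + r\nu_E(y',z_2)) \subset \R^{n+1}\setminus E$ iff $(z_2 - z_1)(z_2 - z_1 + 2r(\nu_E(y',z_2))_{n+1}) < 0$, equivalently $z_2 - z_1 < 2 r |(\nu_E(y',z_2))_{n+1}|$, which holds since $z_2 - z_1 \leq r(2-\sqrt 3) < r\sqrt 3 \leq 2r |(\nu_E(y',z_2))_{n+1}|$, contradicting $(y',z_1) \in \partial E$.

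Finally, with $\nu = (\nabla g, -1)/\sqrt{1+|\nabla g|^2}$ at $y = (y', g(y'))$, disjointness of each tangent ball at $y$ with the opposite-type ball at the origin gives
\[
|y' \pm r \nu'|^2 + (g(y') \pm r\nu_{n+1} \mp r)^2 \geq 4 r^2.
\]
Adding and simplifying with $|\nu|^2 = 1$ collapses to $-\nu_{n+1} \geq 1 - (|y'|^2 + g(y')^2)/(2r^2)$, i.e.\ $\sqrt{1+|\nabla g(y')|^2} \leq (1 - (|y'|^2+g(y')^2)/(2r^2))^{-1}$. Substituting the sharp height bound $|g(y')| \leq r - \sqrt{r^2-|y'|^2}$ makes the right-hand side collapse to $r/\sqrt{r^2-|y'|^2}$ after elementary algebra, yielding $|\nabla g(y')| \leq |y'|/\sqrt{r^2-|y'|^2}$. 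The $C^{1,1}$-bound $\|g\|_{C^{1,1}(B_{r/2}^n(x'))} \leq C(r)$ then follows from the Lipschitz bound on $\nu_E$: the parametrization $y' \mapsto (y', g(y'))$ is bi-Lipschitz with constants depending only on $r$, and $\nabla g$ is a smooth Lipschitz function of $\nu_E$ (bounded away from the $e_{n+1}^\perp$ equator by the gradient bound), so the $(1/r)$-Lipschitz bound on $\nu_E$ transfers to $\nabla g$. I expect the main obstacle to be the uniqueness step, which requires combining the quantitative Lipschitz bound on $\nu_E$ with the height bound to preclude vertical accumulation of $\partial E$.
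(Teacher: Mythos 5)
Your proposal follows essentially the same route as the paper: the height bound from the two tangent balls at $x$, the $1/r$-Lipschitz bound on $\nu_E$ from the two disjoint pairs of tangent balls at $y_1,y_2$, existence of the graph by crossing the vertical segment, uniqueness by showing a second boundary point would fall into an open tangent ball, and the gradient bound from the lower bound on $|\nu_{n+1}|$ combined with the sharp height estimate. All the computations you outline check out (your uniqueness argument is the mirror image of the paper's: you place $(y',z_1)$ in the exterior tangent ball at $(y',z_2)$, whereas the paper pushes $(y',s_{y'})$ up into the exterior ball at the origin; your normal-component bound at $(y',g(y'))$ is the Lipschitz inequality specialized to the pair $(0,y)$, which is exactly the paper's \eqref{eq:unifball-reg-3}).

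The one genuine gap is that you use the identity $\nu_E = (\nabla g,-1)/\sqrt{1+|\nabla g|^2}$ before establishing that $g$ is differentiable at all, and before knowing that the vector field defined by the tangent-ball directions is the actual outer unit normal of a $C^1$ set. Both the gradient bound and the final transfer of the Lipschitz estimate from $\nu_E$ to $\nabla g$ rest on this identity, so it cannot be taken for granted. The paper closes this by sandwiching $g$ between the graphs $\psi_\pm$ of the two spheres tangent at $(y',g(y'))$: since $\psi_-\leq g\leq\psi_+$ with equality at $y'$ and $\psi_\pm$ are smooth with equal gradients there, $g$ is differentiable at $y'$ with $\nabla g(y')=\nabla\psi_\pm(y')$, and the normal of the tangent sphere then identifies $\nu_E(y',g(y'))$ with the claimed expression; continuity of $\nabla g$ then follows from continuity of $\nu_E$. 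You have all the ingredients for this (the tangent balls at each graph point), but the step must be made explicit; without it the $C^{1,1}$ assertion is derived from an unproved premise.
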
  

\begin{remark} \label{rem:C11-UCB}
We remark, that the converse of Proposition \ref{prop:unifball-reg} also holds true. That is, if $E \subset \R^{n+1}$ is a set such that for every $x \in \pa E$, we may write 
its boundary locally, by rotating and translating the coordinates, as $\pa E \cap \mathbf C(x,r,2r) \subset \{(x',g(x')): x' \in B_r^n \}$ with $\|g\|_{C^{1,1}(B_r^n)} \leq C/r$, then
$E$ satisfies  uniform ball condition with  radius $c \, r$, for a constant $c>0$ which  depends on $n$ and $C$. This is fairly straightforward to show  and we 
leave it to the reader.
\end{remark}

\begin{proof}[Proof of Proposition \ref{prop:unifball-reg}]
We remark that  the uniform ball condition implies that  for every $x \in \pa E$ there exists a unique unit vector $\nu_E(x)$
such that $B_r(x-r\nu_E(x)) \subset E$ and $B_r(x+r\nu_E(x)) \subset \R^{n+1} \setminus E$. Therefore, we have a vector field
$\nu_E : \pa E \rightarrow \pa B_1$ which later turns out to be the outer unit normal field of $E$. 
We first show that $\nu_E$ is $1/r$-Lipschitz continuous with respect to Euclidean distance. To this end, fix $x,y \in \pa E$.
By the previous remark $B_r(x + r\nu_E(x)) \subset \R^{n+1} \setminus E$ and $B_r(y-r\nu_E(x)) \subset E$ so the balls
are disjoint. Similarly, the balls $B_r(x - r\nu_E(x))$ and $B_r(y + r\nu_E(y))$ are disjoint.  Hence the distances between the corresponding centerpoints are at least $2r$ and we obtain the inequalities
\begin{align*}
4r^2 \leq |x-y + r(\nu_E(x)+\nu_E(y))|^2 \ \ \text{and} \\
4r^2 \leq |x-y - r(\nu_E(x)+\nu_E(y))|^2.
\end{align*} 
By summing the above inequalities gives us
$
8r^2 \leq  2|x-y|^2 + 4r^2\left(1+ \nu_E(x) \cdot \nu_E(y)\right)
$
and, again, by subtracting and dividing terms we further obtain
\beq
\label{eq:unifball-reg-1}
1 - \frac{|x-y|^2}{2r^2} \leq \nu_E(x) \cdot \nu_E(y) \quad \text{or equivalently} \quad |\nu_E(x) - \nu_E(y)|^2 \leq \frac{|x-y|^2}{r^2}.
\eeq
In particular, $\nu_E$ is $1/r$-Lipschitz. 

For given a point $x \in \pa E$, we show the existence of $g$ as claimed. Without loss of generality we may assume $x = 0$ and  $\nu_E(0)= e_{n+1}$. Then it holds $B_r(-re_{n+1}) \subset E$ and $B_r(re_{n+1}) \subset \R^{n+1} \setminus E$. Thus, for every $y' \in B_{r/2}^n$ there is a number $t_{y'}$ such that 
$(y',t_{y' }) \in \pa E$ and 
\beq \label{eq:unifball-reg-2}
|t_{y'}| \leq r - \sqrt{r^2 - |y'|^2} = \frac{|y'|^2}{ r + \sqrt{r^2 - |y'|^2} }.
\eeq
In particular, $|t_{y'}|<|y'|$. Combining \eqref{eq:unifball-reg-1} and \eqref{eq:unifball-reg-2} yields
\beq \label{eq:unifball-reg-3}
 \nu_E(y',t_{y'}) \cdot e_{n+1} \geq \sqrt{1 - \Big(\frac{|y'|}{r}\Big)^2}.
\eeq
Let us show that such a number $t_{y'}$ is unique. 

We suppose by contradiction there is $s_{y'} \in (-r,r) \setminus \{t_{y'}\}$ such that $(y',s_{y'}) \in \pa E$. We may assume $s_{y'} > t_{y'}$. Since $B_r\big( (y',t_{y'}) + r \nu_E(y',t_{y'}) \big)\subset \R^{n+1} \setminus E$  and $(y',s_{y'})  \in \pa E$, then the point $(y',s_{y'}) $ is not in the ball $B_r\big( (y',t_{y'}) + r \nu_E(y',t_{y'}) \big)$. Hence, we obtain 
\[
\begin{split}
r^2 &\leq |(y',s_{y'}) - \big((y',t_{y'}) +r \nu_E(y',t_{y'}) \big)|^2\\
&= (s_{y'} -t_{y'})^2 -2 r(s_{y'} -t_{y'}) \nu_E(y',t_{y'}) \cdot e_{n+1} + r^2.
\end{split}
\]
Therefore, using first the above, then \eqref{eq:unifball-reg-2}, \eqref{eq:unifball-reg-3} and $|y'|<r/2$ we deduce
\[
s_{y'} \geq t_{y'} + 2r \nu_E(x',t_{y'}) \cdot e_{n+1}  \geq -  r + 3 \sqrt{r^2 - |y'|^2} >   r -  \sqrt{r^2 - |y'|^2}.
\]
This implies together with $s_{y'}<r$ that $(y',s_{y'}) \subset B_r(re_{n+1}) \subset \R^{n+1} \setminus E$ which, in turn, contradicts $(y',s_{y'}) \in \pa E$. Thus, $t_{y'}$ is a unique value in $(-r,r)$ satisfying $(y',t_{y'}) \in \pa E$.  
Thus, the function $g: B_{r/2}^n \to \R$, given by the relation $g(y')=t_{y'}$, satisfies
\beq
\label{eq:unifball-reg-4}
\begin{split}
\mathrm{int}(E) \cap \textbf{C}(0,r/2, r/2) &= \{ (y',y_{n+1})\in  \textbf{C}(0,r/2, r/2) : y_{n+1} < g(y')   \} \ \ \text{and} \\
\pa E \cap \textbf{C}(0,r/2, r/2) &= \{ (y',g(y')) : y' \in B^n_{r/2}\}.
\end{split}
\eeq
Again, \eqref{eq:unifball-reg-2} gives us the bound on $|g(y')|$ as claimed. The condition \eqref{eq:unifball-reg-3}
implies that for every $y' \in  B_{r/2}^n(0)$ there are open sets $y' \in V \subset  B^n_r$, $(y',g(y')) \in U \subset  \textbf{C}(0,r/2, r/2)$ and functions
$\psi_+,\psi_- \in C^\infty(V)$ such that $\pa B_r((y' ,g(y' )) \pm r \nu_E(y',g(y')) \cap U$ are the graphs of $\psi_\pm$
respectively. Then $\psi_- \leq g \leq \psi_+$ in $V$ and $\psi_-(w)=g(w)=\psi_+(w)$
implying the differentiability of $g$ at $y'$ with $\nabla g (y') = \nabla \psi_\pm (y')$. Moreover, we deduce that 
$\nu_E(y',g(y'))$ is the outer unit normal of $\{ (z',z_{n+1}) \in V \times \R : z_{n+1} > \psi_+(z')\}$ at $(y',g(y'))$ and thus
\beq
\label{eq:unifball-reg-5}
\nu_E(y',g(y')) = \frac{(-\nabla \psi_+(y'),1)}{\sqrt{1+|\nabla \psi_\pm (y')|^2})} = \frac{(-\nabla g (y'),1)}{\sqrt{1+|\nabla g (y')|^2}}.
\eeq
Since now $g$ and $\nu_E$ are continuous, \eqref{eq:unifball-reg-5} implies that $\nabla g$ is continuous too.
Thus, $E$ is $C^1$-regular and $\nu_E$ is the actual outer unit  normal of $E$.
We combine  \eqref{eq:unifball-reg-3} and  \eqref{eq:unifball-reg-5} to observe
\beq
\label{eq:unifball-reg-6}
|\nabla g (y')| \leq \frac{|y'|}{r} \left(1- \left(\frac{|y'|}{r}\right)^2\right)^{-\frac12}.
\eeq

To conclude the Lipschitz estimate, we fix $s \in (0,r/2]$. If $y'_1,y'_2 \in B_s^n$, then 
the uniform ball condition implies that $(y'_1,g(y'_1)) \notin B_r((y'_2,g(y_2'))\substack{+ \\ (-)} r\nu_E(y'_2,g(y'_2))$
and $(y'_2,g(y'_2)) \notin B_r((y'_1,g(y'_1))\substack{+ \\ (-)} r\nu_E(y'_1,g(y'_1))$. Hence using \eqref{eq:unifball-reg-5} we obtain the estimates
\begin{align*}
r^2 &\leq \Big| (y'_2,g(y'_2))\substack{+ \\ (-)} r\frac{(-\nabla g (y'_2),1)}{\sqrt{1+|\nabla g (y'_2)|^2}}- (y'_1,g(y'_1))\Big|^2 \quad \text{and} \\
r^2 &\leq \Big| (y'_1,g(y'_1))\substack{+ \\ (-)} r\frac{(-\nabla g (y'_1),1)}{\sqrt{1+|\nabla g (y'_1)|^2}}- (y'_2,g(y'_2))\Big|^2.
\end{align*}
By summing these inequalities and simplifying we have 
\[
\substack{+ \\ (-)} (y'_2- y'_1) \cdot (\nabla g(y'_2)- \nabla g(y'_1)) \leq \frac{\sqrt{1+|\nabla g (y'_2)|^2}+\sqrt{1+|\nabla g (y'_1)|^2}}{2r}\left(|y'_2-y'_1|^2 + (g(y'_2)-g(y'_1))^2\right).
\]
Thus, by recalling \eqref{eq:unifball-reg-6} we further estimate
\begin{align}
\notag
|(y'_2- y'_1) \cdot (\nabla g(y'_2)- \nabla g(y'_1)) | 
&\leq  \frac{\sqrt{1+|\nabla g (y'_2)|^2}+\sqrt{1+|\nabla g (y'_1)|^2}}{2r}\left(|y'_2-y'_1|^2 + (g(y'_2)-g(y'_1))^2\right) \\
\label{eq:unifball-reg-7}
&\leq  \frac{\sqrt{1+ \sup_{B^n_s} |\nabla g|^2}}{r}\left(1 + \sup_{B^n_s} |\nabla g|^2\right)|y'_2-y'_1|^2 \\
\notag
&\leq  \frac{1}{r}\left(1- \left(\frac{s}{r}\right)^2\right)^{-\frac32}|y'_2-y'_1|^2.
\end{align}
The desired estimate then follows from \eqref{eq:unifball-reg-7} via a standard mollification argument.
\end{proof}

A signed distance function $d_E$ of non-empty set $E \subset \R^{n+1}$  is always $1$-Lipschitz.
Imposing more regularity on $E$ also improves the regularity of the signed distance function. We begin by observing that uniform ball condition is closely related to differentibility of signed distance function in a tubular neighborhood of the boundary. Indeed, one may show that for a non-empty open set $E \subset \R^{n+1}$ and $r \in \R_+$ the conditions
\begin{itemize}
\item[(i)] $d_E$ is differentiable in $\mathcal N_r(\pa E)$ and
\item[(ii)] $E$ satisfies uniform ball condition with radius $r$
\end{itemize}
 are equivalent. In such a case, the projection $\pi_{\pa E}$ onto $\pa E$ is defined in $\mathcal N_r(\pa E)$ as a continuous 
map and the following fundamental identities hold in $\mathcal N_r(\pa E)$
\beq
 \label{def:proj}
\pi_{\pa E} = \id - d_{E} \nabla d_E \ \ \text{and} \ \ \nabla d_E = \nu_E \circ \pi_{\pa E}.
\eeq
In particular, $d_E \in C^1(\mathcal N_r(\pa E))$. Further, it is fairly simple to conclude, that for every
$t \in (-r,r)$ the sublevel set $E_t = \{ x \in \R^{n+1}: d_E(x) <t\}$ has the level set $\{x \in \R^{n+1} : d_E(x)=t\}$ as the boundary and  satisfies uniform ball condition with radius $r-|t|$. Moreover, it holds
\beq
\label{levelproj}
d_{E_t} = d_E - t \quad \text{and} \quad \pi_{\pa E_t} = \pi_{\pa E} + t \nu_E \circ \pi_{\pa E} \ \ \text{in} \ \ \mathcal N_{r-|t|}(\pa E_t).
\eeq

We may then improve the regularity by showing  $\nabla d_E$ and $\pi_{\pa E}$ are locally Lipschitz continuous in $\mathcal N_r(\pa E)$ and obtain quantitative estimates for the Lipschitz constants in smaller tubes.

\begin{lemma}
\label{lemma:globalLip}
Assume $E \subset \R^{n+1}$ satisfies  uniform ball condition with radius $r>0$. 
Then for every $0<\rho<r$ and  $x,y \in \overline{\mathcal N_\rho(\pa E)}$ it holds
\[
|\pi_{\pa E}(x)-\pi_{\pa E}(y)| \leq \frac{r}{r-\rho} |x-y| \ \ \text{and} \ \ \
|\nabla d_E(x) - \nabla d_E(y)| \leq \frac{1}{r-\rho} |x-y|.
\]
\end{lemma}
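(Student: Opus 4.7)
The plan is to prove the projection estimate first and then deduce the gradient estimate from it via the identity $\nabla d_E = \nu_E \circ \pi_{\pa E}$ from \eqref{def:proj} together with the $1/r$--Lipschitz continuity of $\nu_E$ on $\pa E$ established in Proposition~\ref{prop:unifball-reg}. Given $x,y \in \overline{\mathcal N_\rho(\pa E)}$, I would set $p=\pi_{\pa E}(x)$, $q=\pi_{\pa E}(y)$, $\nu_p=\nabla d_E(x)$, $\nu_q=\nabla d_E(y)$, $s=d_E(x)$, $t=d_E(y)$, and use \eqref{def:proj} to write $x=p+s\nu_p$, $y=q+t\nu_q$ with $|s|,|t|\leq \rho$. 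A first observation is that since $q \in \pa E$ lies outside both tangent balls $B_r(p+r\nu_p) \subset \R^{n+1}\setminus E$ and $B_r(p-r\nu_p)\subset E$ guaranteed by UBC, expanding $|q-(p\pm r\nu_p)|^2 \geq r^2$ yields the two--ball inequality $|(p-q)\cdot \nu_p|\leq |p-q|^2/(2r)$, and symmetrically $|(p-q)\cdot \nu_q|\leq |p-q|^2/(2r)$.

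The crux of the argument will be to test the decomposition $x-y = (p-q)+s\nu_p-t\nu_q$ against $p-q$ itself, which produces
\[
(x-y)\cdot (p-q) = |p-q|^2 + s\,(p-q)\cdot \nu_p - t\,(p-q)\cdot \nu_q.
\]
Bounding the last two terms using the two--ball inequalities and $|s|,|t|\leq \rho$ should give $(x-y)\cdot (p-q) \geq \frac{r-\rho}{r}\,|p-q|^2$, and then Cauchy--Schwarz delivers the first claim $|\pi_{\pa E}(x) - \pi_{\pa E}(y)|\leq \frac{r}{r-\rho}\,|x-y|$. The gradient estimate is then an immediate consequence of \eqref{def:proj} combined with Proposition~\ref{prop:unifball-reg}:
\[
|\nabla d_E(x)-\nabla d_E(y)| = |\nu_E(p)-\nu_E(q)| \leq \frac{|p-q|}{r} \leq \frac{1}{r-\rho}\,|x-y|.
\]

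The delicate point is the choice of test direction for Cauchy--Schwarz. The tube $\overline{\mathcal N_\rho(\pa E)}$ need not be convex, so one cannot simply integrate the pointwise bound $|D\pi_{\pa E}|_{\op} \leq r/(r-\rho)$ along the straight segment from $x$ to $y$, which in general leaves the tube. Projecting $x-y$ onto $p-q$ sidesteps the non--convexity by exploiting the linearity of $\pi_{\pa E}$ along the normal fibres $\{p + \sigma\nu_p\}$: the $|p-q|^2$ term dominates, while the tube depth contributes only the first--order correction $\rho/r$, producing the sharp constant $r/(r-\rho)$.
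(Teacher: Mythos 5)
Your proof is correct, and it takes a genuinely different and shorter route than the paper's. The paper proves the projection bound in two stages: first a \emph{local} estimate, obtained by writing $\pa E$ as a graph near a point and expanding $|\pi_{\pa E}(y)-x|^2$ (giving the pointwise Lipschitz constant $r/(r-|d_E(x)|)$), and then a globalization step that uses the sublevel sets $E_t$, the identities \eqref{levelproj}, and finally a splitting of the segment $J_{yx}$ at the points where it exits the tube. Your argument replaces all of this with one global computation: the two--ball inequality $|(p-q)\cdot\nu_E(p)|\leq |p-q|^2/(2r)$ for $p,q\in\pa E$ (which is exactly the bound $|S_E|\leq 1/(2r)$ that the paper only establishes later, in Lemma \ref{lem:S-E}), the fibre decomposition $x-y=(p-q)+s\nu_p-t\nu_q$ from \eqref{def:proj}, and Cauchy--Schwarz against $p-q$. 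The estimates check out: $(x-y)\cdot(p-q)\geq \bigl(1-\tfrac{|s|+|t|}{2r}\bigr)|p-q|^2\geq \tfrac{r-\rho}{r}|p-q|^2$, and the gradient bound follows from $\nabla d_E=\nu_E\circ\pi_{\pa E}$ and the $1/r$--Lipschitz continuity of $\nu_E$ in Proposition \ref{prop:unifball-reg}. What your approach buys is brevity and a clean bypass of the non-convexity of the tube; what the paper's approach buys is the finer pointwise constant $r/(r-|d_E(x)|)$ in \eqref{localLip0}, though that refinement is not needed for the lemma as stated nor, as far as the later sections go, elsewhere in the paper.
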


\begin{proof} It is enough to prove  the first estimate, since the second estimate follows 
from the first  via Proposition \ref{prop:unifball-reg} and the second identity of \eqref{def:proj}. We first show that  the estimate hold locally, i.e., for every  $x \in \mathcal N_r(\pa E)$
\beq
\label{localLip0}
\mathrm{Lip}\left(\pi_{\pa E},x\right) \leq \frac{r}{r-|d_E(x)|} 
\eeq
To this end, we show that for every $x \in \pa E$ and $y \in B_{r/4}(x)$ it holds
\beq
\label{localLip1}
|\pi_{\pa E}(y) - x|^2 \leq \left(1 + \frac{4}{r-|d_E(y)|} |d_E(y)|\right) |y-x|^2.
\eeq
We may assume that $x=0$, $\nu_E(0)=e_{n+1}$ and $y \notin E$. 
Let $g : B^n_{r/2} \rightarrow \R$ be as in Proposition 
\ref{prop:unifball-reg}. Since $|y|<r/4$, then $y \in \mathbf C(r/2,r/2,0)$ implying $|d_E(y)| \leq |y_{n+1}-g_n(y')|$
and, hence, we make a technical observation
\beq
\label{localLip2}
d^2_E(y) \leq 2 d_E(y)(y_{n+1} - g(y')).
\eeq
Thus, using Proposition 
\ref{prop:unifball-reg}, \eqref{def:proj}, \eqref{localLip2} and Young's inequality we estimate
\begin{align*}
|\pi_{\pa E}(y)|^2
&=|y|^2 -2 d_E (y) \  y \cdot \nabla d_E(y) + d_E^2(y) \\
&=|y|^2 -2 d_E (y) y_{n+1} + d_E^2(y) -2 d_E (y) \  y \cdot (\nabla d_E(y)- e_{n+1}) \\
&\leq |y|^2 - 2 d_E(y) g(y') -2 d_E (y) \  y \cdot (\nu_E (\pi_{\pa E}(y))- \nu_E(0)) \\
&\leq |y|^2  + 2\frac{|d_E(y)|}{r} |y'|^2  + 2\frac{d_E(y)}{r} |y| |\pi_{\pa E}(y)| \\
&\leq  |y|^2 +2\frac{|d_E(y)|}{r} |y|^2 + \frac{|d_E(y)|}{r} |y|^2 + \frac{|d_E(y)|}{r}|\pi_{\pa E}(y)|^2
\end{align*}
and \eqref{localLip1} follows. Suppose next $y_1,y_2 \in B_\rho (x)$ for given $x \in \pa E$ and $0<\rho<r/9$.
The sublevel set $E_t$, for $t=d_E(y_2)$, satisfies  uniform ball condition with radius $r-\rho$ and $y_2 \in \pa E_t$. 
Since $|y_1 - y_2| < 2 \rho \leq (r-\rho)/4$, then by applying \eqref{localLip1} for $\pa E_t$ we have
\beq
\label{localLip3}
|\pi_{\pa E_t}(y_1)-y_2| \leq \left(1 + \frac{8\rho}{r-2\rho}\right)^\frac12 |y_1- y_2|.
\eeq
On the other hand, first recalling the second identity in \eqref{levelproj} and then applying Proposition 
\ref{prop:unifball-reg} gives us
\[
|\pi_{\pa E_t}(y_1)-y_2| = |\pi_{\pa E_t}(y_1)-\pi_{\pa E_t}(y_2)| \geq \left(1- \frac{\rho}{r}\right)|\pi_{\pa E}(y_1)-\pi_{\pa E}(y_2)|
\]
so by combining the estimate above with \eqref{localLip3} yields $\mathrm{Lip}(x, \pi_{\pa E}) = 1$. Hence, we deduce
\beq
\label{localLip4}
\mathrm{Lip}(x, \pi_{\pa E_t}) = 1
\eeq
 for every $t \in (-r,r)$ and $x \in \pa E_t$. By using \eqref{levelproj} and Proposition \ref{prop:unifball-reg} similarly as previous, we infer \eqref{localLip0} from  \eqref{localLip4}. 

Finally, for the first estimate of the claim, we may assume $x,y \in \mathcal N_\rho(\pa E)$. Let $J_{yx}:=\{tx+(1-t)y: t \in [0,1]\}$
be the line segment between them. If $J_{yx} \subset \mathcal N_\rho(\pa E)$, then the first estimate of the claim follows
from   \eqref{localLip0}. Otherwise, there are $0<t_1\leq t_2 < 1$ such that
$tx+(1-t)y \in \mathcal N_\rho(\pa E)$ for every $t \in [0,t_1) \cup (t_2,1]$ and $z_i = t_i x +(1-t_i)y \in \pa \mathcal N_{\rho}(\pa E)$ for $i=1,2$. Since $d_E(z_1)=\rho=d_E(z_2)$, then Proposition \ref{prop:unifball-reg} and \eqref{def:proj} imply
\[
|\pi_{\pa E}(z_1)-\pi_{\pa E}(z_2)| \leq   \frac{r}{r-\rho} |z_1-z_2|.
\]
On the other hand, due to \eqref{localLip0} we have
\[
|\pi_{\pa E}(x)-\pi_{\pa E}(z_1)| \leq  \frac{r}{r-\rho} |x-z_1| \quad \text{and} \quad |\pi_{\pa E}(z_2)-\pi_{\pa E}(y)| \leq  \frac{r}{r-\rho} |z_2-y|
\]
and we conclude the proof.
\end{proof}

If $E$ is $C^{k,\alpha}$-regular, with $k \geq 2$ and $0 \leq \alpha \leq 1$, then $d_E \in C^{k,\alpha}(\mathcal N_r(\pa E))$ and $\pi_{\pa E} \in C^{k-1,\alpha}(\mathcal N_r(\pa E);\R^{n+1})$. In particular, \eqref{def:proj} holds everywhere in
$\mathcal N_r(\pa E)$. Then it holds 
\beq \label{eq:2ndorder}
\nabla^2 d_E = B_E \quad \text{and} \quad \Delta d_E = H_E \ \ \text{on} \ \ \pa E.
\eeq
In particular, we deduce from Lemma \ref{lemma:globalLip} and \eqref{eq:2ndorder} that 
\beq
\label{eq:curva-bound-tri}
\| H_E\|_{L^\infty(\pa E)} \leq \frac{n}{r} \quad \text{and} \quad \sup_{\pa E} |B_E|_\op \leq \frac{1}{r}.  
\eeq
Differentiating  $\nabla d_E \cdot \nabla d_E = 1$ yields $\nabla^2 d_E \nabla d_E = 0$ in $\mathcal N_r(\pa E)$. Again, by differentiating the first identity in \eqref{def:proj} we obtain
\beq 
\label{eq:pro-1}
\nabla \pi_{\pa E} = I - \nabla d_{E} \otimes \nabla d_E  - d_{E} \nabla^2 d_E \ \ \text{in} \ \ \mathcal N_r(\pa E).
\eeq
The second identity in \eqref{def:proj} says that $\nabla d_E = \nabla d_E \circ \pi_{\pa E}$ in $\mathcal N_r(\pa E)$. Thus, by differentiating this and  by using the properties of the distance function mentioned before we have 
\beq \label{eq:dist-22}
\nabla^2 d_E  =(\nabla^2 d_E)^T = \nabla \pi_{\pa E} (\nabla^2 d_E \circ \pi_{\pa E}) = \big( I -  d_{E} \nabla^2 d_E \big) (B_E \circ \pi_{\pa E}) \ \ \text{in} \ \ \mathcal{N}_r(\pa E).
\eeq
We  write this  as 
\[
\nabla^2 d_E \big( I +  d_{E} (B_E \circ \pi_{\pa E}) \big)  =  B_E \circ \pi_{\pa E}.
\]
It follows from  \eqref{eq:curva-bound-tri} that the matrix field $ I + d_{E} ( B_E \circ \pi_{\pa E})$ is invertible in $\mathcal{N}_r(\pa E)$. Therefore, we have 
 \beq \label{eq:dist-3}
\nabla^2 d_E =(B_E \circ \pi_{\pa E}) \big( I +  d_{E}  (B_E \circ \pi_{\pa E}) \big)^{-1} \ \ \text{in} \ \ \mathcal{N}_r(\pa E).
\eeq
By combining \eqref{def:proj}, \eqref{eq:pro-1}, \eqref{eq:2ndorder} and \eqref{eq:dist-3} we may decompose $\nabla \pi_{\pa E}$  as
\beq \label{eq:pro-2}
\nabla \pi_{\pa E} = I - \nu_{E} \circ \pi_{\pa E} \otimes \nu_{E} \circ \pi_{\pa E}  - d_{E} (B_E \circ \pi_{\pa E}) \big( I +  d_{E} 
(B_E \circ \pi_{\pa E}) \big)^{-1} \ \ \text{in} \ \ \mathcal{N}_r(\pa E).
\eeq

By using a fairly standard calibration argument  (see e.g. \cite[Lemma 4.1]{AFM}) we conclude that uniform ball condition implies so called $\Lambda$-\emph{minimizer} condition.
\begin{lemma}
\label{lemma:calibration}
Assume that  $E \subset \R^{n+1}$  is an open and  bounded set which satisfies  uniform ball condition with radius $r>0$. Then
for every set of finite perimeter $F$ it holds
\[
\begin{split}
&P(E \cap F) \leq P(F) + \frac{C_n}{r}|F \setminus E| \quad  \text{and} \\
&P(E \cup F) \leq P(F) + \frac{C_n}{r}|E \setminus F|.
\end{split}
\]
In particular, $P(E) \leq \frac{C_n}{r} |E|$.
\end{lemma}
\begin{proof}
The argument is  a quantitative version of \cite[Lemma 4.1]{AFM}.  We will prove that for every set of finite perimeter $F$ it holds 
\begin{equation}
\label{lem:calibration1}
P(E) \leq P(F) + \frac{C_n}{r}|F \Delta E|. 
\end{equation}
Then the two  inequalities in the statement follow by using \eqref{lem:calibration1} with $E \cup F$ and $E \cap F$ in place of $F$ and  using the fact \cite[Lemma 12.22]{Ma}
\[
P(E\cup F) + P(E \cap F) \leq P(E) + P(F).
\] 
The third inequality follows by using  \eqref{lem:calibration1}  with $F = \emptyset$.

By standard  approximation argument  for the sets of finite perimeter \cite[Thm 13.8 ]{Ma} we may assume that $F$ is smooth. In turn, we may approximate also $E$ by a sequence of smooth sets $E_k$ in the $C^1$-sense such that $ E_k$ satisfies uniform ball condition with radius $r_k$ such that $r_k \rightarrow r$. Therefore by simplicity we assume that also $E$ is smooth.

We construct a vector-field $X \in C_0^1(\R^{n+1}; \R^{n+1})$ such that 
\begin{itemize}
\item[(i)] $X = \nu_E$ on $\pa E$,  
\item[(ii)] $|X| \leq 1$ in $\R^{n+1}$  and 
\item[(iii)] $\|\diver X\|_{L^\infty(\R^{n+1})} \leq C_n/r$.
\end{itemize} 
To this aim, let $\eta \in C^1(\R)$ be a cut-off function such that $0 \leq \eta \leq 1$,  $\eta(0) = 1$, $\eta(t) = 0$ for $|t| \geq r/2$ and $|\eta'| \leq 4/r$. 
We set $X = (\eta \circ d_E) \nabla d_E$. Then  $\spt X \subset \pa E + B_{r/2}(0)$ and
\[
\diver X = (\eta \circ  d_E) \Delta  d_E + \eta' \circ  d_E  \, \nabla d_E \otimes \nabla d_E\ \ \text{in} \ \ \mathcal N_{r/2}(\pa E).
\]
 It follows from Lemma \ref{lemma:globalLip} and $\nabla^2 d_E \nabla d_E = 0$ in $\mathcal N_r(\pa E)$  that $|\Delta d_E| \leq 2n/r$ in $\mathcal N_{r/2}(\pa E)$.
Thus $X$ satisfies the conditions (i)--(iii). 

The inequality \eqref{lem:calibration1} then follows from  divergence theorem as
\begin{align*}
P(E) - P(F) &\leq \int_{\pa E }  X \cdot \nu_E \, \d \Ha^n - \int_{\pa  F}   X \cdot \nu_F \, \d \Ha^n \\
&=  \int_{E} \diver X \, \d x   - \int_{F} \diver X \, \d x \leq  \int_{E \Delta F} |\diver X| \, \d x  \leq \frac{C_n}{r}|F \Delta E|. 
\end{align*}
\end{proof}

Suppose  that $E'$ is a connected component  of a set $E$ which satisfies uniform ball condition with $r$. We claim that then $E'$ is bounded. Indeed, by the above approximation we may 
assume that $E$ is smooth. Then by \eqref{eq:2ndorder} we have $|H_{E}| \leq n /r$ on $\pa E$.
Thus, by using so called Topping's inequality \cite{Top} as well as Lemma  \ref{lemma:calibration}
we have the estimate
 \begin{equation} \label{Topping}
\dia(E') \leq C_n \int_{\pa E'} |H_{E'}|^{n-1} \ \d \Ha^n \leq \frac{C_n}{r^n} |E'|.
\end{equation}

Finally, we need the following interpolation result.  
\begin{lemma}
\label{lemma:simpleinterpolation}
Assume $E \subset \R^{n+1}$   is an open and  bounded set which satisfies  uniform ball condition  with radius $r >0$. If $U$ is an open set containing $\pa E$ and $u \in C^2(U)$, then
\[
\|\nabla_\tau u\|^2_{L^\infty(\pa E)} \leq 4\|u\|_{L^\infty(\pa E)}\left(\sup_{\pa E} |\nabla^2 u|_{\op} + \frac{\|\nabla_\tau u\|_{L^\infty(\pa E)}}{r}\right).
\]
\end{lemma}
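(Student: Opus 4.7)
The plan is a one–dimensional Landau–Kolmogorov interpolation along a curve lying on $\partial E$. First I would reduce to the maximizer: setting $M:=\|\nabla_\tau u\|_{L^\infty(\partial E)}$ (assume $M>0$; else trivial), by compactness of $\partial E$ I can fix $x_0\in \partial E$ with $|\nabla_\tau u(x_0)|=M$ and let $v:=\nabla_\tau u(x_0)/M$ be the corresponding unit tangent vector. The uniform ball condition together with Proposition~\ref{prop:unifball-reg} furnishes a $C^{1,1}$ graph representation of $\partial E$ in a neighborhood of $x_0$, through which I construct an arc-length parametrized $C^2$ curve $\gamma\colon[-t_0,t_0]\to \partial E$ with $\gamma(0)=x_0$, $\gamma'(0)=v$ and $t_0$ a positive fraction of $r$ (for instance the $\partial E$-geodesic). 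With $\phi(t):=u(\gamma(t))$ we have $|\phi|\leq H:=\|u\|_{L^\infty(\partial E)}$, and since $\gamma'(t)$ is tangent to $\partial E$,
\[
\phi'(t)=\nabla u(\gamma(t))\cdot \gamma'(t)=\nabla_\tau u(\gamma(t))\cdot\gamma'(t),\qquad \phi'(0)=M.
\]

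The Landau–Kolmogorov trick is then standard: Taylor's theorem in integral form yields $|\phi(t)-\phi(0)-tM|\leq (t^2/2)\sup|\phi''|$, hence
\[
|t|\,M \;\leq\; 2H + \tfrac{t^2}{2}\sup_{[-t_0,t_0]}|\phi''|,
\]
and optimizing over $t\in(0,t_0]$ gives $M^2\leq 4H\sup|\phi''|$ (when the unconstrained optimizer $t^*=2\sqrt{H/\sup|\phi''|}$ falls outside $(0,t_0]$, the endpoint choice $t=t_0\sim r$ produces an additive correction that combines with the $\phi''$ bound below to yield the $M/r$ term).

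The heart of the argument, and the main obstacle, is the estimate $\sup|\phi''|\leq \sup_{\partial E}|\nabla^2 u|_{\op}+M/r$. Differentiating once more,
\[
\phi''(t)=\gamma'(t)^T\nabla^2 u(\gamma(t))\gamma'(t)+\nabla u(\gamma(t))\cdot \gamma''(t);
\]
for an arc-length geodesic $|\gamma'|=1$ bounds the first summand by $\sup_{\partial E}|\nabla^2 u|_{\op}$, and the Gauss formula $\gamma''=-B_E(\gamma',\gamma')\nu_E$ combined with $|B_E|_{\op}\leq 1/r$ from \eqref{eq:curva-bound-tri} extracts a factor $1/r$ from the second summand. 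The delicate point is that the naive bound yields $\|\nabla u\cdot \nu_E\|_{L^\infty}/r$ rather than the desired $M/r$. Upgrading this is done either by performing the computation directly in the graph coordinates of Proposition~\ref{prop:unifball-reg}, where the curvature factor appears paired with a tangential derivative of the graph function (controlled by $M$), or by invoking the first-order maximality $\bar\nabla|\nabla_\tau u|^2(x_0)=0$ together with the identity $\bar\nabla^2 u=\nabla^2 u-(\nabla u\cdot \nu_E)B_E$, which forces the normal-derivative/curvature coupling to be controlled by $M$ itself. Inserting this bound into the preceding optimization delivers $M^2\leq 4H(\sup_{\partial E}|\nabla^2 u|_{\op}+M/r)$, as required.
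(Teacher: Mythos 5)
Your strategy is the same as the paper's: take the maximizer $x_0$ of $|\nabla_\tau u|$, run a unit-speed geodesic $\gamma$ through $x_0$ in the direction $\nabla_\tau u(x_0)/M$, and apply a one-dimensional Landau--Kolmogorov inequality to $\phi=u\circ\gamma$ after bounding $\phi''$. The problem is that the step you yourself call the ``heart of the argument'' --- the bound $\sup|\phi''|\le\sup_{\pa E}|\nabla^2u|_{\op}+M/r$ --- is never actually established, and neither of the two repairs you sketch closes it. Your chain-rule computation is the correct one: for a geodesic $\gamma''=-B_E(\gamma',\gamma')\,\nu_E$, so
\[
\phi''=\gamma'\cdot\nabla^2u(\gamma)\,\gamma'-(\pa_{\nu_E}u)\,B_E(\gamma',\gamma'),
\]
and the second summand is of size $|\pa_{\nu_E}u|/r$, a purely transversal datum of the ambient $u$ that the hypotheses do not relate to $M=\|\nabla_\tau u\|_{L^\infty(\pa E)}$. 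Graph coordinates do not remove this: the same term reappears as $\pa_{n+1}u\,\pa_{ij}g$, i.e.\ again a normal derivative times a curvature. And first-order maximality of $|\nabla_\tau u|^2$ at $x_0$ only yields $\bar{\nabla}^2u(\,\cdot\,,\bar{\nabla}u(x_0))=0$ at the single point $x_0$, i.e.\ $\phi''(0)=0$, whereas the interpolation requires control of $\phi''$ on a whole interval of length comparable to $\sqrt{H/\sup|\phi''|}$; pointwise vanishing at $t=0$ is of no use. For comparison, the paper's proof of \eqref{eq:simpleinterpolation2} writes the second summand as $\gamma''\cdot(\nabla_\tau u\circ\gamma)$ and bounds it by $|B_E|_{\op}|\nabla_\tau u|$, which is exactly how the $M/r$ term is produced there; your (correct) differentiation gives $\gamma''\cdot(\nabla u\circ\gamma)$ instead, so the difficulty you flag is precisely the point where your write-up and the paper's argument part ways, and your proposal supplies no argument that actually yields the stated right-hand side.

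Two secondary issues. The paper first reduces to smooth $E$ by approximation and then uses that each compact component of $\pa E$ is geodesically complete, so $\gamma$ is defined on all of $\R$ and the clean one-dimensional inequality $\|f'\|_{L^\infty(\R)}^2\le4\|f\|_{L^\infty(\R)}\|f''\|_{L^\infty(\R)}$ gives the constant $4$; your restriction to an arc of length $t_0=cr$ with $c<1$ forces, in the endpoint case $t^*>t_0$, only $M\le 4H/(cr)$ and hence $M^2\le(4/c)HM/r$, a constant strictly worse than $4$, and your claim that the endpoint correction ``combines'' with the $\phi''$ bound is not substantiated. Moreover, on a merely $C^{1,1}$ boundary a geodesic is only $C^{1,1}$, so $\phi''$ exists only almost everywhere; the smoothing reduction (or an equivalent regularization) is needed before differentiating twice.
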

\begin{proof} By the above approximation argument we may assume that  $E$ is smooth. 

We first observe that for a bounded function $f \in C^2(\R)$ it holds 
\beq 
\label{eq:simpleinterpolation1}
\|f'\|_{L^\infty(\R)}^2 \leq 4 \|f\|_{L^\infty(\R)}\|f''\|_{L^\infty(\R)}.
\eeq
Indeed, let us fix a $t \in \R_+$. We may assume that $f'(t)> 0$, since otherwise we consider the function $-f$ instead of $f$. Let $I$ be a maximal open interval containing $t$ such that $f'>0$ in $I$ so $f$ is strictly increasing there. Then there is  a decreasing sequence $(\tilde t_i)_i \in (\inf I, t)$
 converging to $\inf I$ such that  $f'(\tilde t_i) \rightarrow 0$ as $i \rightarrow \infty$. Since $f$ is strictly increasing in $I$, it is invertible there.
Hence, we may compute for every $i \in \N$
\begin{align*}
|f'(t)|^2 - |f'(\tilde t_i)|^2  
&= \int_{\tilde t_i}^t \frac{\d}{\d s} (f'(s))^2  \d s = 2 \int_{\tilde t_i}^t f''(s) f'(s)  \d s 
= 2 \int_{\tilde t_i}^t f''(f^{-1}(f(s))) f'(s) \d s \\
&= 2 \int_{f(\tilde t_i)}^{f(t)} f''(f^{-1}(\tau)) \d \tau \leq 4 \|f\|_{L^\infty(\R)}\|f''\|_{L^\infty(\R)}
\end{align*}
and, thus, by letting $i \rightarrow \infty$ we obtain 
$|f'(t)|^2 \leq  4 \|f\|_{L^\infty(\R)}\|f''\|_{L^\infty(\R)}$
and \eqref{eq:simpleinterpolation1} follows.  

Since $\pa E$ is compact we find $x \in \pa E$ such that $|\nabla_\tau u (x)| = \|\nabla_\tau u \|_{L^\infty(\pa E)}$.
We may assume that $|\nabla_\tau u (x)| > 0$. The connected component of $\pa E$ containing $x$ is geodesically complete and, hence, we find a smooth unit speed geodesic curve $\gamma : \R \to \pa E$ satisfying $\gamma(0)=x$ and $\gamma'(0) = \nabla_\tau u (x)/|\nabla_\tau u (x)|$. Then we define a $C^2$-regular function $f = u \circ \gamma$. Note $f'(0)= \|\nabla_\tau u\|_{L^\infty(\pa E)}$ and
\beq
\label{eq:simpleinterpolation2} 
f''=  \gamma' \cdot (\nabla^2 u \circ \gamma) \gamma' + \gamma'' \cdot (\nabla_ \tau  u \circ \gamma) .
\eeq
By differentiating the identity $0 = d_E \circ \gamma$ twice and recalling the identities  \eqref{def:proj} and \eqref{eq:2ndorder} we obtain 
$0 =  \gamma' \cdot (B_E \circ \gamma) \gamma' + \gamma'' \cdot (\nu_E \circ \gamma)$. Since $\gamma$ is a geodesic curve, then $|\gamma'' \cdot (\nu_E \circ \gamma)|= |\gamma''|$ and hence we infer from the previous that $|\gamma''| \leq |B_E \circ \gamma |_\op$. By combing this with \eqref{eq:simpleinterpolation2} and using \eqref{eq:curva-bound-tri} gives us
\[
|f''| \leq \left(|\nabla^2 u \circ \gamma|_\op + |B_E \circ \gamma |_\op |\nabla_\tau u \circ \gamma| \right) 
\leq \left(\sup_{\pa E} |\nabla^2 u|_\op + \frac{\|\nabla_\tau u\|_{L^\infty(\pa E)}}{r} \right).
\]
Thus, by observing $\|f\|_{L^\infty(\R)} \leq \|u\|_{L^\infty(\pa E)}$, the claim follows from \eqref{eq:simpleinterpolation1}.
\end{proof}

\section{Definition of the flat flow and the first regularity estimates}

Let us begin by recalling the definition of the minimizing movements scheme and the flat flow solution of \eqref{eq:VMCF} from \cite{MSS}. Assume that $E_0 \subset \R^{n+1}$ is a bounded  set of finite perimeter.  For given a time step $h \in \R_+$ we construct a parametrized family $(E_t^h)_{t\geq 0}^\infty$ of sets of finite perimeter by an iterative minimizing procedure called minimizing movements, where 
\beq
\label{app flow}
\begin{split}
&E_t^h = E_0 \ \ \text{for every $0\leq t < h$ and} \\
&E^h_t = E_{h \lfloor t/h \rfloor}^h \ \text{is a minimizer of the functional $\mathcal{F}_h( \ \cdot \ , E^h_{t-h})$ for every $t \geq h$}.
\end{split}
\eeq
Here for a generic bounded set of finite perimeter $E \subset \R^{n+1}$ the functional  $\mathcal{F}_h( \ \cdot \ , E)$,  in the class of the bounded
set of finite perimeter, is defined as
\beq
\label{min mov}
\mathcal{F}_h(F, E) = P(F) + \frac{1}{h} \int_F d_{E} \, \d x  + \frac{1}{\sqrt{h}}\big| |F| - m_0\big|,
\eeq
for $m_0 = |E_0|$. We call the family $(E_t^h)_{t\geq 0}^\infty$ defined in \eqref{app flow} \emph{an approximative flat flow
solution} of  \eqref{eq:VMCF}  starting from $E_0$. We note that there is always a minimizer for \eqref{min mov} 
but it might not be unique. By \cite{MSS} we know that there is a subsequence of approximative flat flows $(E^{h_l}_t)_{t \geq 0}$ which converges to a
parametrized family $(E_t)_{t \geq  0}$ for a.e. $t$ in the $L^1$-sense, where for every $t >0$ the set $E_t$ is a set of finite perimeter with $|E_t| = |E_0|$. Any such limit is called a flat flow solution of \eqref{eq:VMCF} starting from $E_0$.   

Let us turn our focus back on a generic minimizer of \eqref{min mov}, where we assume that $|E|=m_0$. 
We then simply denote any minimizer for $\mathcal{F}_h( \ \cdot \ , E)$ by $E^h_{\min}$.
One has to be careful in the definition of the functional in \eqref{min mov}, since the sets of finite perimeter are only defined up to measure zero.  We avoid this issue we recall that, up to modifying a set of finite perimeter in a $L^1(\R^{n+1})$ -negligible
set, its topological boundary agrees with the closure of its measure theoretical boundary. Thus, we always
use the convention $\pa F = \overline{\pa^* F}$ for the initial set and the minimizers. We also remark that if $E$  is empty, then we use the convention $d_E = \infty$ everywhere to ensure that $E^h_{\min}$  is empty too. Next, we recall some basic properties regarding the minimizers. First, it is easy to conclude $P(E^h_{\min}) \leq P(E)$. Moreover, 
$E^h_{\min}$ satisfies the following distance property
\beq
\label{dist0}
\sup_{E^h_{\min} \Delta E} |d_E|  \leq \gamma_n \sqrt h
\eeq
for a dimensional constant $\gamma_n \in \R_+$, see \cite[Prop 3.2]{MSS}. Second, $E^h_{\min}$ has a generalized mean curvature satisfying 
the Euler-Lagrange equation
\beq
\label{euler}
\frac{d_E}{h} = -H_{E_{\min}^h} + \lambda^h
\eeq
 in the distributional sense \eqref{weakcurvature} on $\pa^* E^h_{\min}$,
where the Lagrange multiplier satisfies $|\lambda^h| = 1 / \sqrt h$ in the case $|E^h_{\min}| \neq m_0$, see  \cite[Lemma 3.7]{MSS}. Third, it is easy to see that $E^h_{\min}$ is always a so called $(\Lambda,r_0)$ -\emph{minimizer} with a suitable $\Lambda, r_0 \in \R_+$ satisfying $\Lambda r_0 \leq 1$ (see \cite{Ma} for the definition). Thus, by the standard regularity theory \cite[Thm 26.5 and Thm 28.1]{Ma} the reduced boundary  $\pa^* E^h_{\min}$ is relatively open in $\pa  E^h_{\min}$ and an embedded $C^{1,\alpha}$-regular hypersurface with any 
$0<\alpha <1/2$, and the Hausdorff dimension of the singular part $\pa E^h_{\min} \setminus \pa^* E^h_{\min}$ is at most $n-7$. Thus, by standard Schauder estimates one may show that $\pa^*E^h_{\min}$ is in fact $C^{2,\alpha}$-regular and \eqref{euler} holds
in the classical sense on $\pa^* E^h_{\min}$. Consequently, we may always consider $E^h_{\min}$ as an open set.  

We may improve the distance estimate \eqref{dist0} as well as regularity properties of $E^h_{\min}$, if we impose more regularity on $E$. We divide our approach into two steps. The first result states that if $E$ is  bounded and satisfies uniform ball condition with radius $r_0>0$ and $h$ is sufficiently small, then the left hand side of \eqref{dist0} is bounded linearly in $h$, the Lagrange multiplier $\lambda^h$ is bounded,  the generalized mean curvature $H_{E^h_{\min}}$ is bounded in the $L^\infty$-sense and $E^h_{\min}$ has the volume $m_0$.

\begin{proposition}
\label{prop:distance-bound}
Assume  $E \subset \R^{n+1}$ is an open and bounded set  of volume $m_0$ which satisfies   uniform ball condition with radius $r_0$. 
There are positive numbers $h_0 = h_0(n,m_0,r_0)$ and $C_0=C_0(n,m_0,r_0)$ and a positive dimensional constant $C_n$ such that
if $ h \leq h_0$, then
\[
\sup_{E^h_{\min} \Delta E} |d_E|  \leq \frac{C_n}{r_0} h, \quad \|H_{E_{\min}^h}\|_{L^\infty} + |\lambda^h| \leq C_0  \quad \text{and} \quad |E^h_{\min}| = m_0.
\]
\end{proposition}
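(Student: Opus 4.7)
My strategy combines energy comparison against tailored competitors to obtain integral bounds with $\epsilon$-regularity to promote them to pointwise ones, closing the argument by a maximum-principle analysis of the height-function PDE.

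First I would test minimality against $F=E$ itself. Since $d_E\geq 0$ outside $E$ and $d_E\leq 0$ inside, a direct calculation gives the identity
\[
\int_{E^h_{\min}} d_E\,dx - \int_{E} d_E\,dx = \int_{E^h_{\min}\triangle E}|d_E|\,dx,
\]
so that $\mathcal{F}_h(E^h_{\min},E)\leq\mathcal{F}_h(E,E)$ reduces to
\[
P(E^h_{\min}) + \frac{1}{h}\int_{E^h_{\min}\triangle E}|d_E|\,dx + \frac{1}{\sqrt h}\big||E^h_{\min}|-m_0\big| \leq P(E).
\]
This yields $P(E^h_{\min}) \leq P(E) \leq C_n m_0/r_0$ (the last bound from Lemma~\ref{lemma:calibration}) and the deviation $\big||E^h_{\min}|-m_0\big| \leq \sqrt h\, P(E)$. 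Combined with the coarse pointwise bound \eqref{dist0}, $\pa E^h_{\min}$ lies in the tubular neighborhood $\mathcal N_{\gamma_n\sqrt h}(\pa E)$, where by Proposition~\ref{prop:unifball-reg} and Lemma~\ref{lemma:globalLip} the projection $\pi_{\pa E}$ is well-defined and $d_E$ is $C^{1,1}$.

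Next, comparing $E^h_{\min}$ with $E^h_{\min}\cup B_\rho$ and $E^h_{\min}\setminus B_\rho$ for small balls shows that $E^h_{\min}$ is a $(\Lambda,r_0)$-almost minimizer of the perimeter with $\Lambda \leq C_n(1/r_0 + 1/\sqrt h)$, the second term coming from the volume penalty. Coupled with the Hausdorff closeness to $\pa E$, Allard's $\epsilon$-regularity theorem gives that $\pa^* E^h_{\min}$ is a uniform $C^{1,\alpha}$ normal graph over $\pa E$, i.e.\ $\pa^* E^h_{\min} = \{y + \phi(y)\nu_E(y) : y\in\pa E\}$ with $\phi\in C^{1,\alpha}(\pa E)$, $\|\phi\|_{L^\infty} = O(\sqrt h)$ and $\|\phi\|_{C^{1,\alpha}}$ bounded uniformly in $h$. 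Standard Schauder theory applied to the quasi-linear form of \eqref{euler} promotes $\phi$ to $C^{2,\alpha}$ with uniform estimates.

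The sharp distance bound is then extracted from the graph form of \eqref{euler}, namely $\phi(y)/h = -H_{E^h_{\min}}(y+\phi(y)\nu_E(y)) + \lambda^h$. Expanding the mean curvature of a normal graph over $\pa E$ using the Jacobi operator $\mathcal L_E = \Delta_\tau + |B_E|^2$ yields
\[
\phi - h\,\mathcal L_E\phi = h(\lambda^h - H_E) + h\,Q(\phi,\nabla_\tau\phi,\nabla_\tau^2\phi),
\]
where $Q$ is quadratic and is controlled by the $C^{1,\alpha}$-bounds of $\phi$. Evaluating at a point where $|\phi|$ is maximal (so $\nabla_\tau\phi=0$ and $\Delta_\tau\phi$ has a definite sign) and using $\|H_E\|_{L^\infty}\leq n/r_0$ from \eqref{eq:curva-bound-tri} gives
\[
\|\phi\|_{L^\infty} \leq C h/r_0 + C h |\lambda^h|,
\]
which is the desired $O(h)$ distance estimate on $\pa^* E^h_{\min}$. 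Finally, integrating \eqref{euler} over $\pa^* E^h_{\min}$ provides
\[
\lambda^h P(E^h_{\min}) = \int_{\pa^* E^h_{\min}} H_{E^h_{\min}}\,d\Ha^n + \frac{1}{h}\int_{\pa^* E^h_{\min}} d_E\,d\Ha^n,
\]
and the height and curvature bounds give $|\lambda^h|\leq C_0$; since $|\lambda^h|=1/\sqrt h$ whenever $|E^h_{\min}|\neq m_0$ by \cite[Lemma~3.7]{MSS}, the uniform bound forces $|E^h_{\min}|=m_0$ for $h$ small enough.

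\textbf{Main obstacle.} The delicate point is the circular dependence in the last step: the sharp height bound requires $|\lambda^h|$ under control, while bounding $|\lambda^h|$ uses the height and curvature bounds. The cleanest way to close the loop is a bootstrap: start from the a priori bound $|\lambda^h|\leq 1/\sqrt h$, use it together with Allard's theorem to obtain $C^{1,\alpha}$ control of $\phi$ and the improved estimate $\|\phi\|_{L^\infty}\leq C\sqrt h$, then feed this back into the integrated EL equation to sharpen $|\lambda^h|$ to a uniform constant, and only at the very end conclude the volume constraint and the sharp $O(h)$ pointwise bound.
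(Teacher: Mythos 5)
Your proposal identifies the right ingredients (the comparison with $F=E$, the integrated Euler--Lagrange identity for $\lambda^h$, and the final ``$|\lambda^h|=1/\sqrt h$ unless the volume is exact'' step), and you correctly flag the circularity as the main obstacle. However, the bootstrap you propose to break it does not close, and this is a genuine gap. Starting from $|\lambda^h|\le 1/\sqrt h$ and \eqref{dist0}, the Euler--Lagrange equation only gives $\|H_{E^h_{\min}}\|_{L^\infty}\le C h^{-1/2}$, so the smallness hypothesis of Allard's theorem, $\rho^{1/3}\bigl(\int_{B_\rho}|H|^{3n/2}\bigr)^{2/(3n)}\le\eps$, holds only at scales $\rho\lesssim\sqrt h$; Schauder then yields $\|\nabla^2\phi\|_{L^\infty}\sim h^{-1/2}$, and interpolation against $\|\phi\|_{L^\infty}\lesssim\sqrt h$ gives only $\|\nabla_\tau\phi\|_{L^\infty}=O(1)$, not $o(1)$. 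Consequently the quadratic remainder in your graph expansion contains terms like $\nabla^2\phi\star\nabla_\tau\phi\star\nabla_\tau\phi=O(h^{-1/2})$, so $h\,Q=O(\sqrt h)$ and the maximum-principle step returns $\|\phi\|_{L^\infty}\le Ch(1+|\lambda^h|)+C\sqrt h=O(\sqrt h)$ --- no improvement over \eqref{dist0}. Feeding $\|\phi\|_{L^\infty}\sim\sqrt h$ into the integrated identity likewise gives only $|\lambda^h|\lesssim h^{-1/2}$ (note also that $\int_{\pa^*E^h_{\min}}H\,\d\Ha^n$ is not $O(1)$ without a pointwise curvature bound). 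The iteration is stationary at the $\sqrt h$ level.

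The missing idea is that the sharp $O(h)$ distance bound must come first and can be obtained by pure energy comparison, with no regularity theory and no information on $\lambda^h$. Setting $d_\pm$ to be the extreme values of $d_E$ on $E^h_{\min}\Delta E$, the paper compares $E^h_{\min}$ with the volume-matched competitor $\tilde E(r_\pm,h)=(E_{r_+}\cap E^h_{\min})\cup E_{r_-}$, where $E_{r_\pm}=\{d_E<r_\pm\}$ are sublevel sets chosen so that $|E^h_{\min}\setminus E_{r_+}|=|E_{r_-}\setminus E^h_{\min}|$ (so the volume penalty is unchanged). The dissipation term $\frac1h\int d_E$ improves by at least $\frac{r_+-r_-}{h}$ times the transported volume, while the perimeter increases by at most $\frac{C_n}{r_0}$ times that volume because the sublevel sets inherit the uniform ball condition and hence the $\Lambda$-minimality of Lemma~\ref{lemma:calibration}. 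Minimality then forces $d_+-d_-\le C_n h/r_0$. Only after this does one bound $\lambda^h$ (via the divergence-theorem identity with $F=\id$, which needs the pointwise $|d_E|\le C_nh/r_0$ to control $\frac1h\int d_E(\id\cdot\nu)$), then $H$ via the Euler--Lagrange equation, and then the volume. I would encourage you to look for a competitor that exploits the structure of the dissipation term rather than trying to extract the distance bound from the PDE.
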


\begin{proof}  In the proof, $C$ denotes a generic positive constant which may change its value from the line to line
but it depends only on $n,m_0$ and $r_0$, i.e., $C=C(n,m_0,r_0)$. We fix a number $K_n \in \R_+$ depending only on the dimension such that
$K_n$ exceeds the dimensional constants in Lemma \ref{lemma:calibration} and in \eqref{dist0}. Recall that by Proposition \ref{prop:unifball-reg} $E$ is uniformly $C^{1,1}$-regular and we may assume $E$ to be an open set.
If $|E^h_{\min} \Delta E| = 0$, then it follows from the openness of $E^h_{\min}$ and $E$ as well as the
property $\pa E^h_{\min} = \overline{\pa^* E^h_{\min}}$ and  $\pa E= \overline{\pa^* E}$ that $E^h_{\min} \Delta E = \emptyset$ and there is nothing to prove. Thus, we may assume that $|E^h_{\min} \Delta E|>0$ and further set
\[
d_+ = \sup_{E^h_{\min} \Delta E} d_E \ \ \ \text{and} \ \ \ d_- = \inf_{E^h_{\min} \Delta E} d_E.
\]
To conclude the first estimate, we show that
if $\sqrt h \leq r_0 / (8 K_n)$, then
\beq
\label{prop:distance-bound1}
d_-<0<d_+ \  \ \text{and} \ \ d_+ - d_-\leq \frac{C_n}{r_0} h.
\eeq
We suppose by contradiction that $d_-\geq 0$ which implies $E \subset E^h_{\min}$ due to the openness of $E$. Since $E$ satisfies uniform ball condition
with radius $r_0$ and $E \subset E^h_{\min}$, then by Lemma \ref{lemma:calibration}
\[
P(E) \leq P(E^h_{\min}) + \frac{C_n}{r_0} |E^h_{\min} \setminus E|.
\] 
Again, $| E^h_{\min}\setminus E| = |E^h_{\min}\Delta E| >0$ so the previous estimate together with the assumption $\sqrt h \leq r_0 / (8 K_n)$ and the choice of $K_n$ implies 
\[
\mathcal{F}_h(E,E) < \mathcal{F}_h(E^h_{\min},E) +\left(\frac{C_n}{r_0} - \frac1{\sqrt h}\right)|E^h_{\min} \setminus E|
\leq \mathcal{F}_h(E^h_{\min},E)
\]
contradicting the minimality of $E^h_{\min}$.
Thus, we conclude $d_- < 0$. By using a similar argument and recalling  $\pa E= \overline{\pa^* E}$ we also have that $d_+ > 0$. 

On the other hand, $\sqrt h \leq r_0 / (8 K_n)$ implies via \eqref{dist0} that $E^h_{\min} \Delta E \subset \subset \mathcal N_{r_0/4}(\pa E)$
so $-r_0/2 < d_-<0<d_+<r_0/2$. Then for every $t \in (d_-,d_+)$ 
the sublevel set $E_t = \{ x : d_E(x) < t\}$ satisfies uniform ball condition with $r_0/2$ and
$|E^h_{\min} \setminus E_t|, |E_t \setminus E^h_{\min}|>0$. By using a suitable
continuity argument, we infer from the previous that for every $r_+ < d_+$, sufficiently close to $d_+$, there is
$r_-\in (d_-, r_+)$ such that $|E^h_{\min} \setminus E_{r_+}|=|E_{r_-} \setminus E^h_{\min}| > 0$ and
$r_- \rightarrow d_-$ as $r_+ \rightarrow d_+$.  For such a pair $(r_+,r_-)$ we set
\[
\tilde E (r_\pm,h) = (E_{r_+} \cap E^h_{\min}) \cup  E_{r_-}.
\]
Clearly, $\tilde E (r_\pm,h)$ is a bounded set of finite perimeter and
$
|\tilde E (r_\pm,h)| = |E^h_{\min}|.
$
Thus, using $\tilde E (r_\pm,h)$ as a competitor against  $E^h_{\min}$ with respect to $\mathcal F_h( \ \cdot \ , E)$ we obtain
\begin{align}
\notag
P(E^h_{\min}) 
&\leq P(\tilde E (r_\pm,h)) + \frac{1}{h}\int_{E_{r_-} \setminus E^h_{\min}} d_E \, \d x
- \frac{1}{h}\int_{E^h_{\min} \setminus E_{r_+}} d_E \, \d x \\
\label{prop:distance-bound2}
&\leq P(\tilde E (r_\pm,h))
+ \frac{r_-}{h} |E_{r_-} \setminus E^h_{\min}| -  \frac{r_+}{h}|E^h_{\min} \setminus E_{r_+}|\\
\notag
&\leq P(\tilde E (r_\pm,h))
+ \frac{r_- - r_+}{h} |E_{r_-} \setminus E^h_{\min}|.
\end{align}
Applying Lemma \ref{lemma:calibration} to $E_{r_+}$ and $E_{r_-}$ gives us
\begin{align}
 P(\tilde E (r_\pm,h)) 
\notag
&=   P((E_{r_+} \cap E^h_{\min})\cup E_{r_-}) \\ 
\notag
&\leq P(E_{r_+} \cap E^h_{\min}) + \frac{C_n}{r_0/2}|E_{r_-} \setminus E^h_{\min}| \\
\label{prop:distance-bound3}
&\leq  P( E^h_{\min}) + \frac{C_n}{r_0/2}|E^h_{\min} \setminus E_{r_+}| +  \frac{C_n}{r_0/2}|E_{r_-} \setminus E^h_{\min}| \\
\notag
& =  P( E^h_{\min}) + \frac{C_n}{r_0}|E_{r_-} \setminus E^h_{\min}| 
\end{align}
We combine \eqref{prop:distance-bound2} and \eqref{prop:distance-bound3} and recall 
$|E_{r_-} \setminus E^h_{\min}|>0$ to observe 
\[
\frac{r_+ - r_-}{h} \leq \frac{C_n}{r_0}.
\]
Thus, by letting $r_+ \rightarrow d_+$, we obtain the second estimate in \eqref{prop:distance-bound1}.

Let us next bound the Lagrange multiplier. The argument is standard but we include it for the sake of completeness. 
We assume that $\sqrt h \leq r_0 / (8 K_n)$ and fix any connected component $E^i$ of $E$. By Lemma \ref{lemma:calibration} and \eqref{Topping}  we know that $\dia(E^i) \leq  C$ and $P(E) \leq  C$.  Then we also have $P(E^h_{\min}) \leq P(E) \leq C$. 
If $E^j$ is a connected component of $E$ distinct to $E^i$, then  uniform ball condition guarantees $\dist(E^i,E^j) \geq r_0$.
On the other hand, we have $E^h_{\min} \Delta E \subset \subset \mathcal N_{r_0/4}(\pa E)$. Thus, we infer from the previous observations that for the intersection 
$\tilde E^i = E^h_{\min} \cap (E^i + B_{r_0/4})$ it holds $\pa^* \tilde E^i = \pa^* E^h_{\min} \cap (E^i + B_{r_0/4})$,
$H_{\tilde E^i} = H_{E^h_{\min}}|_{\pa^* \tilde E^i}$, $\dia(\tilde E^i) \leq C + r_0/2 \leq C$ and $|\tilde E^i|\geq |B_{r_0/2}|$.
By translating the coordinates, we may also assume $0 \in \tilde E^i$. Therefore, using the divergence theorems and the Euler-Lagrange equation \eqref{euler}, which holds in the sense of \eqref{weakcurvature} on
$\pa^* \tilde E^i$, we compute
\[
\begin{split}
\lambda^h (n+1) |\tilde E^i| =\int_{\pa^* E^i}  \lambda^h   ( \id \cdot \nu_{\tilde E^i}) \, \d \Ha^n &= \int_{\pa^* \tilde E^i} \left(H_{\tilde E^i} + \frac{d_E}{h} \right)(\id \cdot \nu_{\tilde E^i}) \, \d \Ha^n \\
&= n P(\tilde E^i) + \int_{\pa^* \tilde E^i} \frac{d_{E}}{h}  \, (\id \cdot \nu_{\tilde E^i}) \, \d \Ha^n.
\end{split}
\] 
Hence, recalling the first inequality, the bounds on $P(E^h_{\min})$ and $\dia(\tilde E^i)$ and the lower bound for $|\tilde E^i|$
we find $C_0=C_0(n,m_0,r_0)$ such that $|\lambda^h| \leq C_0$. Therefore using  the Euler-Lagrange equation \eqref{euler}
and  the first estimate again we have, by possibly increasing $C_0$, that $\|H_{E_{\min}^h}\|_{L^\infty\left(\pa^*E_{\min}^h\right)} + |\lambda^h| \leq C_0$. Finally, if $|E^h_{\min}| \neq m_0$, then $|\lambda^h| = 1 /\sqrt{h}$. Thus, assuming $h \leq (2 C_0)^{-2}$
excludes this possibility and hence it must hold $|E^h_{\min}| = m_0$. 
\end{proof}

Proposition \ref{prop:distance-bound}, allows us, via Allard's regularity theorem, to deduce 
that the singular set of minimizer is in fact empty. Further, a standard Schauder estimate gives
us a quantitative, albeit non-sharp,  uniform ball condition for a minimizer. 
 
\begin{lemma}
\label{lemma:improved-regularity}
Assume  $E \subset \R^{n+1}$ is an open and bounded set  of volume $m_0$ which satisfies   uniform ball condition with radius $r_0$. There are positive numbers $h_0 = h_0(n,m_0,r_0)$ and $c_0=c_0(n,m_0,r_0)$ such that
if $ h \leq h_0$, then $\pa E \setminus \pa E^* = \varnothing$, $E^h_{\min}$ is  $C^{3,\alpha}$-regular with any $0<\alpha < 1$ and $E^h_{\min}$ satisfies uniform ball
condition with radius $c_0 h^{1/3}$. In particular, \eqref{euler} is satisfied in the classical sense on $\pa E^h_{\min}$. 
Moreover, if in addition $E$ is  $C^k$-regular, with  $k \geq 2$, then 
$E^h_{\min}$ is  $C^{k+2}$-regular.
\end{lemma}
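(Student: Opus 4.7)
The plan is to combine Proposition \ref{prop:distance-bound} with Allard's regularity theorem and an elliptic Schauder bootstrap. Fix $h_0$, possibly smaller than the one provided by Proposition \ref{prop:distance-bound}, so that $\|H_{E^h_{\min}}\|_{L^\infty(\pa^* E^h_{\min})} \leq C_0$ and $\pa E^h_{\min} \subset \overline{\mathcal N_{C_n h/r_0}(\pa E)}$ whenever $h \leq h_0$. The first task is to apply Allard's regularity theorem to the integral varifold $(\pa^* E^h_{\min}, \Ha^n|_{\pa^* E^h_{\min}})$ at every point of $\pa E^h_{\min}$. The uniform $L^\infty$-bound on $H$ is directly given; the required flatness at every scale follows from the distance estimate together with the fact that $\pa E$ is itself a $C^{1,1}$-graph on cylinders of fixed scale $r_0/2$ by Proposition \ref{prop:unifball-reg}. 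Consequently $\pa E^h_{\min} \setminus \pa^* E^h_{\min} = \varnothing$ and $\pa E^h_{\min}$ is $C^{1,\alpha}$-regular for every $\alpha \in (0,1)$.

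Next, at each $x \in \pa E^h_{\min}$ I would set $x_0 = \pi_{\pa E}(x)$, rotate so that $\nu_E(x_0) = e_{n+1}$, and represent $\pa E^h_{\min}$ locally as the graph $y_{n+1} = u(y')$ of a $C^{1,\alpha}$-function over a disk in the tangent plane of $\pa E$ at $x_0$, with $\|u\|_{L^\infty} \leq C h/r_0$ from the distance estimate. The Euler-Lagrange equation \eqref{euler} then reads
\[
H(u)(y') = \lambda^h - \frac{1}{h}\, d_E\bigl(y',u(y')\bigr),
\]
a quasilinear elliptic equation for $u$ whose right-hand side is Lipschitz in $y'$ since $d_E \in C^{1,1}(\mathcal N_{r_0/2}(\pa E))$ by Lemma \ref{lemma:globalLip}. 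A standard Schauder bootstrap upgrades $u$ first from $C^{1,\alpha}$ to $C^{2,\alpha}$ (right-hand side in $C^{0,1}$) and then to $C^{3,\alpha}$ (right-hand side in $C^{1,\alpha}$ once $u \in C^{2,\alpha}$ is fed back), giving $C^{3,\alpha}$-regularity of $\pa E^h_{\min}$ and the classical validity of \eqref{euler}.

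For the uniform ball condition of radius $c_0 h^{1/3}$, the Schauder estimate applied to the same equation yields
\[
\|u\|_{C^{2,\alpha}} \leq C\bigl(\|u\|_{L^\infty} + \|f\|_{C^{0,\alpha}}\bigr) \leq C/h,
\]
where $f$ denotes the right-hand side and the factor $1/h$ comes from the prefactor in front of $d_E$. Interpolating this bound against $\|u\|_{L^\infty} \leq Ch/r_0$ through the H\"older interpolation $\|u\|_{C^2} \leq C\|u\|_{L^\infty}^{\alpha/(2+\alpha)}\|u\|_{C^{2,\alpha}}^{2/(2+\alpha)}$ and sending $\alpha$ arbitrarily close to $1$ produces $\|D^2 u\|_{L^\infty} \leq C h^{-1/3}$. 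Since the principal curvatures of a $C^{1,1}$-graph are controlled by $|D^2 u|$ up to lower-order factors depending on $\|Du\|_{L^\infty}$ (which is itself small by a further interpolation with $\|u\|_{L^\infty}$), Remark \ref{rem:C11-UCB} then yields the uniform ball condition for $E^h_{\min}$ with radius $c_0 h^{1/3}$.

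When $E$ is additionally $C^k$-regular for some $k \geq 2$, then $d_E \in C^k(\mathcal N_{r_0}(\pa E))$, and iterating the above Schauder bootstrap $m \mapsto m+1$ while $m \leq k$ propagates regularity two orders above that of $d_E$, giving $u \in C^{k+2}$. The main obstacle of the plan is the $h^{1/3}$-radius: the Schauder estimate by itself yields only $\|D^2 u\|_{L^\infty} \leq C/h$, which would give a uniform ball condition radius of order $h$ rather than $h^{1/3}$. The improvement is bought solely by combining Schauder with the smallness $\|u\|_{L^\infty} \leq Ch$ via H\"older interpolation, and the precise exponent $1/3$ corresponds to pushing the interpolation to $\alpha \to 1$, which in turn requires the $C^{3,\alpha}$-regularity obtained in the previous step.
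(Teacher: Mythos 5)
Your overall skeleton (Allard's theorem to kill the singular set, then a Schauder bootstrap on the graphical Euler--Lagrange equation, then an interpolation to get the quantitative $h^{1/3}$ curvature bound) is the same as the paper's. However, two steps as written have genuine gaps.

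First, the application of Allard's theorem is not justified. Allard requires, besides the $L^\infty$ (or $L^p$) bound on the generalized mean curvature, an area-ratio bound $\Ha^n(\pa^* E^h_{\min}\cap B_\rho(x)) \leq (1+\eps)\,\omega_n\rho^n$ at some scale. You assert that "the required flatness at every scale follows from the distance estimate together with the fact that $\pa E$ is a $C^{1,1}$-graph," but Hausdorff closeness of $\pa E^h_{\min}$ to $\pa E$ does not bound the area of $\pa^* E^h_{\min}$ in small balls from above: a boundary trapped in a slab of width $Ch$ can a priori have several sheets or fine oscillations, giving density ratios near $2$ and singular points. The paper spends most of its Step~1 precisely on this: it builds the competitor $F_\rho=(E^h_{\min}\setminus \mathbf C(0,\rho,r_0/2))\cup(E\cap\mathbf C(0,\rho,r_0/2))$ and uses minimality of $E^h_{\min}$ together with the distance estimate to prove
\[
P(E^h_{\min};B_\rho(x)) \leq (1+C\rho^2)|B^n_\rho| + C(\rho^n\sqrt h + \rho^n h^2 + \rho^{n-1}h),
\]
which yields the density hypothesis for $h\ll\rho\ll 1$. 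Some comparison argument of this type is indispensable and is missing from your proposal.

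Second, the derivation of $\|D^2u\|_{L^\infty}\leq Ch^{-1/3}$ does not close. With the crude bound $\|u\|_{C^{2,\alpha}}\leq C/h$, the interpolation $\|D^2u\|_{L^\infty}\leq C\|u\|_{L^\infty}^{\alpha/(2+\alpha)}\|u\|_{C^{2,\alpha}}^{2/(2+\alpha)}$ and $\|u\|_{L^\infty}\leq Ch$ give $\|D^2u\|_{L^\infty}\leq C_\alpha h^{(\alpha-2)/(2+\alpha)}$, and $(\alpha-2)/(2+\alpha)<-1/3$ strictly for every $\alpha<1$; you only recover $-1/3$ in the limit $\alpha\to 1$, where both the interior Schauder constant and the interpolation constant blow up, so "sending $\alpha$ arbitrarily close to $1$" is not a valid step and for fixed $\alpha$ you obtain a uniform ball radius $c_\alpha h^{(2-\alpha)/(2+\alpha)}$, which is smaller than $h^{1/3}$. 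The fix is to interpolate the \emph{right-hand side} rather than the solution, which is what the paper does: the datum $f=\lambda^h-d_E(\cdot,u(\cdot))/h$ (equivalently $H_{E^h_{\min}}$) satisfies $\|f\|_{L^\infty}\leq C$ by Proposition \ref{prop:distance-bound} and $\|\nabla f\|_{L^\infty}\leq C/h$ by differentiating \eqref{euler}, so Lemma \ref{lem:inter-holder} gives $\|f\|_{C^{0,1/3}}\leq Ch^{-1/3}$, and a single Schauder estimate with the fixed exponent $\alpha=1/3$ yields $\|\nabla^2 u\|_{L^\infty}\leq Ch^{-1/3}$ directly, with no limit in $\alpha$. (Inserting the sharper bound $\|f\|_{C^{0,\alpha}}\leq Ch^{-\alpha}$ into your scheme would in fact give $h^{-\alpha/(2+\alpha)}$, which is even better than needed; either way the key missing ingredient is the $L^\infty$--$C^1$ interpolation of the datum.) The remaining parts — the bootstrap to $C^{3,\alpha}$ using $d_E\in C^{1,1}$, the $C^{k+2}$ statement for $C^k$ initial data, and the passage from $\|D^2u\|_{L^\infty}\leq Ch^{-1/3}$ to the uniform ball condition via Remark \ref{rem:C11-UCB} — are in line with the paper.
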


\begin{proof} We divide the proof into two steps. Recall that we  may assume $E^h_{\min}$ to be open. In the proof, $C$ denotes a generic positive constant which may change its value from the line to line but it depends only on $n,m_0$ and $r_0$.   
\\
\\
\textbf{Step 1:} By using Allard's regularity theorem we show that
the topological boundary  $\pa E^h_{\min}$ agrees with the  reduced boundary $\pa^*E^h_{\min}$ when $h$ is sufficiently small.
To be more precise, we show that there exist positive numbers   $\rho=\rho(n,m_0,r_0)$   and $h_1 = h_1(n,m_0,r_0, \rho)$ such that
if $h \leq h_1$ and $x \in \pa E^h_{\min}$, then, by possibly rotating the coordinates, there is a function
$f \in C^{1,1/3} \left(B^n_{\rho}(x')\right)$ such that
\beq
\label{lemma:improved-regularity1}
 \mathbf C(\rho,2\rho,x) \cap E^h_{\min} = \{y \in  \mathbf C(\rho,2\rho,x)  : y_{n+1} < f(y)\}
\eeq
and $f$ satisfies the estimates
\beq
\label{lemma:improved-regularity2}
\|\nabla f\|_{L^\infty(B^n_{\rho}(x'))} \leq 1 \quad \text{and} \quad \|\nabla f\|_{C^{0,\frac13}(B^n_{\rho}(x'))} \leq C.
\eeq
In particular, \eqref{lemma:improved-regularity1} implies that $\pa^*E = \pa E$ and hence, by our earlier discussion,
we conclude that $E^h_{\min}$ is $C^{2,\alpha}$-regular with any $0<\alpha<1/2$. We may assume that $h_1$ is
chosen so small that via Proposition \ref{prop:distance-bound} the boundary $\pa E^h_{\min}$ is contained in $\mathcal N_{r_0/2}(\pa E)$. Since $d_E \in C^{1,1}(\mathcal N_{r_0/2}(\pa E))$, then recalling the Euler-Lagrange equation \eqref{euler} we may write the generalized mean curvature of $E^h_{\min}$ as a restriction of a $C^{1,1}$-function to $\pa E^h_{\min}$. Therefore, by using standard Schauder estimates,
one may show that $E^h_{\min}$ is actually $C^{3,\alpha}$-regular with any $0<\alpha<1$. Also, the same method gives
us $C^{k+2,\alpha}$-regularity for any $k \geq 2$, if $E$ is already known to be $C^{k,\alpha}$-regular. This is well-known procedure and
we leave it to the reader.  

The claim of Step 1 follows essentially from \cite[Thm 2.5.2]{Sim}, if we prove that for every $x \in \pa E^h_{\min}$ and $\eps \in \R_+$ there are positive numbers 
 $\rho=\rho(n,m_0,r_0,\eps)$   and  $\tilde h = \tilde h(n,m_0,r_0,\rho,\eps)$  such that if $h \leq \tilde h$, then
\begin{align}
\label{lemma:improved-regularity3}
\frac{\Ha^n(B_{\rho}(x) \cap \pa^* E^h_{\min})}{|B^n_{\rho}|} &\leq 1+ \eps \ \  \text{and}  \\
\label{lemma:improved-regularity4}
{\rho}^{\frac13}\left( \int_{B_{\rho}(x) \cap \pa^* E^h_{\min}} |H_{E^h_{\min}}|^{\frac{3n}{2}} \ \d \Ha^n \right)^{\frac{2}{3n}}
&\leq \eps.
\end{align}

We fix $\eps >0$ and initially assume $h \leq h_0$, where $h_0$ is from Proposition \ref{prop:distance-bound}. 
It follows from  Proposition \ref{prop:distance-bound} and the fact $\pa E^h_{\min}=\overline{\pa^* E^h_{\min}}$ that
\beq
\label{lemma:improved-regularity5}
(\overline {E^h_{\min}} \cup \overline E) \setminus (E^h_{\min} \cap E) \subset \mathcal N_{Ch}(\pa E).
\eeq
Thus, we may assume that $(\overline {E^h_{\min}} \cup \overline E) \setminus (E^h_{\min} \cap E) \subset \mathcal N_{r_0/2}(\pa E)$ and thus the projection $\pi_{\pa E}$ is well-defined there. Proposition  \ref{prop:distance-bound} also gives us $|E^h_{\min}|=m_0$.
Next, we fix $x \in \pa E^h_{\min}$. Without loss of generality, we may assume $\pi_{\pa E}(x)=0$ and $\nu_E(0)=e_{n+1}$. 
Then it follows from Proposition \ref{prop:unifball-reg} that there is $g \in C^{1,1}(B^n_{r_0/2})$ such that 
$|g(y')| < |y'|^2/r_0$,  $|\nabla g(y')|< 2|y'|/r_0$
 for every $y' \in B^n_{r_0/2}$ and
\[
\mathbf C (0,r_0/2,r_0/2) \cap E = \{ y \in \mathbf C (r_0/2,r_0/2,0) : y_{n+1} < g(y')\}.
\]
Then for $0<\rho<r_0/4$ we have the density bound
\beq
\label{lemma:improved-regularity6}
P(E; \mathbf C (0,\rho ,r_0/2)) = \int_{B^n_\rho} \sqrt{1+|\nabla g|^2} \, \d y' \leq (1+C\rho^2) |B^n_\rho|.
\eeq

Suppose that $y \in \mathbf C (0,\rho ,r_0/2) \cap ((\overline {E^h_{\min}} \cup \overline E) \setminus (E^h_{\min} \cap E))$ for 
$0<\rho<r_0/4$. Recalling \eqref{lemma:improved-regularity5}, we may assume that $\pi_{\pa E} (y) \in \mathbf C (0, r_0/2,r_0/2)$ and since $|\nabla g| \leq C$ in $B^n_{r_0/2}$ 
we estimate
\begin{align*}
|y_{n+1} - g(y')| 
&\leq |y-\pi_{\pa E} (y)| + |\pi_{\pa E} (y)-(y',g(y'))| \\
&\leq |y-\pi_{\pa E} (y)| + C |(\pi_{\pa E} (y))'-y'| \leq Ch.
\end{align*}
It follows then from Fubini's theorem 
\begin{align}
\label{lemma:improved-regularity7}
\left|\mathbf C (0,\rho ,r_0/2) \cap \left(\overline {E^h_{\min}} \cup \overline E \setminus (E^h_{\min} \cap E)\right)\right| &\leq C \rho^n  h \ \ \text{and} \\
\label{lemma:improved-regularity8}
\Ha^n\left(\pa \mathbf C (0,\rho ,r_0/2) \cap \left(\overline {E^h_{\min}} \cup \overline E \setminus (E^h_{\min} \cap E)\right)\right) &\leq C \rho^{n-1} h
\end{align}
for $0<\rho<r_0/4$. We define for such $\rho$ a comparison set $F_\rho$ by setting
\[
F_\rho = (E^h_{\min} \setminus  \mathbf C (0,\rho,r_0/2)) \cup (E \cap  \mathbf C (0,\rho,r_0/2))
\]
and make the following technical observations.
First, since $E^h_{\min} \cap E$ is open and contained in $F_\rho$, then $\Ha^n(\pa^* F_\rho \cap (E^h_{\min} \cap E))=0$.
Second, $\pa^* F_\rho \subset \overline {E^h_{\min}} \cup \overline E$.
With help of these, \eqref{lemma:improved-regularity6} and  \eqref{lemma:improved-regularity8} we estimate   
\begin{align*}
P(F_\rho )
&= P(F_\rho;\mathbf C (0,\rho ,r_0/2)) +  P(F_\rho; \pa \mathbf C (0,\rho ,r_0/2)) +  P(F_\rho; \R^{n+1} \setminus \overline{\mathbf C (0,\rho ,r_0/2)}) \\
&= P(E;\mathbf C(0,\rho ,r_0/2)) +  \Ha^n(\pa^* F_\rho \cap \pa \mathbf C (0,\rho ,r_0/2)) +  P(E^h_{\min}; \R^{n+1} \setminus \overline{\mathbf C(0,\rho ,r_0/2)}) \\
&\leq P(E;\mathbf C(0,\rho ,r_0/2)) +  P(E^h_{\min}, \R^n \setminus \overline{\mathbf C(0,\rho ,r_0/2)}) \\
&+ \Ha^n\left(\pa \mathbf C(0,\rho ,r_0/2) \cap \left(\overline {E^h_{\min}} \cup \overline E \setminus (E^h_{\min} \cap E)\right)\right) \\
&\leq (1 +C \rho^2)|B^n_\rho|+  P(E^h_{\min}; \R^{n+1} \setminus \overline{\mathbf C(0,\rho ,r_0/2)}) + C \rho^{n-1}h.
\end{align*}
Thus, the inequality $\mathcal F_h(E^h_{\min},E) \leq \mathcal F_h(F_\rho,E)$, \eqref{lemma:improved-regularity7}, $|E^h_{\min}|=m_0$ and the definition of $F_\rho$ yield
\begin{align*}
P(E^h_{\min};\mathbf C(0,\rho ,r_0/2)) 
&\leq (1 +C \rho^2) |B^n_\rho|_n + \frac1h \int_{\mathbf C(0,\rho ,r_0/2)) \cap (E^h_{\min} \Delta E)} |d_E| \, \d x + \frac{1}{\sqrt h} ||F_\rho|-m_0|
+ C \rho^{n-1}h  \\
&\leq  (1 + C\rho^2) |B^n_\rho| + C\left(1+\frac{1}{\sqrt h}\right)|\mathbf C (0,\rho ,r_0/2)\cap (E^h_{\min} \Delta E)|
+ C\rho^{n-1}h  \\
&\leq  (1 + C\rho^2)|B^n_\rho| + C(\rho^n \sqrt h + \rho^{n-1}h).
\end{align*}

Recall that for the fixed point  $x \in \pa E^h_{\min}$ it holds  $x=d_E(x)e_{n+1}$ with $|d_E(x)| \leq Ch$. Thus we may assume $B_\rho (x) \subset  \mathbf C(0,\rho ,r_0/2)$
for  $0<\rho<r_0/4$. Hence, the above  estimate yields
\beq
\label{lemma:improved-regularity9}
P(E^h_{\min};B_\rho (x)) \leq (1 + C\rho^2)|B^n_\rho| + C(\rho^n \sqrt h + \rho^{n-1}h).
\eeq
Moreover, it holds  $\|H_{E^h_{\min}}\|_{L^\infty(\pa^* E_{\min}^h)} \leq C$ by Proposition \ref{prop:distance-bound}, $P(E^h_{\min}) \leq P(E)$ and $P(E) \leq C$ by Lemma \ref{lemma:calibration} and therefore
\[
{\rho}^{\frac13}\left( \int_{B_{\rho}(x) \cap \pa^* E^h_{\min}} |H_{E^h_{\min}}|^{\frac{3n}{2}} \ \d \Ha^n \right)^{\frac{2}{3n}} \leq C{\rho}^{\frac13}.
\]
Hence,  we infer from the previous estimate and \eqref{lemma:improved-regularity9} the existence of numbers $\tilde h$ and $ \rho$ satisfying 
\eqref{lemma:improved-regularity3} and \eqref{lemma:improved-regularity4}.
\newline
\newline
\textbf{Step 2:} We assume that $h \leq h_1$ and fix $x \in \pa E^h_{\min}$. We may assume that $x = 0$ and $\nu_{E^h_{\min}}(0) = e_{n+1}$. 
According to Step 1, up to a possible rotation of the coordinates, there is $f \in C^3(B^n_{\rho_1}(x'))$ with $f(0) = \nabla f(0)= 0$ satisfying \eqref{lemma:improved-regularity1}
and \eqref{lemma:improved-regularity2}. 
We use  Schauder estimate in a quantitative manner to prove there is a positive $h_0=h_0(n,m_0,r_0) \leq h_1$ such that $h \leq h_0$ implies
\beq
\label{lemma:improved-regularity10}
\|\nabla^2 f\|_{L^\infty( B^n_{\rho/2})} \leq C h^{-\frac13}.
\eeq
Once we have proven \eqref{lemma:improved-regularity10} then the claim  that $E^h_{\min}$  satisfies uniform ball condition with radius $c_0 h^{1/3}$ follows 
in a straightforward manner as we discussed in Remark \ref{rem:C11-UCB}.


Thus, we are left to prove \eqref{lemma:improved-regularity10}. We may write
$H_{E^h_{\min}}$  in  local coordinates as the mean curvature of the subgraph $\{(y',y_{n+1}: y' \in B^n_{\rho}, \ y_{n+1} < f(y')\}$, that is,
\beq
\label{lemma:improved-regularity11}
H_{E^h_{\min}} (y',f(y')) = - \diver \left(\frac{\nabla f}{\sqrt{1+|\nabla f|^2}}\right) (y') = -\Tr\left(\mathcal A (y') \nabla^2 f (y')\right).
\eeq
It follows from $\eqref{lemma:improved-regularity2}$ that $\mathcal A$ is uniformly elliptic and bounded in the $C^{0,1/3}$-sense. 
To be more precise, we have 
 \[
 \inf_{y' \in B^n_{\rho}} \min_{\xi \in \pa B^n_1} \mathcal A (y') \xi \cdot \xi \geq 1/C \ \ \text{and} \ \ \max_{ij} \|[\mathcal A]_{ij} \|_{C^{0,\frac13}( B^n_{\rho} )} \leq C.
\]
Thus, by using standard Schauder interior estimate \cite{GT}, \eqref{lemma:improved-regularity2} and \eqref{lemma:improved-regularity11}, we obtain
 \begin{align}
\notag
\|\nabla^2 f\|_{C^{0,\frac13}( B^n_{\rho/2} )} &\leq C\left(\|u\|_{C^{0,\frac13}( B^n_{\rho} )} 
+ \|f\|_{L^\infty(B^n_{\rho})}\right) \\
\label{lemma:improved-regularity12}
 &\leq C\left(\|u\|_{C^{0,\frac13}( B^n_{\rho} )} +1\right),
\end{align}
where $u :  B^n_{\rho_1} \rightarrow \R^n$ is given by $u(y') =H_{E^h_{\min}} (y',f(y'))$. We may
assume $h$ is chosen sufficiently small so that via Proposition \ref{prop:distance-bound} we have $\|u\|_{L^\infty( B^n_{\rho} )} \leq C$. 
Again, \eqref{lemma:improved-regularity2} implies $|\nabla u (y')| \leq C |\nabla_\tau H_{E^h_{\min}} (y',f(y'))|$ 
for every $y' \in B^n_{\rho}$. On the other 
hand, by (tangentially) differentiating the Euler-Lagrange equality \eqref{euler} we obtain $|\nabla_\tau H_{E^h_{\min}} (y',f(y'))| \leq 1/h$ for every $y' \in B^n_{\rho}$. Hence, $\|\nabla u\|_{L^\infty(B^n_{\rho})} \leq C/h$ 
and since $\|u\|_{L^\infty( B^n_{\rho} )} \leq C$, assuming $h \leq 1$ yields $\|u\|_{C^1( B^n_{\rho})} \leq C/h$. Again, Lemma  \ref{lem:inter-holder} yields
$\|u\|_{C^{0,1/3}( B^n_{\rho} )} \leq C h^{-1/3}$ and hence, by recalling \eqref{lemma:improved-regularity12},
we conclude the existence of $h_0 = h_0(n,m_0,r_0)$ satisfying \eqref{lemma:improved-regularity10} for all $h \leq h_0$.
\end{proof}

\begin{remark}
\label{rem:improved-regularity}
We may replace the exponent $1/3$ with a generic $0<\alpha<1$ in the proof of Lemma \ref{lemma:improved-regularity}.
Then, naturally, $h_0$ and $c_0$ also depend on $\alpha$.
The uniform ball conditions with radius $r_0$ for $E$ and with radius $c_0 h^{1/3}$ for $E^h_{\min}$ imply together with the distance estimate of Proposition \ref{prop:distance-bound} and \eqref{def:proj} that 
there is $h_0 = h_0(n,m_0,r_0)$ such that if $h \leq h_0$, then $\nabla d_E \cdot \nu_{E^h_{\min}}> 0$ on $\pa E^h_{\min}$ and the projection $\pi_{\pa E}$ is injective on $\pa E^h_{\min}$.
\end{remark}

\section{Uniform ball condition for short-time}

In this section, we adopt the two-point function method to prove that if the initial set 
$E_0$ satisfies uniform ball condition with radius $r_0$, then there are positive numbers $h_0$ and $T_0$
such that 
\beq
\label{eq:rt}
h \leq h_0 \implies  E^h_t \  \text{satisfies uniform ball condition with radius $r_0/2$ for $0\leq t \leq T_0$},
\eeq
where the approximative flow $(E_t^h)_{t \geq 0}$ starting from $E_0$ is defined as in \eqref{app flow}.
 For more precise statement see Theorem \ref{thm2} 
at the end of the section. As we have seen in Lemma
\ref{lemma:improved-regularity}, uniform ball condition for an initial set is crucial as it guarantees that the corresponding minimizer of the energy \eqref{min mov} has improved regularity and an initial quantitative bound on the uniform ball condition
although the latter is highly dependent of $h$. In  this section, we improve the previous non-sharp estimate on the uniform ball condition for the minimizer by showing the minimizer satisfies almost the same uniform ball condition as the initial set.

The original idea of the two-point function  goes back  to \cite{Hui}, where it is used to study the regularity of the classical solution to the mean curvature flow. We refer to \cite{Bre} for a comprehensive overview of the topic and mention also the works \cite{And, Bre2, DG} which have inspired us.  Here we will show that the method can be  applied to the approximative flat flow at the level of discrete time scale. We will assume that the approximative flat flow is related to the volume preserving mean curvature flow but the arguments hold with essentially no modifications also in the case of the mean curvature flow. 

\subsection{Two-point function method}
The main idea is to double the variables and, given a set $E \subset \R^{n+1}$ satisfying uniform ball condition, to study the function $S_E$ defined for $(x,y) \in \pa E \times \pa E$ with $x \neq y$ as       
\beq \label{def:S-E}
S_E(x,y) := \frac{(x-y) \cdot \nu_E(x)}{|x-y|^2}.
\eeq
It is known, but we will include the proof below, that the maximum value of  $|S_E|$ is explicitly related to the  uniform ball condition. In other words, doubling the variables allows us to quantify the   uniform ball condition via the function $S_E$. It is interesting that the idea of  doubling the variables is also used  in \cite{IL} to study  regularity of solutions of nonlinear PDEs.

For the next lemma we note that if a set $E$ satisfies uniform ball condition with radius $r$, then it satisfies it also for every $\rho < r$. We define $r_E$ to be the supremum of such radii  and recalling our previous discussion we may write this as
\beq \label{def:r-E}
r_E = \sup \{ r >0 : d_E \ \ \text{is differentiable in} \ \ \mathcal{N}_r(\pa E)\}. 
\eeq
Note that  $r_E>0$. We use the abbreviation $\|S_E\|_{L^\infty} := \sup\{ |S_E(x,y)| : x,y \in \pa E, \ x \neq y \}$.
\begin{lemma}
\label{lem:S-E}
Let $E \subset \R^{n+1}$ be an open and  bounded set satisfying uniform ball condition. Then it holds 
\[
2\| S_E\|_{L^\infty} = \frac{1}{r_E} \quad \text{and} \quad \frac{|\nu(x) - \nu(y)|}{|x-y|} \leq  2 \| S_E\|_{L^\infty}  \quad \text{for every $x,y \in \pa E$ with $x\neq y$}.
\] 
where $r_E$ is defined in \eqref{def:r-E}. In the case $E$ is $C^2$-regular, we also have 
$|H_E|, |B_E| \leq 2n \| S_E\|_{L^\infty}$ on $\pa E$.
\end{lemma}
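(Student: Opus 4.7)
All three claims stem from the geometric interpretation of $S_E$: for a fixed $x \in \pa E$ the level set $\{y : S_E(x,y)=s\}$ is the sphere of radius $1/(2s)$ through $x$ tangent there to $\langle \nu_E(x)\rangle^\perp$, so bounding $|S_E|$ is equivalent to keeping $\pa E$ outside of the inscribed and exscribed tangent balls at $x$. My plan is to reduce everything to this picture.

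\emph{Part (1).} For the $``\leq''$ direction of $2\|S_E\|_{L^\infty} = 1/r_E$, I would fix $r<r_E$ and $x\neq y$ on $\pa E$. Since $B_r(x \mp r\nu_E(x))$ lies in $E$ respectively $\R^{n+1}\setminus E$, neither open ball can contain $y$; expanding $|y-(x\mp r\nu_E(x))|^2 \geq r^2$ yields $\mp 2r(x-y)\cdot\nu_E(x) \leq |x-y|^2$, hence $|S_E(x,y)| \leq 1/(2r)$, and sending $r \nearrow r_E$ closes this direction. For the reverse, set $M=\|S_E\|_{L^\infty}$ and fix any $r<1/(2M)$. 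The same algebra read backwards gives $|y-(x\pm r\nu_E(x))|^2 - r^2 \geq (1-2rM)|x-y|^2>0$ for every $y \in \pa E\setminus\{x\}$, so $\pa E$ is disjoint from the two open balls $B_r(x\pm r\nu_E(x))$. To promote this into the actual inclusions required by UBC, I would invoke the a priori ball condition of $E$ to place the short inward segment $\{x-t\nu_E(x):0<t<\delta\}$ inside $\mathrm{int}(E)$; connectedness of $B_r(x-r\nu_E(x))$ together with the separating role of $\pa E$ then forces the whole ball into $\mathrm{int}(E)$, and the symmetric argument places $B_r(x+r\nu_E(x))$ in $\R^{n+1}\setminus\overline{E}$. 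Hence $E$ satisfies UBC with radius $r$, giving $r_E \geq 1/(2M)$.

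\emph{Parts (2) and (3).} The Lipschitz estimate for $\nu_E$ is the two-ball calculation behind \eqref{eq:unifball-reg-1} repackaged via part (1). For any $r<r_E$ the two pairs of balls $\bigl(B_r(x-r\nu_E(x)),B_r(y+r\nu_E(y))\bigr)$ and $\bigl(B_r(x+r\nu_E(x)),B_r(y-r\nu_E(y))\bigr)$ are each disjoint (one interior, one exterior), so their centers satisfy $|x-y\mp r(\nu_E(x)+\nu_E(y))|^2 \geq 4r^2$. Summing the two squared inequalities makes the cross term cancel, and after using $|\nu_E(x)+\nu_E(y)|^2 = 4 - |\nu_E(x)-\nu_E(y)|^2$ I am left with $|\nu_E(x)-\nu_E(y)|^2 \leq |x-y|^2/r^2$; passing $r\nearrow r_E$ and invoking part (1) gives the stated bound. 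The curvature estimate in part (3) is then immediate from \eqref{eq:curva-bound-tri}, which at every $r<r_E$ yields $|H_E|\leq n/r$ and $|B_E|_{\mathrm{op}}\leq 1/r$; sending $r\nearrow r_E$ and combining with $|B_E|\leq \sqrt n\,|B_E|_{\mathrm{op}}$ gives $|H_E|\leq 2n\|S_E\|_{L^\infty}$ and $|B_E|\leq 2\sqrt n\,\|S_E\|_{L^\infty}\leq 2n\|S_E\|_{L^\infty}$.

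\emph{Main obstacle.} The only step requiring genuine argument beyond algebraic manipulation is the reverse direction in part (1): the pointwise bound $|S_E|\leq M$ only keeps $\pa E$ out of the candidate tangent balls and does not by itself guarantee that these balls lie on the correct sides of $\pa E$. This is precisely where the standing hypothesis that $E$ already enjoys some UBC is essential, since it supplies the interior segment that anchors the connectedness argument and fixes the orientation of $\nu_E$.
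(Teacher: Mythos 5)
Your proof is correct, but for the identity $2\|S_E\|_{L^\infty}=1/r_E$ it takes a genuinely different route from the paper. The paper works directly with the characterization \eqref{def:r-E} of $r_E$ through non-differentiability of $d_E$: for the lower bound it picks points $z_i$ where $d_E$ fails to be differentiable with $|d_E(z_i)|\to r_E$, takes the two equidistant projections $x_i,y_i\in\pa E$ of $z_i$, and computes the exact identity $|S_E(x_i,y_i)|=1/(2|d_E(z_i)|)$ (the calculation \eqref{eq:S-E-1}); for the upper bound it runs the same computation backwards, constructing for arbitrary $x\neq y$ the equidistant point $z$ on the normal line at $x$ and concluding $|x-z|\geq r_E$ because $d_E$ cannot be differentiable at $z$. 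You instead work with the interior/exterior tangent balls: your upper bound is the inequality version of the same algebra ($y$ lies outside both balls of radius $r<r_E$), and your lower bound reconstructs the uniform ball condition of radius $r<1/(2\|S_E\|_{L^\infty})$ from the pointwise bound on $S_E$, which requires the topological step of orienting the balls via connectedness seeded by the a priori UBC. You correctly flag that step as the crux; the paper's choice of extremal/equidistant points sidesteps it entirely and is shorter, while your argument is more self-contained in that it exhibits explicitly how a bound on $S_E$ regenerates the ball condition (at the price of invoking the equivalence between differentiability of $d_E$ in $\mathcal N_r(\pa E)$ and UBC with radius $r$). For the Lipschitz bound on $\nu_E$ and the curvature bounds, both arguments coincide: you spell out the two-ball computation behind \eqref{eq:unifball-reg-1} and the estimates \eqref{eq:curva-bound-tri} that the paper simply cites via Proposition \ref{prop:unifball-reg}.
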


\begin{proof}
Let us first show $2\| S_E\|_{L^\infty} \geq 1/r_E$. We infer from the boundedness of $E$ that $r_E < \infty$ and, hence, it follows from the definition of $r_E$ that there is a sequence of points $z_i$ such that $d_E(z_i)$ is not differentiable at $z_i$ and $|d_E(z_i)| \to r_E$. Since the signed distance function is not differentiable at $z_i$, then there are two distinct points $x_i, y_i \in \pa B_{|d_E(z_i)|}(z_i) \cap  \pa E$. Since the intersection $ B_{|d_E(z_i)|}(z_i) \cap \pa E$ is empty it holds 
\[
\nu_E(x_i) = \pm \frac{x_i - z_i}{|x_i - z_i|}.
\]
Therefore, recalling also that $|z_i -y_i|= |x_i - z_i| = |d_{E}(z_i)|$ we have
\beq \label{eq:S-E-1}
\begin{split}
|S_E(x_i,y_i)| = \frac{|(x_i-y_i) \cdot \nu_E(x_i)|}{|x_i-y_i|^2} &= \frac{|(x_i-y_i) \cdot (x_i-z_i)|}{|d_{E}(z_i)| \, |x_i-y_i|^2}\\
&= \frac{\big|\big(x_i -z_i)-(z_i-y_i)\big) \cdot (x_i-z_i)\big|}{|d_{E}(z_i)| \, |x_i-y_i|^2}\\
&= \frac{|x_i -z_i|^2 -(z_i-y_i)\cdot (x_i-z_i)}{|d_{E}(z_i)| \, |x_i-y_i|^2}\\
&= \frac{|x_i -z_i|^2 -2(z_i-y_i)\cdot (x_i-z_i) + |z_i -y_i|^2}{2|d_{E}(z_i)| \, |x_i-y_i|^2}\\
&= \frac{\big| (x_i -z_i) - (z_i-y_i)\big|^2}{2|d_{E}(z_i)| \, |x_i-y_i|^2} \\
&= \frac{1}{2|d_{E}(z_i)| }. 
\end{split}
\eeq
Since, $|d_E(z_i)| \to r_E$ we obtain $2\| S_E\|_{L^\infty} \geq 1/r_E$. 

Let us then show $2\| S_E\|_{L^\infty} \leq 1 /r_E$. To this end, we fix  $x,y \in \pa E$ with $x \neq y$. 
Recall that $G_x \pa E$ denotes the geometric tangent plane of $\pa E$ at $x$. If $x-y  \in G_x\pa E$ then $S_E(x,y)= 0$. If $x-y \notin G_x \pa E$, we find a point $z$ on the line $\{x + t \nu_E(x) : t \in \R\}$ such that $|x-z|= |y-z|$. In other words,  there is $z \in \R^{n+1}$ such that 
\[
\nu_E(x) = \pm \frac{x - z}{|x - z|} \qquad \text{and} \qquad |x-z|= |y-z| =: R.
\]
By repeating the calculations in \eqref{eq:S-E-1} for $x,y$ and $z$ we deduce
$|S_E(x_i,y_i)| =1/(2R)$. Since $|x-z|= |y-z|=R$ and $x,y \in \pa E$, then the signed distance function $d_E$ is not differentiable at $z$. Thus, by the definition of $r_E$ in \eqref{def:r-E} it holds $R \geq r_E$ and we have the inequality $2\| S_E\|_{L^\infty} \leq 1/r_E$.  The rest of the claim is now a direct consequence of $2\| S_E\|_{L^\infty} = 1/r_E$, \eqref{eq:curva-bound-tri} 
and  Proposition \ref{prop:unifball-reg}. 
\end{proof}

An obvious consequence of Lemma \ref{lem:S-E} is that for every open and bounded set $E\subset \R^{n+1}$ it holds
\beq \label{eq:S-E-low}
\| S_E\|_{L^\infty} \geq c_0
\eeq
for a positive constant $c_0 = c_0(n,|E|)$.

We will also use the regularized version of $S_E$, which we define for any $\eps \in \R_+$  as  $S_{E,\eps}: \pa E \times \pa E \to \R$,
\beq \label{def:S-E-eps}
S_{E,\eps}(x,y) := \frac{(x-y) \cdot \nu_E(x)}{|x-y|^2+ \eps}.
\eeq
As in the case of $S_E$, we use the abbreviation $\|S_{E,\epsilon}\|_{L^\infty} = \max \{ |S_{E,\eps}(x,y)| : (x,y) \in \pa E \times \pa E\}$. 
The idea behind considering $S_{E,\eps}$ instead of $S_E$ is that, on the one hand, $S_{E,\epsilon} \rightarrow S$ pointwise
in $\pa E \times \pa E \setminus \{(x,x): x \in \pa E\}$ as $\epsilon$ tends to zero (in particular, $\|S_{E,\epsilon}\|_{L^\infty} \uparrow
\|S_E\|_{L^\infty}$) and, on the other hand, we may differentiate $S_{E,\epsilon}$ on the product $\pa E \times \pa E$
provided that $E$ is sufficiently regular. The followings calculations are similar to \cite{And, DL}  but we give them  in order to be self-consistent.

 Let us first differentiate $S_{E,\eps}$ in the case $E$ is $C^2$-regular. In the computations, the notations $\nabla^x_\tau$ and $\nabla^y_\tau$ stand for the tangential differentiation along $\pa E$ with respect to $x$ and $y$ -variables respectively. Recalling the basic identities \eqref{eq:BH} as well as  observing $B_E \nu_E = 0$ and $\nabla_\tau \id = P_{\pa E}$ on $\pa E$ we compute 
\beq \label{eq:S-E-diff-x}
\begin{split}
\nabla_\tau^x  S_{E,\eps}(x,y) &= \frac{\nabla_\tau^x \big((x-y) \cdot \nu_E(x)\big)}{|x-y|^2+ \eps} - \frac{(x-y) \cdot \nu_E(x)}{(|x-y|^2+ \eps)^2}  \nabla_\tau^x |x-y|^2\\
&= \frac{B_E(x)(x-y) -  2 S_{E,\eps}(x,y)\,  P_{\pa E}(x)(x-y) }{|x-y|^2+ \eps}.
\end{split}
\eeq  
and 
\beq \label{eq:S-E-diff-y}
\begin{split}
\nabla_\tau^y  S_{E,\eps}(x,y) &= \frac{\nabla_\tau^y \big((x-y) \cdot \nu_E(x)\big)}{|x-y|^2+ \eps} - \frac{(x-y) \cdot \nu_E(x)}{(|x-y|^2+ \eps)^2}  \nabla_\tau^y |x-y|^2\\
&= \frac{P_{\pa E}(y)\big( - \nu_E(x) + 2 S_{E,\eps}(x,y) (x-y)\big) }{|x-y|^2+ \eps}
\end{split}
\eeq
for every $(x,y) \in \pa E \times \pa E$.
We immediately obtain the following identities at critical points. 

\begin{lemma}
\label{lem:S-E-critical}
Let  $E \subset \R^{n+1}$ be a bounded and $C^2$-regular set.  Assume $(x,y) \in \pa E \times \pa E$ is a local maximum or a local minimum  point of $S_{E,\eps}$ defined in \eqref{def:S-E-eps}. Then it holds 
\begin{align}
 \label{eq:S-E-critical-1}
B_E(x)(x-y) &= 2  S_{E,\eps}(x,y) P_{\pa E}(x) (x-y) \ \ \text{and} \\
\label{eq:S-E-critical-2}
P_{\pa E}(y)\nu_E(x)  &=  2  S_{E,\eps}(x,y) P_{\pa E}(y) (x-y).
\end{align}
Moreover, the condition $r_E> \sqrt \eps$ implies
\beq \label{eq:S-E-critical-3}
\nu_E(y) = \frac{\nu_E(x)  - 2  S_{E,\eps}(x,y) (x-y)}{\big( \nu_E(x)  - 2  S_{E,\eps}(x,y) (x-y) \big) \cdot \nu_E(y)}.
\eeq
\end{lemma}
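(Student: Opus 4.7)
The plan is to extract the three identities directly from the explicit gradient formulas \eqref{eq:S-E-diff-x} and \eqref{eq:S-E-diff-y}. Since the extremum is interior to the smooth manifold $\pa E \times \pa E$, the tangential gradients must vanish there, and setting the right-hand sides of \eqref{eq:S-E-diff-x} and \eqref{eq:S-E-diff-y} to zero at $(x,y)$ and multiplying by $|x-y|^2 + \eps$ yields \eqref{eq:S-E-critical-1} and \eqref{eq:S-E-critical-2} at once. (If $x = y$ happens to be the extremum, then $S_{E,\eps}(x,x) = 0$ and all three identities collapse to trivialities, so there is no loss in assuming $x \neq y$.)

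For \eqref{eq:S-E-critical-3}, I would introduce the auxiliary vector
\[
v := \nu_E(x) - 2 S_{E,\eps}(x,y)\,(x-y),
\]
so that \eqref{eq:S-E-critical-2} reads $P_{\pa E}(y)\, v = 0$. Since $P_{\pa E}(y)$ is the orthogonal projection onto the geometric tangent plane at $y$, its kernel is the one-dimensional line spanned by $\nu_E(y)$, and hence $v = (v \cdot \nu_E(y))\,\nu_E(y)$. Provided $v \cdot \nu_E(y) \neq 0$, dividing through gives precisely \eqref{eq:S-E-critical-3}.

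The only nontrivial point is therefore the nonvanishing of $v$, and this is where the hypothesis $r_E > \sqrt{\eps}$ enters. Using the defining identity $(x-y)\cdot \nu_E(x) = S_{E,\eps}(x,y)\,(|x-y|^2 + \eps)$ that comes from \eqref{def:S-E-eps}, one computes
\[
|v|^2 = 1 - 4 S_{E,\eps}(x,y)^2 (|x-y|^2 + \eps) + 4 S_{E,\eps}(x,y)^2 |x-y|^2 = 1 - 4\eps\, S_{E,\eps}(x,y)^2.
\]
Combined with the pointwise estimate $|S_{E,\eps}| \leq |S_E|$, immediate from the definitions, and Lemma \ref{lem:S-E}, which gives $\|S_E\|_{L^\infty} = 1/(2 r_E)$, this yields $|v|^2 \geq 1 - \eps/r_E^2 > 0$. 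Since $v$ is parallel to $\nu_E(y)$, the inner product $v \cdot \nu_E(y) = \pm |v|$ is then also nonzero, and \eqref{eq:S-E-critical-3} follows. I do not anticipate any serious obstacle here: the content of the lemma is essentially a careful unpacking of what the vanishing of the tangential gradients says, together with the trivial norm computation for $v$ that uses the uniform ball bound on $S_E$ to absorb the $\eps$-perturbation.
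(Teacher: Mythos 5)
Your proof is correct and follows essentially the same route as the paper: the first two identities come from the vanishing of the tangential gradients \eqref{eq:S-E-diff-x}--\eqref{eq:S-E-diff-y}, and the third from the fact that $P_{\pa E}(y)v=0$ forces $v=\nu_E(x)-2S_{E,\eps}(x,y)(x-y)$ to be parallel to $\nu_E(y)$, so that it only remains to rule out $v=0$ using $r_E>\sqrt\eps$. The sole (cosmetic) difference is in that last step: the paper argues by contradiction (assuming $v=0$ forces $|x-y|=\sqrt\eps$ and then $1\leq \sqrt\eps/r_E$), whereas your direct identity $|v|^2=1-4\eps\,S_{E,\eps}(x,y)^2\geq 1-\eps/r_E^2>0$ reaches the same conclusion a bit more cleanly.
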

\begin{proof}
Since $(x,y)$ is a critical point for the functions $S_{E,\eps}( x , \ \cdot \ )$ and $S_{E,\eps}( \ \cdot \ , y )$, then the equality \eqref{eq:S-E-critical-1} follows from \eqref{eq:S-E-diff-x} and the equality \eqref{eq:S-E-critical-2} follows from \eqref{eq:S-E-diff-y}. Using $P_{\pa E}(y) = I - \nu_E(y) \otimes \nu_E(y)$ and  \eqref{eq:S-E-critical-2}  we have
\[
\nu_E(x)-2  S_{E,\eps}(x,y) (x-y) = \left[\left(\nu_E(x)-2  S_{E,\eps}(x,y) (x-y) \right) \cdot \nu_E(y)\right] \nu_E(y).
\]
The equality \eqref{eq:S-E-critical-3} thus follows once we show 
\beq \label{eq:S-E-critical-4}
\nu_E(x)-2  S_{E,\eps}(x,y) (x-y) \neq 0. 
\eeq
We argue by contradiction and assume $\nu_E(x)=2  S_{E,\eps}(x,y) (x-y)$. Then it holds $S_{E,\eps}(x,y) \neq 0$ and the definition of $ S_{E,\eps}(x,y)$ implies
\[
S_{E,\eps}(x,y) = \frac{(x-y)\cdot \nu_E(x) }{|x-y|^2+ \eps} = 2  S_{E,\eps}(x,y)\frac{|x-y|^2}{|x-y|^2+ \eps}. 
\] 
Therefore, we have $|x-y| = \sqrt{\eps}$. On the other hand,  the contradiction assumption, the definition of $S_{E,\eps}$ and Lemma \ref{lem:S-E} together yield
\[
1 = |\nu_E(x)| = 2 |S_{E,\eps}(x,y)|\,  |x-y| = 2 |S_{E,\eps}(x,y)| \sqrt{\eps} \leq 2 \| S_E\|_{L^\infty} \sqrt{\eps} = \frac{\sqrt\eps}{r_E},
\]
which is impossible by the assumption $r_E > \sqrt \eps$. 
\end{proof}

If $E$ enjoys higher regularity and $\eps$ is sufficiently small, we may naturally extract more information at local extreme points. 
Indeed, if $E$ is $C^3$-regular, then by maximum principle  at a local maximum (minimum)   point $(x,y) \in \pa E \times \pa E$ of $S_{E,\eps}$  it holds
\beq\label{eq:S-E-max-1}
\Delta_\tau^x S_{E,\eps}(x,y) +2 \diver_\tau^x \nabla_\tau^y S_{E,\eps}(x,y)  +  \Delta_\tau^y S_{E,\eps}(x,y)  \overset{(\geq)}{\leq} 0. 
\eeq
We calculate the LHS of \eqref{eq:S-E-max-1} in the next lemma. 

\begin{lemma}
\label{lem:S-E-max}
Let $E \subset \R^{n+1}$ be a bounded and $C^3$-regular set with $r_E > \sqrt \eps$.  At a local maximum (minimum)   point $(x,y) \in \pa E \times \pa E$ of $S_{E,\eps}$ it holds 
\[
\begin{split}
&\frac{\nabla_\tau H_E(x) \cdot (x-y)}{|x-y|^2 +\eps} + \frac{(\nu_E(x)\cdot \nu_E(y)) \, H_E(y)- H_E(x) }{|x-y|^2 +\eps} \\
&\overset{(\geq)}{\leq} |B_E(x)|^2 S_{E,\eps}(x,y) - 2H_E(x) S_{E,\eps}^2(x,y) - 2 H_E(y) S_{E,\eps}(y,x)S_{E,\eps}(x,y).
\end{split}
\]
\end{lemma}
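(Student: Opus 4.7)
My plan is to exploit the product identity $S_{E,\eps}\cdot \varphi = \psi$, where $\varphi(x,y) = |x-y|^2 + \eps$ and $\psi(x,y) = (x-y)\cdot\nu_E(x)$, after extending $\nu_E$ to a tubular neighborhood of $\pa E$ by $\nabla d_E$ so that all ambient derivatives make sense. At a local extremum $(x,y)$ of $S_{E,\eps}$, equations \eqref{eq:S-E-diff-x}--\eqref{eq:S-E-diff-y} give $\nabla_\tau^x S_{E,\eps} = \nabla_\tau^y S_{E,\eps} = 0$, so the cross terms in the product rule drop out and
\[
\Delta_\tau^\bullet S_{E,\eps} = \varphi^{-1}\bigl(\Delta_\tau^\bullet \psi - S_{E,\eps}\,\Delta_\tau^\bullet \varphi\bigr), \quad \diver_\tau^x \nabla_\tau^y S_{E,\eps} = \varphi^{-1}\bigl(\diver_\tau^x \nabla_\tau^y \psi - S_{E,\eps}\,\diver_\tau^x \nabla_\tau^y \varphi\bigr).
\]
Adding the three second-order quantities and feeding the result into the maximum-principle inequality \eqref{eq:S-E-max-1} reduces the lemma to a bookkeeping/algebraic verification.

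For the computation I would use \eqref{eq:Delta-id-normal}, the product rule $\Delta_\tau(F\cdot G) = F\cdot \Delta_\tau G + 2 \nabla_\tau F : \nabla_\tau G + G\cdot \Delta_\tau F$, and the elementary identities $\nabla_\tau\id = P_{\pa E}$, $\nabla_\tau\nu_E = B_E$, $P_{\pa E}:P_{\pa E} = n$, and $\nabla_\tau^x(\nu_E(x)\cdot a) = B_E(x)a$ for constant $a$. These yield closed-form expressions for $\Delta_\tau^\bullet \varphi$, $\Delta_\tau^\bullet \psi$, $\diver_\tau^x\nabla_\tau^y \varphi$ and $\diver_\tau^x\nabla_\tau^y\psi$ in terms of $H_E$, $B_E$, $\nu_E$ and $r:=x-y$ (for instance, $\Delta_\tau^x \varphi = 2n - 2H_E(x)(r\cdot\nu_E(x))$ and $\Delta_\tau^x \psi = -|B_E(x)|^2 (r\cdot\nu_E(x)) + r\cdot \nabla_\tau H_E(x) + H_E(x)$, and similarly for the others). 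After substituting and using $r\cdot \nu_E(x) = S_{E,\eps}\,\varphi$ to interchange rational and polynomial parts, the combined sum should reorganize as
\[
\Delta_\tau^x S_{E,\eps} + 2\diver_\tau^x\nabla_\tau^y S_{E,\eps} + \Delta_\tau^y S_{E,\eps} = -|B_E(x)|^2 S_{E,\eps} + 2 H_E(x) S_{E,\eps}^2 + 2 H_E(y) S_{E,\eps}(y,x) S_{E,\eps} + \frac{\mathrm{I} + \mathrm{II}}{\varphi},
\]
where $\mathrm{I} := r\cdot\nabla_\tau H_E(x) + H_E(y)\nu_E(x)\cdot\nu_E(y) - H_E(x)$ is precisely the numerator of the claim's LHS, and $\mathrm{II} := 2 B_E(x)(\nu_E(y),\nu_E(y)) + 4 S_{E,\eps}\bigl[(\nu_E(x)\cdot\nu_E(y))^2 - 1\bigr]$ is an apparent error.

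The main (and essentially the only nontrivial) step is to verify that $\mathrm{II}$ vanishes at the critical point. Here Lemma \ref{lem:S-E-critical} is used in full: identity \eqref{eq:S-E-critical-3} writes $\alpha\,\nu_E(y) = \nu_E(x) - 2 S_{E,\eps}\,r$, and unit-length of $\nu_E(y)$ forces $\alpha^2 = 1 - 4 S_{E,\eps}^2\eps$, which is where the hypothesis $r_E > \sqrt \eps$ enters. Combining this with $B_E(x)\nu_E(x) = 0$ and \eqref{eq:S-E-critical-1} (i.e., $B_E(x)r = 2 S_{E,\eps} P_{\pa E}(x) r$) produces $B_E(x)(\nu_E(y),\nu_E(y)) = 8\,S_{E,\eps}^3|P_{\pa E}(x) r|^2/\alpha^2$, while dotting $\alpha\,\nu_E(y)=\nu_E(x)-2S_{E,\eps}r$ with $\nu_E(x)$ gives $\nu_E(x)\cdot\nu_E(y) = (1-2S_{E,\eps}^2\varphi)/\alpha$ and thus, using $|P_{\pa E}(x) r|^2 = |r|^2 - S_{E,\eps}^2\varphi^2$ and $|r|^2 = \varphi - \eps$, the identity $(\nu_E(x)\cdot\nu_E(y))^2 - 1 = -4 S_{E,\eps}^2|P_{\pa E}(x) r|^2/\alpha^2$. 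The two terms of $\mathrm{II}$ then cancel exactly, and \eqref{eq:S-E-max-1} delivers the claimed inequality, with the reversed sign in the case of a local minimum.
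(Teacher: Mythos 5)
Your proposal is correct and follows essentially the same route as the paper: the same maximum-principle inequality \eqref{eq:S-E-max-1}, the same three second-order quantities (your product-rule bookkeeping via $S_{E,\eps}\varphi=\psi$ reproduces the paper's formulas for $\Delta_\tau^x S_{E,\eps}$, $\Delta_\tau^y S_{E,\eps}$ and $\diver_\tau^x\nabla_\tau^y S_{E,\eps}$ exactly, since the gradient terms drop at the critical point), and the same residual term, which the paper writes as $2\big(B_E(x)\nu_E(y)\big)\cdot\nu_E(y)-4S_{E,\eps}\big(P_{\pa E}(x)\nu_E(y)\big)\cdot\nu_E(y)$ and which coincides with your $\mathrm{II}$. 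Your verification that $\mathrm{II}=0$ via \eqref{eq:S-E-critical-1} and \eqref{eq:S-E-critical-3} is a scalar version of the paper's vector computation and is equally valid, with $r_E>\sqrt{\eps}$ entering in the same place (to guarantee $\alpha\neq 0$ in \eqref{eq:S-E-critical-3}).
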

\begin{proof}
First, we compute the terms on the LHS of \eqref{eq:S-E-max-1} by taking tangential divergences of \eqref{eq:S-E-diff-x} and \eqref{eq:S-E-diff-y} with respect to $x$ and $y$ -variables. In the computations, we use the identities \eqref{eq:Delta-id-normal} and the fact that the gradients $\nabla_\tau^x  S_{E,\eps}(x,y)$ and $\nabla_\tau^y S_{E,\eps}(x,y)$ vanish. 
Omitting all the details we obtain by straightforward calculation
\begin{align*}
\Delta_\tau^x S_{E,\eps}(x,y) &= \diver_\tau^x (\nabla_\tau^x S_{E,\eps}(x,y)  )\\
&= \diver_\tau^x\left( \frac{B_E(x)(x-y) -  2 S_{E,\eps}(x,y)\,  P_{\pa E}(x)(x-y) }{|x-y|^2+ \eps}\right)\\
&= \frac{\nabla_\tau H_E(x) \cdot (x-y)}{|x-y|^2 +\eps} +  \frac{H_E(x)}{|x-y|^2 +\eps} - 2 S_{E,\eps}(x,y) \frac{n}{|x-y|^2 +\eps} \\
&\,\,\,\,\,\,\,\,\,\, \,\,\,\,\,\,\,\,\,\,-|B_E|^2 S_{E,\eps}(x,y) + 2 S_{E,\eps}^2(x,y)\, H_E(x), 
\end{align*}
\begin{align*}
\Delta_\tau^y S_{E,\eps}(x,y) &= \diver_\tau^y (\nabla_\tau^y S_{E,\eps}(x,y)  )\\
&= \diver_\tau^y\left(- \frac{P_{\pa E}(y)\nu_E(x) + 2 S_{E,\eps}(x,y) \, P_{\pa E}(y) (x-y)}{|x-y|^2+ \eps}\right)\\
&=   \frac{(\nu_E(x)\cdot \nu_E(y)) \,  H_E(y)}{|x-y|^2 +\eps} - 2 S_{E,\eps}(x,y) \frac{n}{|x-y|^2 +\eps}\\
&\,\,\,\,\,\,\,\,\,\, \,\,\,\,\,\,\,\,\,\,+ 2 S_{E,\eps}(x,y) S_{E,\eps}(y,x)\,  H_E(y) 
\end{align*}
and
\begin{align*}
 \diver_\tau^x \nabla_\tau^y   S_{E,\eps}(x,y) &=\diver_\tau^x\left(- \frac{P_{\pa E}(y)\nu_E(x) + 2 S_{E,\eps}(x,y) \, P_{\pa E}(y) (x-y)}{|x-y|^2+ \eps}\right)\\
&= -\frac{H_E(x)}{|x-y|^2+ \eps} +  \frac{\big(B_E(x)\nu_E(y)\big) \cdot \nu_E(y)}{|x-y|^2+ \eps} \\
&\,\,\,\,\,\,\,\,\,\, \,\,\,\,\,\,\,\,\,\, +2 S_{E,\eps}(x,y) \, \frac{n}{|x-y|^2+ \eps} - 2 S_{E,\eps}(x,y) \, \frac{\big( P_{\pa E}(x) \nu_E(y)\big) \cdot \nu_E(y)}{|x-y|^2+ \eps}.
\end{align*}
Collecting the terms and applying the inequality \eqref{eq:S-E-max-1}, we obtain that at a local maximum (minimum) point it holds 
\[
\begin{split}
0 \overset{(\leq)}{\geq} &\frac{\nabla_\tau H_E(x) \cdot (x-y)}{|x-y|^2 +\eps} +\frac{(\nu_E(x)\cdot \nu_E(y)) \,  H_E(y)- H_E(x)}{|x-y|^2 +\eps}\\
&-|B_E|^2 S_{E,\eps}(x,y) +  2 S_{E,\eps}^2(x,y)\, H_E(x) + 2 S_{E,\eps}(x,y) S_{E,\eps}(y,x)\,  H_E(y) \\
&+  2\frac{\big(B_E(x)\nu_E(y)\big) \cdot \nu_E(y)}{|x-y|^2+ \eps}- 4 S_{E,\eps}(x,y)  \frac{\big( P_{\pa E}(x) \nu_E(y)\big) \cdot \nu_E(y)}{|x-y|^2+ \eps} .
\end{split}
\]
The claim follows once we show that the last line above vanishes, i.e., 
\beq \label{eq:S-E-max-2}
\big(B_E(x)\nu_E(y)\big) \cdot \nu_E(y) = 2 S_{E,\eps}(x,y) \big( P_{\pa E}(x) \nu_E(y)\big) \cdot \nu_E(y).
\eeq

Since $r_E > \sqrt \eps$, this follows by first applying the equalities \eqref{eq:S-E-critical-1} and \eqref{eq:S-E-critical-3} in Lemma \ref{lem:S-E-critical} and recalling $B_E(x) \nu_E (x)= 0$
\[
\begin{split}
B_E(x)\nu_E(y) &= - 2  S_{E,\eps}(x,y) \frac{B_E(x)(x-y)}{\big( \nu_E(x)  - 2  S_{E,\eps}(x,y) (x-y) \big) \cdot \nu_E(y)}\\
&=  - 4  S_{E,\eps}^2(x,y) \frac{P_{\pa E}(x)(x-y)}{\big( \nu_E(x)  - 2  S_{E,\eps}(x,y) (x-y) \big) \cdot \nu_E(y)}.
\end{split}
\]
Then we  use  \eqref{eq:S-E-critical-3} to deduce 
\[
P_{\pa E}(x) \nu_E(y) = - 2  S_{E,\eps}(x,y) \frac{P_{\pa E}(x) (x-y)}{\big( \nu_E(x)  - 2  S_{E,\eps}(x,y) (x-y) \big) \cdot \nu_E(y)}
\]
and \eqref{eq:S-E-max-2} follows. 
\end{proof}

In conclusion, by combining Lemma \ref{lem:S-E}  and Lemma \ref{lem:S-E-max}, we obtain that if a bounded $C^3$-regular set $E \subset \R^{n+1}$ satisfies $r_E > \sqrt \eps$, then at a local maximum (minimum) point $(x,y) \in \pa E \times \pa E$ of $S_{E,\eps}$ it holds
\beq
\label{est:3rdorderbound}
\substack{+ \\ (-)} \left(\frac{\nabla_\tau H_E(x) \cdot (x-y)}{|x-y|^2 +\eps} +\frac{(\nu_E(x)\cdot \nu_E(y)) \,  H_E(y)- H_E(x)}{|x-y|^2 +\eps}\right) \leq C_n \|S_E\|_{L^\infty}^3.
\eeq

\subsection{Short-time uniform ball estimate}
Let us turn our focus on how to prove \eqref{eq:rt}
for an approximative flat flow solution $(E^h_t)_{t\geq0}$ defined in  \eqref{app flow} when the initial set $E_0$ satisfies uniform ball condition with 
given a radius $r_0$. Assuming we may control the evolution of the quantity $\|S_{E^h_t}\|_{L^\infty}$, then thanks to Lemma \ref{lem:S-E} we also control (from below) the uniform ball condition for $E_t^h$.

 We motivate ourselves by consider this first in the continuous and embedded setting. Assume $(E_t)_t$ is a smooth flow and let $\nu_t$ and $V_t$ denote the outer
unit normal of $E_t$ and the normal velocity of the flow on $\pa E_t$ respectively. Then one may use the fact that for fixed $t$ there is a smooth \emph{normal parametrization} $(\Phi^t_s)_s$ of the flow such that 
$\Phi^t_0 = \id$ and $\pa_s \Phi^t_s = [V_s \, \nu_s] \circ \Phi^t_s$. This follows essentially from \cite[Thm 8]{AD}. 
It is straightforward to calculate that for such a parametrization
\beq 
\label{eq:flow-conti}
 \frac{\d}{\d s} \Phi^t_{t+s}  \ \bigg |_{s=0}   = V_t \, \nu_t \quad \text{and} \quad  \frac{\d}{\d s}  (\nu_{E_{t+s}} \circ \Phi^t_{t+s}) \ \bigg |_{s=0} = - \nabla_{\tau} V_t \quad \text{on} \ \ \pa E_t.
\eeq 
In the case of volume preserving mean curvature flow, we have $V_s = -(H_s - \bar {H_s})$, where $H_s$ is the scalar mean curvature on $\pa E_s$ and $\bar {H_s}$ its
integral average over $\pa E_s$. If $x$ and $y$ are distinct points on $\pa E_t$, then by using \eqref{eq:flow-conti} and 
the previous identity, we may compute
\beq
\label{eq:flow-conti-derivative}
\begin{split}
 \frac{\d}{\d s} S_{E_{t+s}} (\Phi^t_s(x),\Phi^t_s(y)) \bigg |_{s=0} = 
&\frac{\nabla_\tau H_E(x) \cdot (x-y)}{|x-y|^2} +\frac{(\nu_E(x)\cdot \nu_E(y)) \,  H_E(y)- H_E(x)}{|x-y|^2} \\
&\ \ \ \ + R_t(x,y),
\end{split}
\eeq
where the remainder term $R_t(x,y)$  has a bound $|R_t(x,y)| \leq C_n \|S_{E_t}\|_{L^\infty}^3$. Suppose that $\|S_{E_t}\|_{L^\infty} =  \pm S_{E_t}(x,y)$ and  the function $s \mapsto \|S_{E_{t+s}}\|_{L^\infty}$ is  differentiable at $s=0$, then  we deduce
\[
 \frac{\d}{\d s} \| S_{E_{t+s}}\|_{L^\infty} \bigg |_{s=0}  = \pm \frac{\d}{\d s} S_{E_{t+s}} (\Phi^t_s(x),\Phi^t_s(y)) \bigg |_{s=0}.
\]
Again, the estimate \eqref{est:3rdorderbound} also holds for $S_E$
when the points are distinct. Thus, by possibly increasing $C_n$, we infer from above and \eqref{eq:flow-conti-derivative}
\beq
\label{est:flow-conti-derivative-control}
\frac{\|S_{E_{t+s}}\|_{L^\infty} -  \|S_{E_t}\|_{L^\infty}} {s} \leq C_n \|S_{E_t}\|_{L^\infty}^3
\eeq
provided that $s \neq 0$ is sufficiently small.

The idea is to mimic the previous argument in the discrete setting for an approximative flat flow $(E^h_t)_{t\geq 0}$. To this end,
we need to approximate the two-point functional by its $\epsilon$-regularized version. We consider the element  $E^h_t$ and its consequent set  $E^h_{t+s}$. For sake of brevity, we use the shorthand 
notations $E_1 = E^h_t$ and $E_2 = E^h_{t+s}$ for the rest of the subsection. First, we want to find a discrete version of the equalities in \eqref{eq:flow-conti}. Suppose that an element $E_1$ satisfies uniform ball condition and $h$ is so small that
 by the discussion of the previous section we have that $E_2$ is $C^1$-regular set,
$\pa E_2 \subset \mathcal N_{r_{E_1}}(\pa E_1)$ and $\nabla d_{E_1} \cdot \nu_{E_2}>0$ on $\pa E_2$ are satisfied.

Then it is natural to project the boundary $\pa E_2$ to $\pa E_1$ by the projection 
$\pi_{\pa E_1}$ and, hence, using the identities in \eqref{def:proj} we have
\[
\frac{\id - \pi_{\pa E_1}}{h} =\frac{d_{E_1}}{h} (\nu_{ E_2} \circ \pi_{\pa E_1})  \quad \text{on $\pa E_2$}
\]
which can be seen as a discrete time counterpart of the first identity in \eqref{eq:flow-conti}. In the next simple but crucial lemma, we derive a relation between  $\nu_{E_2}$ and $\nu_{E_1} \circ \pi_{\pa E_1}$ for $x \in \pa E_2$.

\begin{lemma}
\label{lem:maaginen}
Assume that $E_1 \subset \R^{n+1}$ is open and  satisfies uniform ball condition, 
and $E_2$ is a $C^1$-regular set such that $\pa E_2 \subset \mathcal N_{r_{E_1}}(\pa E)$ and 
$ \nabla d_{E_1} \cdot \nu_{E_2}>0$ on $\pa E_2$. Then 
\[
\nu_{E_1} \circ \pi_{\pa E_1}  = \nabla_{\tau_2} d_{E_1}   + \sqrt{1 - |\nabla_{\tau_2} d_{E_1} |^2} \, \nu_{E_2} \ \ \text{on} \ \ \pa E_2.
\]
\end{lemma}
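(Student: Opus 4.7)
The proof is essentially a pointwise Pythagorean decomposition, exploiting the two identities $|\nabla d_{E_1}|=1$ and $\nabla d_{E_1} = \nu_{E_1}\circ \pi_{\pa E_1}$ that hold throughout $\mathcal{N}_{r_{E_1}}(\pa E_1)$. The plan is to fix a point $x\in \pa E_2$ and split the Euclidean gradient $\nabla d_{E_1}(x)$ into its component tangent to $\pa E_2$ and its component normal to $\pa E_2$, then identify each piece.

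First, by the very definition of the tangential gradient along $\pa E_2$ (applied to the ambient $C^{1,1}$ function $d_{E_1}$, which is $C^1$ in $\mathcal{N}_{r_{E_1}}(\pa E_1)$ and in particular in a neighborhood of $x$), one has the orthogonal decomposition
\[
\nabla d_{E_1}(x) = \nabla_{\tau_2} d_{E_1}(x) + \bigl(\nabla d_{E_1}(x)\cdot \nu_{E_2}(x)\bigr)\,\nu_{E_2}(x),
\]
where $\nabla_{\tau_2} d_{E_1}(x) \perp \nu_{E_2}(x)$. Squaring and using $|\nabla d_{E_1}|\equiv 1$ (a standard property of signed distance functions, valid wherever $d_{E_1}$ is differentiable) yields
\[
\bigl(\nabla d_{E_1}(x)\cdot \nu_{E_2}(x)\bigr)^2 = 1 - |\nabla_{\tau_2} d_{E_1}(x)|^2.
\]
The hypothesis $\nabla d_{E_1}\cdot \nu_{E_2}>0$ on $\pa E_2$ then fixes the sign of the square root, giving $\nabla d_{E_1}\cdot \nu_{E_2} = \sqrt{1-|\nabla_{\tau_2} d_{E_1}|^2}$ at $x$.

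Substituting this back into the decomposition and then using the second identity of \eqref{def:proj}, namely $\nabla d_{E_1} = \nu_{E_1}\circ \pi_{\pa E_1}$ on $\mathcal{N}_{r_{E_1}}(\pa E_1) \supset \pa E_2$, we obtain
\[
\nu_{E_1}\circ \pi_{\pa E_1}(x) = \nabla_{\tau_2} d_{E_1}(x) + \sqrt{1-|\nabla_{\tau_2} d_{E_1}(x)|^2}\,\nu_{E_2}(x),
\]
which is exactly the claimed formula. Since $x\in \pa E_2$ was arbitrary, we are done.

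There is no genuine obstacle here: both the orthogonal decomposition and the eikonal relation $|\nabla d_{E_1}|=1$ are routine, and the sign assumption $\nabla d_{E_1}\cdot \nu_{E_2}>0$ is tailored precisely so that the positive square root is the correct one. The only subtlety worth flagging in the write-up is justifying that $d_{E_1}$ is differentiable at every $x\in \pa E_2$, which follows from the inclusion $\pa E_2 \subset \mathcal{N}_{r_{E_1}}(\pa E_1)$ together with the characterization of $r_E$ in \eqref{def:r-E}; this is what makes both the expression $\nabla_{\tau_2} d_{E_1}$ and the identity $\nabla d_{E_1}=\nu_{E_1}\circ \pi_{\pa E_1}$ meaningful there.
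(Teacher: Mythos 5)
Your proof is correct and coincides with the paper's argument: both decompose $\nabla d_{E_1}$ into its tangential and normal parts along $\pa E_2$, identify $\nabla d_{E_1}=\nu_{E_1}\circ\pi_{\pa E_1}$ via \eqref{def:proj}, and use the unit-length (eikonal) relation together with the sign hypothesis to pin down the normal coefficient as $\sqrt{1-|\nabla_{\tau_2}d_{E_1}|^2}$. No gaps; your remark on differentiability of $d_{E_1}$ on $\pa E_2$ is the same point the paper implicitly relies on.
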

\begin{proof}
By using the second identity of \eqref{def:proj} for $d_{E_1}$ as well as the definition of tangential gradient the following holds on $\pa E_2$
\[
\begin{split}
\nu_{E_1} \circ \pi_{\pa E_1} = \nabla d_{E_1} =P_{\pa E_2}  \nabla d_{E_1}  +  ( \nabla d_{E_1} \cdot \nu_{E_2}) \nu_{E_2} =\nabla_{\tau_2}  d_{E_1}  +  ( \nabla d_{E_1} \cdot \nu_{E_2}) \nu_{E_2} 
\end{split}
\] 
Since $|\nu_{E_1} \circ \pi_{\pa E_1}|=1=|\nu_{E_2}|$ and $\nabla_{\tau_2}  d_{E_1} \cdot \nu_{E_2} = 0$, 
then the previous decomposition implies $| \nabla d_{E_1} \cdot \nu_{E_2}|= \sqrt{1-|\nabla_{\tau_2} d_{E_1}|^2}$.
Thus, the claim follows from the assumption $\nabla d_{E_1} \cdot \nu_{E_2}>0$ on $\pa E_2$.
\end{proof}

The equality in the statement of  Lemma \ref{lem:maaginen} gives us a discrete analog for the second equality in \eqref{eq:flow-conti} as 
\beq \label{eq:maagi1}
\nu_{E_2} - \nu_{E_1} \circ \pi_{\pa E_1}= -\nabla_{\tau_2} d_{E_1}  + \frac{|\nabla_{\tau_2} d_{E_1}|^2}{1+\sqrt{1 - |\nabla_{\tau_2} d_{E_1}|^2}} \, \nu_{E_2} \ \ \text{on} \ \ \pa E_2.
\eeq
or equivalently
\beq \label{eq:maagi2}
\begin{split}
\nu_{E_2} - \nu_{E_1} \circ \pi_{\pa E_1}  = &-\left( \frac{1}{\sqrt{1 - |\nabla_{\tau_2} d_{E_1}|^2}} \right) \nabla_{\tau_2} d_{E_1} \\
&+  \frac{|\nabla_{\tau_2} d_{E_1}|^2}{\sqrt{1 - |\nabla_{\tau_2} d_{E_1}|^2} + 1 - |\nabla_{\tau_2} d_{E_1}|^2}  \, \nu_{E_1} \circ \pi_{\pa E_1}  \ \ \text{on} \ \ \pa E_2
\end{split}
\eeq
which will be useful later. We need yet one technical lemma related to  the projection $\pi_{\pa E_1}$ on the consequent boundary $\pa E_2$.  
\begin{lemma}
\label{lem:distance-compa}
Let $E_1,E_2 \subset \R^{n+1}$ be open and  bounded sets satisfying uniform ball condition.
If $\pa E_2 \subset \mathcal N_{r_{E_1}/2}(\pa E)$, then for any  $x,y \in \pa F$ satisfying $\pi_{\pa E_1}(x) \neq \pi_{\pa E_1}(y)$
it holds
\[
\begin{split}
&\Big| |\pi_{\pa E_1}(x)-\pi_{\pa E_1}(y)|^2 - |x-y|^2\Big| \\
 \leq C_0\|d_{E_1}\|_{L^\infty(\pa E_2)} &\left(\|S_{E_1}\|_{L^\infty} +  \|S_{E_2}\|_{L^\infty} + \|d_{E_1}\|_{L^\infty(\pa E_2)} \|S_{E_2}\|^2_{L^\infty} \right)|x-y|^2,
\end{split}
\]
where $C_0\geq 1$ is a universal constant.
\end{lemma}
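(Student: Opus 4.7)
First, using $x = p + d_x \nu_{E_1}(p)$ and $y = q + d_y \nu_{E_1}(q)$ from \eqref{def:proj} (with $p = \pi_{\pa E_1}(x)$, $q = \pi_{\pa E_1}(y)$, $d_x = d_{E_1}(x)$, $d_y = d_{E_1}(y)$), I would expand $|x-y|^2 - |p-q|^2$. Using the identities $(p-q)\cdot \nu_{E_1}(p) = S_{E_1}(p,q)|p-q|^2$, $(q-p)\cdot\nu_{E_1}(q) = -S_{E_1}(q,p)|p-q|^2$, and $2(1-\nu_{E_1}(p)\cdot\nu_{E_1}(q)) = |\nu_{E_1}(p) - \nu_{E_1}(q)|^2$, a direct computation yields
\[
|x-y|^2 - |p-q|^2 = 2\bigl(d_x S_{E_1}(p,q) + d_y S_{E_1}(q,p)\bigr)|p-q|^2 + (d_x-d_y)^2 + d_x d_y\,|\nu_{E_1}(p) - \nu_{E_1}(q)|^2.
\]
Lemma \ref{lemma:globalLip} (with $\rho = r_{E_1}/2$) gives $|p-q| \leq 2|x-y|$ and Lemma \ref{lem:S-E} gives $|\nu_{E_1}(p) - \nu_{E_1}(q)| \leq 2\|S_{E_1}\|_{L^\infty}|p-q|$. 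Combined with the inequality $\|d_{E_1}\|_{L^\infty(\pa E_2)}\|S_{E_1}\|_{L^\infty} \leq 1/4$ (which follows from $\pa E_2 \subset \mathcal N_{r_{E_1}/2}(\pa E_1)$ and Lemma \ref{lem:S-E}), the first and third terms above are together bounded by $C\|d_{E_1}\|_{L^\infty(\pa E_2)}\|S_{E_1}\|_{L^\infty}|x-y|^2$. The subtle contribution is $(d_x-d_y)^2$.

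The main ingredient for $(d_x-d_y)^2$ is the interpolation inequality in Lemma \ref{lemma:simpleinterpolation} applied to $u = d_{E_1}$ on $\pa E_2$. In $\mathcal N_{r_{E_1}/2}(\pa E_1)$ the formula \eqref{eq:dist-3} combined with the Lipschitz estimate for $\nu_{E_1}$ from Proposition \ref{prop:unifball-reg} gives $|\nabla^2 d_{E_1}|_{\op} \leq 4\|S_{E_1}\|_{L^\infty}$, while Lemma \ref{lem:S-E} supplies $1/r_{E_2} = 2\|S_{E_2}\|_{L^\infty}$. Setting $A := \|\nabla_{\tau_2} d_{E_1}\|_{L^\infty(\pa E_2)}$, and using a standard smooth approximation to justify applying the lemma to $d_{E_1}\in C^{1,1}$ rather than $C^2$, one obtains the quadratic inequality $A^2 \leq 4\|d_{E_1}\|_{L^\infty(\pa E_2)}\bigl(4\|S_{E_1}\|_{L^\infty} + 2\|S_{E_2}\|_{L^\infty}A\bigr)$, whose solution is
\[
A^2 \leq C\bigl(\|d_{E_1}\|_{L^\infty(\pa E_2)}\|S_{E_1}\|_{L^\infty} + \|d_{E_1}\|_{L^\infty(\pa E_2)}^2\|S_{E_2}\|_{L^\infty}^2\bigr).
\]

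Next I would transform this gradient bound into a bound on $(d_x-d_y)^2$ via a case split. If $|x-y| \leq r_{E_2}/4$, Proposition \ref{prop:unifball-reg} applied to $E_2$ provides a local $C^{1,1}$-graph description of $\pa E_2$, and by lifting the straight segment between the base points I obtain a curve $\gamma \subset \pa E_2$ from $x$ to $y$ of length at most $\sqrt{2}|x-y|$; along $\gamma$ the function $d_{E_1}$ is $C^{1,1}$ and $\nabla d_{E_1}\cdot\gamma' = \nabla_{\tau_2}d_{E_1}\cdot\gamma'$, so integration yields $|d_x - d_y| \leq \sqrt{2}\,|x-y|\,A$ and hence $(d_x-d_y)^2 \leq 2A^2|x-y|^2$. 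In the complementary case $|x-y| > r_{E_2}/4 = 1/(8\|S_{E_2}\|_{L^\infty})$, I would use the trivial estimate $(d_x-d_y)^2 \leq 4\|d_{E_1}\|_{L^\infty(\pa E_2)}^2$ together with $1 \leq 64\|S_{E_2}\|_{L^\infty}^2|x-y|^2$ to obtain $(d_x-d_y)^2 \leq 256\|d_{E_1}\|_{L^\infty(\pa E_2)}^2\|S_{E_2}\|_{L^\infty}^2|x-y|^2$, which is absorbed by the $\|d_{E_1}\|_{L^\infty(\pa E_2)}^2\|S_{E_2}\|_{L^\infty}^2$ contribution. Collecting the three pieces proves the claim.

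The hard part will be the $(d_x-d_y)^2$ term: a naive second-order Taylor expansion of $d_{E_1}$ along the ambient segment from $x$ to $y$ produces residual terms of order $(\|S_{E_1}\|_{L^\infty}+\|S_{E_2}\|_{L^\infty})^2|x-y|^4$ that do not fit the linear-in-$|x-y|^2$ shape on the right-hand side, forcing the path-on-$\pa E_2$ argument. The conceptual subtlety is that pointwise $|\nabla_{\tau_2}d_{E_1}|$ need not be small merely because $|d_{E_1}|$ is small on $\pa E_2$ (the two boundaries could oscillate sharply against one another), and it is precisely Lemma \ref{lemma:simpleinterpolation} that converts the $L^\infty$-smallness of $d_{E_1}$, in the presence of the $C^{1,1}$-bounds supplied by the uniform ball conditions on both $E_1$ and $E_2$, into an $L^\infty$-bound on the tangential gradient with exactly the dependence on $\|d_{E_1}\|_{L^\infty(\pa E_2)}$ needed for the lemma.
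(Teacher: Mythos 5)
Your proof is correct and follows essentially the same route as the paper: the identical expansion of $|x-y|^2-|\pi(x)-\pi(y)|^2$ into the $S_{E_1}$-cross terms, the normal-difference term and the $(d_x-d_y)^2$ term, the same key use of Lemma \ref{lemma:simpleinterpolation} to bound $\|\nabla_{\tau_2}d_{E_1}\|_{L^\infty(\pa E_2)}^2$, and the same case split on $|x-y|$ versus $r_{E_2}$. The only (harmless) deviations are that you solve the resulting quadratic inequality in $\|\nabla_{\tau_2}d_{E_1}\|_{L^\infty}$ instead of simply using $\|\nabla_{\tau_2}d_{E_1}\|_{L^\infty}\leq 1$, and that in the small-$|x-y|$ case you integrate along a lifted graph curve on $\pa E_2$ rather than, as the paper does, along the straight segment after extending $d_{E_1}|_{\pa E_2}$ to a tube via composition with $\pi_{\pa E_2}$.
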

\begin{proof} 
First, we obtain from \eqref{def:proj} and the definition of $S_{E_1}$ that
\[
\begin{split}
|\pi_{\pa E_1}(x) -\pi_{\pa E_1}(y)|^2 - |x-y|^2 =&  -2 d_{E}(x) S_{E_1}(\pi_{\pa E_1}(x),\pi_{\pa E_1}(y))|\pi_{\pa E_1}(x) -\pi_{\pa E_1}(y)|^2 \\
&- 2 d_{E_1}(y) S_{E_1}(\pi_{\pa E_1}(y),\pi_{\pa E_1}(x))|\pi_{\pa E_1}(x) -\pi_{\pa E_1}(y)|^2  \\
&- \big|d_{E_1}(x) (\nu_{E_1}\circ\pi_{\pa E_1})(x)  - d_{E_1}(y) (\nu_{E_1} \circ \pi_{\pa E_1})(y) \big|^2. 
\end{split}
\]
Thus,
\begin{align*}
 \Big| |\pi_{\pa E_1}(x)-\pi_{\pa E_1}(y)|^2 - |x-y|^2\Big| 
\leq 4  &\|d_{E_1}\|_{L^\infty(\pa E_2)}  \|S_{E_1}\|_{L^\infty}|\pi_{\pa E_1}(x) -\pi_{\pa E_1}(y)|^2 \\
+ 2 &|d_{E_1}(x) |^2 |(\nu_{E_1} \circ \pi_{\pa E_1})(x)  - (\nu_{E_1} \circ \pi_{\pa E_1})(y) |^2 + 2 |d_{E_1}(x) -d_{E_1}(y)|^2 \\
\leq  4  &\|d_{E_1}\|_{L^\infty(\pa E_2)}  \|S_{E_1}\|_{L^\infty}|\pi_{\pa E_1}(x) -\pi_{\pa E_1}(y)|^2 \\
+ 2 &\|d_{E_1}\|_{L^\infty(\pa E_2)}^2 |(\nu_{E_1} \circ \pi_{\pa E_1})(x)  - (\nu_{E_1} \circ \pi_{\pa E_1})(y) |^2 \\
+ 2 &|d_{E_1}(x) -d_{E_1}(y)|^2.
\end{align*}
The normal $\nu_{E_1}$ is $1/r_{E_1}$-Lipschitz continuous by Proposition \ref{prop:unifball-reg} and $\pi_{\pa E_1}$ is 
$2$-Lipschitz continuous in $\mathcal N_{r_{E_1}/2}(\pa E_1)$ by Lemma \ref{lemma:globalLip}. On the other hand, recalling Lemma \ref{lem:S-E} we conclude 
$\|d_{E_1}\|_{L^\infty(\pa E_2)}\|S_{E_1}\|_{L^\infty} \leq 1/4$. Hence, we infer from previous estimate 
\beq
\label{distance-compa1}
\Big| |\pi_{\pa E_1}(x)-\pi_{\pa E_1}(y)|^2 - |x-y|^2\Big| \leq   24 \|d_{E_1}\|_{L^\infty(\pa E_2)} \|S_{E_1}\|_{L^\infty}\, |x-y|^2 
+2 |d_{E_1}(x) -d_{E_1}(y)|^2.
\eeq
Thus, we are remain to estimate the term $|d_{E_1}(x) -d_{E_1}(y)|^2$ on the boundary $\pa E_2$.
We divide this into two cases. First, suppose that $|x-y|\geq r_{E_2}/2$. Then using Lemma \ref{lem:S-E} we obtain
\beq
\label{distance-compa2}
|d_{E_1}(x)-d_{E_1}(y)|^2 \leq \frac{4 \|d_{E_1}\|_{L^\infty(\pa E_2)}^2}{r_{E_2}^2} |x-y|^2 \leq
16 \|d_{E_1}\|_{L^\infty(\pa E_2)}^2 \|S_{E_2}\|_{L^\infty}^2|x-y|^2.
\eeq
Suppose then $|x-y|<r_{E_1}/2$. We define a $C^1$-extension $\tilde d_{E_1} : \mathcal N_{r_{E_2}}(\pa E_2) \rightarrow \R$ of the restriction $d_{E_1}|_{\pa E_2}$ by setting $\tilde d_{E_1} = d_{E_1} \circ \pi_{\pa E_2}$. Then 
$\nabla \tilde d_{E_1} = \nabla \pi_{\pa E_2} \nabla_{\tau_2} d_{E_1} \circ \pi_{\pa E_2}$ and
by Lemma  \ref{lemma:globalLip} $|\nabla \pi_{\pa E_2}|_\op \leq 2$ in $\mathcal N_{r_{E_2}/2}(\pa E_2)$ so $|\nabla \tilde d_{E_1}| \leq 2 \|\nabla_{\tau_2} \tilde d_{E_1}\|_{L^\infty(\pa E_2)}$ there.
Since the line segment $J_{yx}$ belongs to $\mathcal N_{r_{E_2}/2}(\pa E_2)$, we have
\beq
\label{distance-compa3}
|d_{E_1}(x) - d_{E_1}(y)|^2 \leq 4 \|\nabla_{\tau_2} d_{E_1}\|_{L^\infty(\pa E_2)}^2|x-y|^2.
\eeq
By Lemma \ref{lemma:globalLip} we have $|\nabla^2 d_{E_1}|_\op \leq 2/r_{E_1}$ in $\mathcal N_{r_{E_1}}(\pa E_1)$. Therefore, by using 
Lemma \ref{lemma:simpleinterpolation} and Lemma \ref{lem:S-E} we get an estimate
\begin{align}
\notag
\|\nabla_{\tau_2} d_{E_1}\|_{L^\infty(\pa E_2)}^2 
&\leq 4\|d_{E_1}\|_{L^\infty(\pa E_2)}\left(\sup_{\pa E_2} |\nabla^2 d_{E_1}|_{\op} + \frac{\|\nabla_\tau d_{E_1}\|_{L^\infty(\pa E_2)}}{r_{E_2}}\right) \\
\label{gradinterpolation}
&\leq 16\|d_{E_1}\|_{L^\infty(\pa E_2)}\left( \|S_{E_1}\|_{L^\infty} + \|S_{E_2}\|_{L^\infty}\right).
\end{align}
Thus, we gather the estimate as claimed from \eqref{distance-compa1}, \eqref{distance-compa2}, \eqref{distance-compa3}
and the estimate above.
\end{proof}

We are now ready prove an analogous estimate to \eqref{est:flow-conti-derivative-control}  in the discrete setting.  

\begin{lemma}
\label{lem:2-point-arg}
Assume that $E_1 \subset \R^{n+1}$ is an open and bounded set, with $|E_1| = m_0$, which satisfies  uniform ball condition with
 radius $r_0 \in \R_+$. Let $E_2$ be any minimizer of the energy $\mathcal{F}_h( \ \cdot \ , E_1)$ defined in \eqref{min mov}. 
Then there is  $h_0=h_0(n,m_0,r_0)$ such that for $h \leq h_0$  $E_2$ is $C^3$-regular and 
\[
\frac{\|S_{E_2}\|_{L^\infty} -\|S_{E_1}\|_{L^\infty}}{h} \leq C_n \|S_{E_1}\|_{L^\infty}^3.
\]
If in addition $E_1$ is $C^k$-regular, then $E_2$ is $C^{k+2}$-regular.
\end{lemma}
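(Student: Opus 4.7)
The $C^3$-regularity (respectively $C^{k+2}$-regularity) of $E_2$ together with the classical validity of the Euler--Lagrange equation \eqref{euler} on $\pa E_2$ follow immediately from Lemma \ref{lemma:improved-regularity}, so the substantive content of the lemma is the discrete evolution inequality. My plan is to mirror the continuous computation that leads to \eqref{est:flow-conti-derivative-control}: work at a critical point of the $\eps$-regularised two-point function $S_{E_2,\eps}$, apply the maximum/minimum principle in the form \eqref{est:3rdorderbound} of Lemma \ref{lem:S-E-max}, and then convert the curvature quantities into distance terms by the Euler--Lagrange equation and the normal identity of Lemma \ref{lem:maaginen}. Concretely, I would fix a small $\eps>0$ with $\sqrt\eps<r_{E_2}$, pick a maximiser $(x,y)\in\pa E_2\times\pa E_2$ of $S_{E_2,\eps}$ (the minimiser is symmetric), and assume $\|S_{E_2,\eps}\|_{L^\infty}>\|S_{E_1}\|_{L^\infty}$, since otherwise the inequality is immediate. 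Then $x\neq y$ and \eqref{est:3rdorderbound} applies.

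Substituting $H_{E_2}=\lambda^h-d_{E_1}/h$ from \eqref{euler} into \eqref{est:3rdorderbound} rewrites the left-hand side as
\[
\frac{1}{h}\,\frac{-\nabla_{\tau_2} d_{E_1}(x)\cdot(x-y)+d_{E_1}(x)-(\nu_{E_2}(x)\cdot\nu_{E_2}(y))\,d_{E_1}(y)}{|x-y|^2+\eps}+\frac{(\nu_{E_2}(x)\cdot\nu_{E_2}(y)-1)\lambda^h}{|x-y|^2+\eps}.
\]
The rightmost term is bounded in absolute value by $C|\lambda^h|\|S_{E_2,\eps}\|_{L^\infty}^2$ and, thanks to the uniform Lagrange multiplier bound in Proposition \ref{prop:distance-bound} and the universal lower bound \eqref{eq:S-E-low}, can be absorbed into a $C\|S_{E_2,\eps}\|_{L^\infty}^3$ error. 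Next, setting $x'=\pi_{\pa E_1}(x)$, $y'=\pi_{\pa E_1}(y)$, the heart of the proof is to identify the first summand above with the discrete increment $h^{-1}(S_{E_2,\eps}(x,y)-S_{E_1,\tilde\eps}(x',y'))$ for a suitably chosen $\tilde\eps$, modulo cubic error. For this I would use the form \eqref{eq:maagi2} of Lemma \ref{lem:maaginen} to expand $\nu_{E_2}$ in terms of $\nu_{E_1}\circ\pi_{\pa E_1}$ and $\nabla_{\tau_2}d_{E_1}$, decompose $x-y=(x'-y')+d_{E_1}(x)\nu_{E_1}(x')-d_{E_1}(y)\nu_{E_1}(y')$ and compare $|x-y|^2$ with $|x'-y'|^2$ by Lemma \ref{lem:distance-compa}.

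The uniform estimates that feed this expansion are the linear distance bound $\|d_{E_1}\|_{L^\infty(\pa E_2)}\le C_n h/r_0$ from Proposition \ref{prop:distance-bound} and the tangential gradient bound $\|\nabla_{\tau_2}d_{E_1}\|_{L^\infty(\pa E_2)}^2\le Ch\bigl(\|S_{E_1}\|_{L^\infty}+\|S_{E_2}\|_{L^\infty}\bigr)$ obtained by combining the interpolation estimate \eqref{gradinterpolation} with the same distance bound; together they ensure that every correction produced by the expansion carries an extra power of $h$, contributing $O(h\|S\|^3)$ after division by $h$. Using $S_{E_1,\tilde\eps}(x',y')\le \|S_{E_1}\|_{L^\infty}$ then yields $(\|S_{E_2,\eps}\|_{L^\infty}-\|S_{E_1}\|_{L^\infty})/h\le C\|S_{E_2,\eps}\|_{L^\infty}^3$; a short bootstrap replaces $\|S_{E_2,\eps}\|_{L^\infty}$ by $\|S_{E_1}\|_{L^\infty}$ on the right whenever $h$ is so small that $\|S_{E_2,\eps}\|_{L^\infty}\le 2\|S_{E_1}\|_{L^\infty}$, and letting $\eps\downarrow 0$ concludes the proof. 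The principal difficulty is the bookkeeping in the expansion: each appearance of $\nabla_{\tau_2}d_{E_1}$ or of the angle defect $1-\nu_{E_2}(x)\cdot\nu_{E_2}(y)$ must be shown to produce an extra factor of $\|d_{E_1}\|_{L^\infty(\pa E_2)}$ so that the nonlinear corrections scale like $h\|S\|^3$ rather than $\|S\|^2$ after dividing by $h$; this is precisely where the fact that the normal identity of Lemma \ref{lem:maaginen} lives on the \emph{new} boundary $\pa E_2$, parametrised via $d_{E_1}$, becomes essential.
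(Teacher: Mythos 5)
Your proposal follows essentially the same route as the paper's proof: $\eps$-regularisation of the two-point function, the maximum principle \eqref{est:3rdorderbound} at an extremum of $S_{E_2,\eps}$, the Euler--Lagrange substitution combined with Lemma \ref{lem:maaginen}, Lemma \ref{lem:distance-compa} and the gradient bound \eqref{gradinterpolation} to reduce the discrete increment to cubic errors, and the final bootstrap using the a priori bound $\|S_{E_2}\|_{L^\infty}\le C h^{-1/3}$ before letting $\eps\downarrow 0$. The one point to watch is that the lemma asserts a \emph{dimensional} constant $C_n$: absorbing the term $(\nu_{E_2}(x)\cdot\nu_{E_2}(y)-1)\lambda^h$ via $|\lambda^h|\le C_0(n,m_0,r_0)$ and the lower bound \eqref{eq:S-E-low} would only give a constant depending on $m_0$ and $r_0$, so one should instead estimate $|\lambda^h|\le \|d_{E_1}\|_{L^\infty(\pa E_2)}/h+\|H_{E_2}\|_{L^\infty(\pa E_2)}\le C_n\bigl(\|S_{E_1}\|_{L^\infty}+\|S_{E_2}\|_{L^\infty}\bigr)$, as the paper does.
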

\begin{proof} As previously, $C=C(n,m_0,r_0) >0$ may change from line to line. We find $h_0=h_0(n,m_0,r_0)\in \R_+$ such that assuming $h \leq h_0$ implies that the conclusions of
Proposition \ref{prop:distance-bound},  Lemma \ref{lemma:improved-regularity} and Remark \ref{rem:improved-regularity} are valid. Let us quickly summarize what we have achieved so far.
First, $E_2$ is open and bounded, $C^3$-regular set, or $C^{k+2}$-regular set provided that $E_1$ is $C^k$-regular, and it satisfies uniform ball condition with radius $c_0h^{1/3}$ for a constant $c_0=c_0(n,m_0,r_0) >0$.
Hence, by Lemma \ref{lem:S-E} we have apriori estimate
\beq
\label{2-point-arg-1}
\|S_{E_2}\|_{L^\infty} \leq C h^{-\frac13}.
\eeq
Second, $\pa E_2$ is ``close" to $\pa E_1$. To be more precise, we have $\|d_{E_1}\|_{L^\infty(E_2)} \leq C_n h /r_0$
and we may assume that $\pa E_2 \subset \mathcal N_{r_0/2}(\pa E_1)$.
Moreover, it holds that $\nabla d_{E_1} \cdot \nu_{E_2} > 0$ on $\pa E_2$ and $\pi_{\pa E_1}$ is injective on $\pa E_2$.
Third, we have the Euler-Lagrange equation \eqref{euler} on $\pa E_2$ in the classical sense.
 
Thus, we assume that $h \leq h_0$. We might need to shrink $h_0$ but always in a way that we preserve the dependency 
$h_0=h_0(n,m_0,r_0)$. By combining the estimate $\|d_{E_1}\|_{L^\infty(E_2)} \leq C_n h /r_0$ from Proposition \ref{prop:distance-bound} with Lemma \ref{lem:S-E} and \eqref{2-point-arg-1} and by possibly shrinking $h_0$ we obtain
\beq
\label{2-point-arg-2}
\frac{\|d_{E_1}\|_{L^\infty(E_2)}}{h} \leq C_n \|S_{E_1}\|_{L^\infty} \quad \text{and} \quad \|S_{E_2}\|_{L^\infty} \|d_{E_1}\|_{L^\infty(E_2)} \leq 1.
\eeq 
Then, by \eqref{euler}, Lemma \ref{lem:S-E} and the first estimate in \eqref{2-point-arg-2}, the Lagrange multiplier $\lambda^h$ can be controlled as 
\beq
\label{2-point-arg-2b}
|\lambda^h| \leq  \frac{\|d_{E_1}\|_{L^\infty(E_2)}}{h} + \|H_{E_2}\|_{L^\infty(\pa E_2)} \leq C_n( \|S_{E_1}\|_{L^\infty}+ \|S_{E_2}\|_{L^\infty}).
\eeq
The claim follows once we show
\beq
\label{2-point-arg-2c}
\frac{\|S_{E_2}\|_{L^\infty} -\|S_{E_1}\|_{L^\infty}}{h} \leq C_n\left(\|S_{E_1}\|_{L^\infty}^3+\|S_{E_2}\|_{L^\infty}^3\right).
\eeq
Indeed, assuming the above holds true we have by Lemma \ref{lem:S-E} and \eqref{2-point-arg-1}
\[
\|S_{E_2}\|_{L^\infty} -\|S_{E_1}\|_{L^\infty} \leq  C_n r_0^{-3} h + C h^\frac13 \|S_{E_2}\|_{L^\infty} 
\]
and, hence, recalling \eqref{eq:S-E-low} and shrinking $h_0$, if neccessary, we obtain 
$\|S_{E_2}\|_{L^\infty} \leq 2\|S_{E_1}\|_{L^\infty}$. Thus, reiterating the previous inequality via \eqref{2-point-arg-2c} yields the claim.

To prove \eqref{2-point-arg-2c}, we initially fix any $\eps < r_{E_2}^2$ and choose $(x,y) \in  \pa E_2 \times \pa E_2$
such that $|S_{E_2,\eps}(x,y)|=\|S_{E_2,\eps}\|_{L^\infty}$. 
Since $\|S_{E_2,\eps}\|_{L^\infty}>0$, then $x \neq y$ and, hence, the injectivity of $\pi_{\pa E_1}$ on $\pa E_2$ ensures
that $\pi_{\pa E_1}(x) \neq \pi_{\pa E_1}(x)$. In order to simplify our notations, we write 
$\pi=\pi_{\pa E_1}$ and $H_2 = H_{E_2}$ for short.
By using the definition in \eqref{def:S-E-eps}, the identities \eqref{def:proj} and \eqref{eq:maagi1} as well as the Euler-Lagrange equation we may decompose the difference quotient  as
\begin{align}
\notag
\frac{1}{h} \big(S_{E_2,\eps}&(x,y) -S_{E_1,\eps}(\pi(x),\pi(y))\big)  \\
\notag
&=\frac{(x-y) \cdot \nabla_{\tau_2} H_2(x)}{|x-y|^2+ \eps} + \frac{\left( \nu_{E_1} (x) \cdot \nu_{E_2}(y) \right)H_2(y) - H_2 (x)}{|x-y|^2+ \eps} \\
\label{2-point-arg-3}
 & \  +  \frac1h\frac{|\nabla_{\tau_2} d_{E_1}(x) |^2}{1+\sqrt{1 - |\nabla_{\tau_2} d_{E_1}(x)|^2}}S_{E_2,\eps}(x,y)+  \left( \lambda^h - \frac{d_{E_1}(y)}{2h}\right)\frac{|\nu_{E_1}(x)- \nu_{E_1}(y) |^2}{|x-y|^2+ \eps} \\
\notag
& \ + \frac{d_{E_1}(y)}{2h} \frac{|\nu_{E_1}(\pi(x))- \nu_{E_1}(\pi(y)) |^2}{|x-y|^2+ \eps}
+\frac1h\left(\frac{|\pi(x) -\pi(y)|^2- |x-y|^2}{|x-y|^2+\eps} \right) S_{E_1,\eps}(\pi(x),\pi(y)) .
\end{align}
Next, we estimate the last four terms on the RHS.
First, since  $\pa E_2 \subset \mathcal N_{r_0/2}(\pa E) \subset \mathcal N_{r_{E_1}/2}(\pa E_1)$, we have the estimate \eqref{gradinterpolation} for $\|\nabla_{\tau_2} d_{E_1}\|_{L^\infty(\pa E_2)}^2$ and, hence, recalling the first estimate in
\eqref{2-point-arg-2}
we have
\beq
\label{2-point-arg-4}
\begin{split}
\left|\frac1h\frac{|\nabla_{\tau_2} d_{E_1}(x) |^2}{1+\sqrt{1 - |\nabla_{\tau_2} d_{E_1}(x)|^2}}S_{E_2,\eps}(x,y)\right|
\leq \, & \frac{16\|d_{E_1}\|_{\pa E_2}}{h}\left( \|S_{E_1}\|_{L^\infty}\|S_{E_2}\|_{L^\infty} + \|S_{E_2}\|_{L^\infty}^2\right) \\
\leq \, & C_n\left(\|S_{E_1}\|_{L^\infty}^3 + \|S_{E_2}\|_{L^\infty}^3\right) .
\end{split}
\eeq
For the next term, we use  Lemma \ref{lem:S-E}, the first estimate in \eqref{2-point-arg-2} and \eqref{2-point-arg-2b} to obtain
\beq
\label{2-point-arg-5}
\begin{split}
\left| \left( \lambda^h - \frac{d_{E_1}(y)}{2h}\right)\frac{|\nu_{E_1}(x)- \nu_{E_1}(y) |^2}{|x-y|^2+ \eps}\right|
\leq \, & C_n\left(|\lambda^h| + \|S_{E_1}\|_{L^\infty(\pa E_1)}\right)  \|S_{E_2}\|^2_{L^\infty(\pa E_2)}\\
\leq \, & C_n\left(\|S_{E_1}\|_{L^\infty(\pa E_1)}^3 + \|S_{E_2}\|_{L^\infty(\pa E_2)}^3\right).
\end{split}
\eeq
By Proposition \ref{prop:unifball-reg}  $\nu_{E_1}$ is $1/r_0$-Lipschitz and by Lemma \ref{lemma:globalLip} $\pi$ is 2-Lipschitz continuous in $\mathcal N_{r_0/2}(\pa E_1)$. Thus, by Lemma \ref{lem:S-E} and  the first inequality in \eqref{2-point-arg-2}, we estimate the second last term as
\beq
\label{2-point-arg-6}
\begin{split}
\left| \frac{d_{E_1}(y)}{2h} \frac{|\nu_{E_1}(\pi(x))- \nu_{E_1}(\pi(y)) |^2}{|x-y|^2+ \eps}\right|
\leq \, & C_n \|S_{E_1}\|_{L^\infty} \frac{1}{r_0^2}  \frac{|\pi(x)- \pi(y) |^2}{|x-y|^2} \\
\leq \, & C_n \|S_{E_1}\|^3_{L^\infty}.
\end{split}
\eeq
Finally, by using Lemma \ref{lem:distance-compa} and the identities in \eqref{2-point-arg-2} we have
\beq
\label{2-point-arg-7}
\begin{split}
     &\left|\frac1h\left(\frac{|\pi(x) -\pi(y)|^2- |x-y|^2}{|x-y|^2+\eps} \right) S_{E_1,\eps}(\pi(x),\pi(y))\right| \\
\leq &C_n\frac{\|d_{E_1}\|_{L^\infty(\pa E_2)}}{h} \left(\|S_{E_1}\|_{L^\infty} +  \|S_{E_2}\|_{L^\infty} + \|d_{E_1}\|_{L^\infty(\pa E_2)} \|S_{E_2}\|^2_{L^\infty} \right)\|S_{E_1}\|_{L^\infty} \\
\leq &C_n\left(\|S_{E_1}\|_{L^\infty}^3 + \|S_{E_2}\|_{L^\infty}^3\right). 
\end{split}
\eeq
We infer from \eqref{2-point-arg-3},  \eqref{2-point-arg-4},  \eqref{2-point-arg-5},  \eqref{2-point-arg-6} and  \eqref{2-point-arg-7} the expression
\begin{align}
\notag
 \frac{S_{E_2,\eps}(x,y) -S_{E_1,\eps}(\pi(x),\pi(y))}{h} =\frac{(x-y) \cdot \nabla_{\tau_2} H_2(x)}{|x-y|^2+ \eps} + \frac{\left( \nu_{E_1} (x) \cdot \nu_{E_2}(y) \right)H_2(y) - H_2 (x)}{|x-y|^2+ \eps} + R,
\end{align}
where for the remainder term it holds $|R| \leq C_n\left( \|S_{E_1}\|_{L^\infty}^3 + \|S_{E_2}\|_{L^\infty}^3\right)$. Since $(x,y)$ is a maximum (or minimum) point for $S_{E_2,\eps}$, then we conclude from \eqref{est:3rdorderbound}
\[
\frac{\|S_{E_2,\eps}\|_{L^\infty} -\|S_{E_1,\eps}\|_{L^\infty}}{h} \leq C_n\left( \|S_{E_1}\|_{L^\infty}^3 + \|S_{E_2}\|_{L^\infty}^3\right).
\]
Since now $\|S_{E_i,\eps}\|_{L^\infty}  \uparrow \|S_{E_i,\eps}\|_{L^\infty}$ for $i=1,2$ as $\eps$ tends to zero,
the above yields  \eqref{2-point-arg-2c} and we conclude the proof.
\end{proof}

We may now prove the main result of this section which is the uniform ball condition estimate for the approximative flat flow. 
\begin{theorem}
\label{thm2}
Let $E_0 \subset \R^{n+1}$  be an open and bounded set which satisfies uniform ball condition with radius $r_0\in \R_+$ and
let $m_0$ denote its volume. There are $h_0=h_0(n,m_0,r_0) \in \R_+$ and  $T_0=T_0(n,r_0) \in \R_+$ such that if $h \leq h_0$, then any approximative flat flow  $(E_t^h)_{t \geq 0}$  of \eqref{eq:VMCF} starting from $E_0$  satisfies uniform ball condition with radius $r_0/2$ for all $t \leq T_0$. Moreover, $E^h_t$ is $C^{1+2\lfloor t/h\rfloor}$-regular for every $0 \leq t \leq T_0$.
\end{theorem}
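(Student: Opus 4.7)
The plan is to iterate the discrete differential inequality of Lemma \ref{lem:2-point-arg} and combine it with the identification between $\|S_E\|_{L^\infty}$ and the uniform ball condition radius from Lemma \ref{lem:S-E}. For a fixed approximative flat flow $(E^h_t)_{t \geq 0}$, I would set $a_k := \|S_{E^h_{kh}}\|_{L^\infty}$. By Lemma \ref{lem:S-E}, $E^h_{kh}$ satisfies UBC with radius $r_0/2$ exactly when $a_k \leq 1/r_0$, while the initial UBC with radius $r_0$ forces $a_0 \leq 1/(2r_0)$. The task therefore reduces to finding $h_0,T_0$ such that $h \leq h_0$ implies $a_k \leq 1/r_0$ for every $k$ with $kh \leq T_0$.

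The induction on $k$ proceeds as follows. Fix $h_0 := h_0(n,m_0,r_0/2)$ provided by Lemma \ref{lem:2-point-arg}, let $h \leq h_0$, and assume inductively that $a_j \leq 1/r_0$ for every $j \leq k$. Then each $E^h_{jh}$ satisfies UBC with radius at least $r_0/2$ and, by Proposition \ref{prop:distance-bound}, has volume $m_0$, so Lemma \ref{lem:2-point-arg} applies to the pair $(E^h_{kh},E^h_{(k+1)h})$ and yields
\[
a_{k+1} \leq a_k + C_n h\, a_k^3.
\]
Setting $b_k := a_k^{-2}$ and using the elementary inequality $(1+x)^{-2} \geq 1-2x$ for $x \geq 0$, this becomes $b_{k+1} \geq b_k - 2 C_n h$, hence $b_k \geq b_0 - 2 C_n k h$. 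With the choice $T_0 := 3 r_0^2/(2 C_n)$ and the initial bound $b_0 \geq 4 r_0^2$, any $k$ with $kh \leq T_0$ yields $b_k \geq 4 r_0^2 - 3 r_0^2 = r_0^2$, that is $a_k \leq 1/r_0$, closing the induction. Since the approximative flow is constant on each interval $[kh,(k+1)h)$, UBC with radius $r_0/2$ holds throughout $[0,T_0]$.

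The regularity statement follows directly from the second part of Lemma \ref{lem:2-point-arg}: $E^h_0 = E_0$ is $C^{1,1}$, and every subsequent minimization step raises the regularity by two, giving $E^h_t \in C^{1 + 2\lfloor t/h\rfloor}$ throughout $[0,T_0]$. The main obstacle is of a bootstrap nature: Lemma \ref{lem:2-point-arg} can be applied with an $h_0$ depending only on $n,m_0,r_0$ only so long as one already knows UBC with radius at least $r_0/2$ at the relevant step, which is precisely what the induction hypothesis provides; without this uniformity, the cubic term on the right-hand side of the Bernoulli-type inequality would be out of control. The discrete inequality $a_{k+1} \leq a_k + C_n h a_k^3$ is the sharp discretization of the ODE $a' = C_n a^3$, whose blow-up time is of order $1/a_0^2 \simeq r_0^2$, and this sets the correct scaling of $T_0$.
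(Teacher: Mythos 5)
Your proposal is correct and follows essentially the same route as the paper: identify $\|S_{E^h_{kh}}\|_{L^\infty}$ with the UBC radius via Lemma \ref{lem:S-E}, iterate the discrete inequality of Lemma \ref{lem:2-point-arg} by induction (using the induction hypothesis to keep the radius parameter in that lemma fixed at $r_0/2$), and read off the regularity from the last part of Lemma \ref{lem:2-point-arg}. The only difference is cosmetic: the paper closes the recursion by crudely bounding $a_k^3 \le r_0^{-3}$ to get a linear iteration with $T_0 = r_0^2/(2C_n)$, while you use the sharper Bernoulli substitution $b_k = a_k^{-2}$, yielding the slightly larger $T_0 = 3r_0^2/(2C_n)$.
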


\begin{proof} By a slight  abuse of notation, we set $h_0$ to be as in Lemma \ref{lem:2-point-arg} for the parameters $n$, $m_0$ and $r_0/2$.
Then we choose 
\beq
\label{eq:thm2-1}
T_0 = \frac{r_0^2}{4C_n}\, , 
\eeq
where the dimensional constant is the same as in Lemma \ref{lem:2-point-arg}. We assume that $h \leq h_0$ and consider
  an approximative flat flow  $(E_t^h)_{t \geq 0}$ starting from $E_0$ obtained via the minimizing movements scheme \eqref{app flow}. We may assume $h \leq T_0$, since otherwise the proof is trivial.
Since $E_0$ satisfies uniform ball condition with radius $r_0$, we have by Lemma \ref{lem:S-E} that $\|S_{E_0}\|_{L^\infty}
 =1/(2r_0)$. Then we set
\[
K=\sup \left\{ k \in \N : E_t^h \ \ \text{satisfies  uniform ball condition with} \ \ \|S_{E_{lh}^h}\|_{L^\infty}  \leq \frac{1}{r_0}
\ \ \text{for} \ \ 0 \leq l \leq k \right\}. 
\]  
Note that if $E_{kh}^k$ is a bounded set satisfying  uniform ball condition with $\|S_{E_k^h}\|_{L^\infty}  \leq 1 /r_0$, then
thanks to Lemma \ref{lem:S-E} we know that it satisfies uniform ball condition with  radius $r_0/2$. Thus, it follows from the construction of 
$(E^h_t)_{t \geq 0}$, the choice of $h_0$, and Lemma \ref{lem:2-point-arg} that we have $E_{(k+1)h}^h$ is a bounded $C^3$-regular set satisfying
\[
\|S_{E_{(k+1)h}^h}\|_{L^\infty} \leq  \|S_{E_k^h}\|_{L^\infty} + C_n h\|S_{E_k^h}\|_{L^\infty}^3 \leq  \|S_{E_k^h}\|_{L^\infty} + C_nr_0^{-3} h.
\] 
Since $h_0 \leq T_0$, then the choices in \eqref{eq:thm2-1} imply that $K$ is well-defined. By summing the above from $k=0$ to $k=K$ we obtaon
\[
\frac{1}{r_0} \leq \|S_{E_{(K+1)h}^h}\|_{L^\infty} \leq \|S_{E_0}\|_{L^\infty} + \frac{C_n}{r_0^3} (K+1)h = \frac{1}{2r_0} + \frac{C_n}{r_0^3} (K+1)h.
\]
This yields $K \geq \lfloor T_0/h \rfloor$ and, hence, it follows from the construction \eqref{app flow} that $E^h_t$ satisfies uniform ball condition with radius $r_0/2$ for every $0 \leq t \leq T_0$. The last claim then follows 
directly from Lemma \ref{lem:2-point-arg}.
\end{proof}


 \section{Higher regularity}

In this section we utilize the uniform ball condition (UBC) from previous section and prove the full regularity of the flat flow solution of \eqref{eq:VMCF}. It is well known that the classical solution for the mean curvature flow is well defined as long as the second fundamental form stays bounded \cite{MantegazzaBook}. For the volume preserving flow this is not enough as the flow may develop singularities even if it stays regular \cite{M, MaSim}. However, if the flow in addition satisfies UBC then these singularities do not occur.  In this section we show that the approximative flat flow becomes instantaneously smooth and stays smooth as long as it satisfies UBC. We will prove this via energy estimates.


Our starting point is the formula in Lemma \ref{lem:maaginen}, which for sets $E_1$ and $E_2$ as in the lemma, gives the formula which relates their normals as 
\[
\nu_{E_1} \circ \pi_{\pa E_1} = \nabla_{\tau_2} d_{E_1}  + \sqrt{1 - |\nabla_{\tau_2} d_{E_1}|^2} \, \nu_{E_2} \quad \text{on }\,  \pa E_2.
\]
Recall that $\nabla_{\tau_2}$ denotes the tangential gradient on $\pa E_2$. Assume now further that  $E_2$ is a minimizer of the functional  $\mathcal{F}_h(\ \cdot \ , E_1) $ defined in  \eqref{min mov}, we may use the Euler-Lagrange equation \eqref{euler} and have
\beq \label{eq:maagi-deri}
\nu_{E_1} \circ \pi_{\pa E_1} =  - h\, \nabla_{\tau_2} H_{E_2}  + \sqrt{1 - |\nabla_{\tau_2} d_{E_1}|^2} \, \nu_{E_2}  \quad \text{on }\,  \pa E_2.
\eeq
This identity is simple enough  for us to differentiate multiple times and this in turn gives us formula which is the discrete analog of the identity  (see e.g. \cite[Lemma 3.5]{Mantegazza2002})
\beq \label{eq:deri-high-conti}
\frac{\d}{\d t} \Delta^k H_{E_t} = \Delta^{k+1} H_{E_t} + \text{lower order terms} .
\eeq  

Let us, for the sake of clarification, show how we obtain the discrete version of \eqref{eq:deri-high-conti} for $k =0$ from \eqref{eq:maagi-deri}, which reads as follows
\beq \label{eq:high-deri-1}
\sqrt{1-|\nabla_{\tau_2} d_{E_1}|^2} H_{E_2}  - H_{E_1} \circ \pi_{\pa E_1}  =h\,   \Delta_{\tau_2} H_{E_2}  + h^2 \,  A_2(\cdot)  \nabla_{\tau_2} H_{E_2} \cdot \nabla_{\tau_2} H_{E_2}   + a_1(\cdot) d_{E_1} \ \ \text{on} \ \   \pa E_2,
\eeq
where the function  $a_1(\cdot)$  and  the matrix field $A_2(\cdot )$ depend smoothly on $d_{E_1}$, $\nu_{E_1}\circ \pi_{\pa E_1}$
 $\nu_{E_2}$, $B_{E_1}\circ \pi_{\pa E_1}$ and $B_{E_2}$. In particular,  since $E_1$ and $E_2$ satisfy uniform ball condition with radius $r_0/2$, then  $a_1(\cdot)$  and $A_2(\cdot )$ are uniformly bounded. 

Indeed, by applying the tangential divergence on \eqref{eq:maagi-deri} we have
\[
\diver_{\tau_2}\big(\nu_{E_1} \circ \pi_{\pa E_1}\big) = - h\, \Delta_{\tau_2} H_{E_2}  +  \sqrt{1 - |\nabla_{\tau_2} d_{E_1}|^2} \, H_{E_2}\quad \text{on }\,  \pa E_2.
\]
In order to calculate the LHS, we use \eqref{def:proj},  \eqref{eq:dist-22} and \eqref{eq:dist-3} to obtain 
\[
\begin{split}
\nabla \big(\nu_{E_1} \circ \pi_{\pa E_1}\big) = \nabla^2 d_{E_1} &=  B_{E_1}\circ \pi_{\pa E_1}(I + d_{E_1}B_{E_1}\circ \pi_{\pa E_1})^{-1} \\
& = B_{E_1}\circ \pi_{\pa E_1} - d_{E_1}\left(I + d_{E_1}B_{E_1}\circ \pi_{\pa E_1}\right)^{-1}(B_{E_1}\circ \pi_{\pa E_1})^2
\end{split}
\]
which holds in the tubular neighborhood  $\mathcal N_{r_0}(\pa E_1)$, where we also used the fact 
\[
(B_{E_1}\circ \pi_{\pa E_1})(I + d_{E_1}B_{E_1}\circ \pi_{\pa E_1})^{-1} =(I + d_{E_1}B_{E_1}\circ \pi_{\pa E_1})^{-1} (B_{E_1}\circ \pi_{\pa E_1}). 
\]
Agian from \eqref{eq:maagi-deri}  we have
\[
\nu_{E_2} = \frac{1}{\sqrt{1-|\nabla_{\tau_2} d_{E_1}|^2}} \left( h \, \nabla_{\tau_2} H_{E_2} + \nu_{E_1} \circ \pi_{\pa E_1} \right) 
\]
on $\pa E_2$. Using the above identities and the fact   $B_{E_1} \nu_{E_1}= 0$ on $\pa E_1$, we have the following equality on 
$\pa E_2$
\[
\begin{split}
\diver_{\tau_2}\big(\nu_{E_1} \circ \pi_{\pa E_1}\big)  &= \Tr \big((I- \nu_{E_2}\otimes \nu_{E_2}) \nabla^2 d_{E_1}   \big)\\
&= H_{E_1} \circ \pi_{\pa E_1} -  d_{E_1}\Tr \big( \left(I + d_{E_1}B_{E_1}\circ \pi_{\pa E_1}\right)^{-1}(B_{E_1}\circ \pi_{\pa E_1})^2 \big)\\
&\,\,\,\,\, -\frac{h^2}{1 - |\nabla_{\tau_2} d_{E_1}|^2}\big( \left(I + d_{E_1}B_{E_1}\circ \pi_{\pa E_1}\right)^{-1}(B_{E_1}\circ \pi_{\pa E_1}) \big) \nabla_{\tau_2} H_{E_2} \cdot \nabla_{\tau_2} H_{E_2}
\end{split}
\]
The equation \eqref{eq:high-deri-1}  then follows from the previous calculations and from the identity
\beq \label{eq:dist-norms}
(\nu_{E_1} \circ \pi_{\pa E_1})  \cdot \nu_{E_2} = \sqrt{1- |\nabla_{\tau_2} d_{E_1}|^2}  \quad \text{on } \, \pa E_2 
\eeq
which is a direct consequence of Lemma \ref{lem:maaginen}.

We may differentiate the equality \eqref{eq:high-deri-1} further and obtain a discrete version of \eqref{eq:deri-high-conti} for every order $k$. This will produce several nonlinear error terms which have rather complicated structure. However, by introducing sufficiently  efficient notation we are able to identify the structure of these error terms and   by using the uniform ball condition and the interpolation inequality from Proposition   \ref{prop:interpolation} we are able to reproduce the argument from \cite{FJM3D} in the discrete setting. 
The following proposition  is the core of the proof for the higher order regularity. 
\begin{proposition}
\label{prop:maaginen-deri-2}
Assume that $E_1 \subset \R^{n+1}$ is an open and bounded set, with $|E_1| = m_0$, which satisfies uniform ball condition with radius $r_0$ and let $E_2$ be any minimizer of $\mathcal{F}_h( \ \cdot \ , E_1) $ defined in  \eqref{min mov}.
There is $h_0=h_0(n,m_0,r_0)$ such that if $h \leq h_0$ and $E_1$ is $C^{2m+3}$-regular for $m =  0,1,2, \dots$ then 
\[
\begin{split}
\Delta_{\tau_2}^m H_{E_2}  - (\Delta_{\tau_1}^m H_{E_1}) \circ \pi_{\pa E_1}  &=h\,   \Delta_{\tau_2} ^{m+1} H_{E_2}  +  h\,  R_{2m} \quad \text{and} \\
 \nabla_{\tau_2} \Delta_{\tau_2}^m H_{E_2}  - (\nabla_{\tau_1}  \Delta_{\tau_1}^m H_{E_1}) \circ  \pi_{\pa E_1}  &=h\,  \nabla_{\tau_2}   \Delta_{\tau_2}^{m+1} H_{E_2} - \pa_{\nu_{E_2}} (\Delta_{\tau_1}^m H_{E_1} \circ  \pi_{\pa E_1})  \nu_{E_2} +h\,   R_{2m+1}
\end{split}
\]
on $\pa E_2$ and the error term $R_{l}$ for $l = 0,1,2, \dots$ satisfies the estimate 
\[
\| R_l \|_{L^2(\pa E_2)}^2 \leq C_l  \left(1+ \| B_{E_2}\|_{H^{l+1}(\pa E_2)}^2 +  \| B_{E_1}\|_{H^{l}(\pa E_1)}^2\right),
\]
where $C_l = C_l(l,n,m_0,r_0)$.
\end{proposition}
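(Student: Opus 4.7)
The plan is to argue by induction on $m \geq 0$, establishing both identities simultaneously. Throughout, we assume $h$ small enough that Proposition \ref{prop:distance-bound} and Lemma \ref{lemma:improved-regularity} apply, so that $\pi_{\pa E_1}|_{\pa E_2}$ is a smooth diffeomorphism onto $\pa E_1$, $\|d_{E_1}\|_{L^\infty(\pa E_2)} \leq Ch$, and the Euler--Lagrange equation \eqref{euler}, the normal comparison \eqref{eq:maagi-deri} and the distance--normal identity \eqref{eq:dist-norms} all hold classically on $\pa E_2$. A crucial consequence of \eqref{euler} that we will use repeatedly is $\nabla_{\tau_2} d_{E_1} = -h \nabla_{\tau_2} H_{E_2}$, making $\nabla_{\tau_2} d_{E_1}$ genuinely $O(h)$ rather than merely $O(\sqrt h)$.

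For the base case $m=0$ of the first identity, one simply rearranges \eqref{eq:high-deri-1} and defines
\[
hR_0 := \bigl(1 - \sqrt{1-|\nabla_{\tau_2} d_{E_1}|^2}\bigr) H_{E_2} + h^2 A_2 \nabla_{\tau_2} H_{E_2} \cdot \nabla_{\tau_2} H_{E_2} + a_1 d_{E_1}.
\]
Each term is $O(h)$ by $|d_{E_1}| \leq Ch$ and $|\nabla_{\tau_2} d_{E_1}|^2 \leq Ch$ (as in \eqref{gradinterpolation}), and the $L^2$-bound on $R_0$ reduces to an $H^1$-control of $H_{E_2}$, absorbed into $1 + \|B_{E_2}\|_{H^1}^2$ via Proposition \ref{prop:mean-curv}. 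The base case of the second identity is obtained by applying $\nabla_{\tau_2}$ to the first and invoking the decomposition
\[
\nabla_{\tau_2}(f \circ \pi_{\pa E_1}) = (\nabla_{\tau_1} f) \circ \pi_{\pa E_1} - \pa_{\nu_{E_2}}(f \circ \pi_{\pa E_1})\,\nu_{E_2} + \mathcal{E}_\nabla[f],
\]
where $\mathcal{E}_\nabla[f]$ collects contributions with $O(d_{E_1})$-coefficients coming from \eqref{eq:pro-2}.

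The inductive step rests on the analogous chain rule for the Laplace--Beltrami operator,
\[
\Delta_{\tau_2}(f \circ \pi_{\pa E_1}) = (\Delta_{\tau_1} f) \circ \pi_{\pa E_1} + \mathcal{E}_\Delta[f] \qquad \text{on } \pa E_2,
\]
derived by taking $\diver_{\tau_2}$ of the gradient decomposition and using \eqref{def:proj}, \eqref{eq:pro-2} and \eqref{eq:dist-3}. The term $\mathcal{E}_\Delta[f]$ is a nonlinear algebraic expression in $(\nabla_{\tau_1}^{\leq 2} f) \circ \pi_{\pa E_1}$, $B_{E_1} \circ \pi_{\pa E_1}$, $B_{E_2}$, $d_{E_1}$ and $\nabla_{\tau_2} d_{E_1}$ in which every summand carries at least one factor of $d_{E_1}$ or $\nabla_{\tau_2} d_{E_1}$, so $\mathcal{E}_\Delta[f] = O(h)$. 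Applying $\Delta_{\tau_2}$ to the first identity at level $m$ and substituting $f = \Delta_{\tau_1}^m H_{E_1}$ produces the first identity at level $m+1$ with $hR_{2(m+1)} := h \Delta_{\tau_2} R_{2m} + \mathcal{E}_\Delta[\Delta_{\tau_1}^m H_{E_1}]$. Applying $\nabla_{\tau_2}$ to the first identity at level $m+1$ then yields the second identity at level $m+1$, with the stated normal correction $\pa_{\nu_{E_2}}(\Delta_{\tau_1}^m H_{E_1} \circ \pi_{\pa E_1}) \nu_{E_2}$ emerging automatically from the gradient decomposition above.

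Finally, for the error bound, one observes that each $R_l$ is a finite sum of schematic products $\bar\nabla^{\alpha_1} u_1 \star \cdots \star \bar\nabla^{\alpha_k} u_k$ with $u_i$ drawn from $\{d_{E_1}, \nu_{E_2}, \nu_{E_1} \circ \pi_{\pa E_1}, B_{E_2}, B_{E_1} \circ \pi_{\pa E_1}\}$ (and their compositions with $\pi_{\pa E_1}$) and $|\alpha_1| + \cdots + |\alpha_k| \leq l+1$. The uniform ball condition with radius $r_0/2$ supplies $L^\infty$-bounds on zero-order data, Lemma \ref{lem:cova-eucl} converts Euclidean derivatives of $\pi_{\pa E_1}$-pullbacks into covariant derivatives on $\pa E_2$ at the cost of extra $\bar\nabla^\bullet B_{E_2}$-factors, and Proposition \ref{prop:kato-ponce} together with Proposition \ref{prop:mean-curv} then closes the $L^2$-estimate in terms of $\|B_{E_1}\|_{H^l}$ and $\|B_{E_2}\|_{H^{l+1}}$. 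The principal obstacle will be the combinatorial bookkeeping at this last stage: after iterating the chain rule $m$ times, one must verify that every nonlinear term remaining in $\mathcal{E}_\Delta[\Delta_{\tau_1}^m H_{E_1}]$ still fits the schematic form with the advertised total derivative count $l+1$, so that no Sobolev norm beyond the stated ones appears. The $\star$-notation of \cite{Ham, Mantegazza2002}, which collapses all permissible algebraic contractions into a single symbol, is what makes this tracking tractable.
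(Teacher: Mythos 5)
Your proposal follows essentially the same route as the paper: the $m=0$ identity from Lemma \ref{lem:maaginen} plus the Euler--Lagrange equation, a commutation formula $\Delta_{\tau_2}(u\circ\pi_{\pa E_1})=(\Delta_{\tau_1}u)\circ\pi_{\pa E_1}+O(h)$ (the paper's Lemma \ref{lem:laplace12}) iterated $m$ times so that $R_{2m}=\sum_{k\le m}\Delta_{\tau_2}^{m-k}\rho_{2k}$, and the error closed via the $\star$-calculus, Lemma \ref{lem:cova-eucl}/\ref{lem:away-bdr}, Proposition \ref{prop:kato-ponce} and Proposition \ref{prop:mean-curv}. The one point to be careful about is your claim that $\nabla_{\tau_2}d_{E_1}=-h\nabla_{\tau_2}H_{E_2}$ is ``genuinely $O(h)$'': pointwise it is only $O(\sqrt h)$ since $\|\bar\nabla H_{E_2}\|_{L^\infty}^2\le C/h$, and it is precisely the resulting quadratic terms $h^2\,\nabla_{\tau_2}H_{E_2}\star\nabla_{\tau_2}H_{E_2}$, whose raw derivative count exceeds $l+1$, that the paper must absorb separately using this $L^\infty$ bound --- the bookkeeping obstacle you correctly flag at the end.
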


We note that so far we have not used any results  from differential geometry. In fact, we need the notation from geometry only to prove Proposition \ref{prop:maaginen-deri-2}. Therefore,   instead of giving the proof of Proposition \ref{prop:maaginen-deri-2}, which is technically challenging, we  show first  how we may use it  to obtain the regularity estimate \eqref{eq:smoothing} in the statement of Theorem \ref{thm1}. Here is the main result of this section. 
\begin{theorem}
\label{thm3} Let $E_0$ be an open and bounded set, with $|E_0|=m_0$, and let $(E_t^h)_{t \geq 0}$ be an approximative flat flow 
starting from $E_0$ defined  in \eqref{app flow}.
For given $r_0 \in \R_+$  there is $h_0=h_0(n,m_0,r_0) \in \R_+$
such that if $h \leq h_0$,  $E_t^h$ satisfies uniform ball condition with radius $r_0$ in $[0,T]$ and if $(l+2)h \leq T$ for a given $l \in \N \cup \{0\}$, then  we have 
\[
\sup_{t \in [(l+2) h,T]} \left( (t-lh)^l \|H_{E_t^h}\|_{H^{l}(\pa E_t^h)}^2 \right) + \int_{(l+2)h}^{T} (t-lh)^{l}  \|H_{E_t^h}\|_{H^{l+1}(\pa E_t^h)}^2 \, \d t  \leq C,
\]
for a constant $C = C(l,n,m_0,r_0,T)$. 
\end{theorem}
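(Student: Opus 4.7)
The plan is to proceed by induction on $l$, using Proposition \ref{prop:maaginen-deri-2} as the workhorse and Proposition \ref{prop:mean-curv} to translate $L^2$-control of $\Delta^m H$ (and of $\nabla_\tau \Delta^m H$) into $H^l$-control of $H$ itself. The weight $(t-lh)^l$ is the discrete analog of the parabolic smoothing weight $t^l$ and bypasses the lack of initial bounds on higher derivatives of $H_{E_0}$.

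For the base case $l=0$, I would note that by Theorem \ref{thm2} each $E^h_t$ satisfies UBC with radius $r_0/2$, so $\|H_{E^h_t}\|_{L^\infty} \le 2n/r_0$ and $\mathcal H^n(\pa E^h_t) \le C$ by Lemma \ref{lemma:calibration}, which gives the $\sup$ part immediately. For the $H^1$-in-time integral, I would take $m=0$ in Proposition \ref{prop:maaginen-deri-2}, multiply the resulting equation by $2H_{E_{k+1}}$, integrate over $\pa E^h_{(k+1)h}$, use the identity $2a(a-b) = a^2 - b^2 + (a-b)^2$ together with integration by parts on the Laplace term, and then change variables along $\pi_{\pa E^h_{kh}}$ whose Jacobian is $1+O(h)$ by Lemma \ref{lemma:globalLip} and the distance bound of Proposition \ref{prop:distance-bound}. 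Absorbing the error $2h\int H_{E_{k+1}} R_0$ via Young's inequality yields the discrete differential inequality
\[
\|H_{E_{k+1}}\|_{L^2}^2 - \|H_{E_k}\|_{L^2}^2 + 2h\|\nabla_\tau H_{E_{k+1}}\|_{L^2}^2 \le Ch\bigl(1+\|H_{E_k}\|_{L^2}^2+\|H_{E_{k+1}}\|_{L^2}^2\bigr),
\]
and summation together with discrete Gronwall closes the case.

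For the inductive step from $l-1$ to $l$ I would write $l\in\{2m,2m+1\}$ and use the first (respectively second) identity of Proposition \ref{prop:maaginen-deri-2}. In the odd case, the extra term $\pa_{\nu_{E_{k+1}}}(\Delta_{\tau_1}^m H_{E_k}\circ\pi_{\pa E_k})\,\nu_{E_{k+1}}$ is normal and is annihilated by contraction against the tangential test field $\nabla_\tau\Delta^m H_{E_{k+1}}$. The same manipulation as in the base case --- multiplying by the appropriate test field, applying $2a(a-b)=a^2-b^2+(a-b)^2$, integrating by parts, changing variables along $\pi_{\pa E^h_{kh}}$, and controlling the error $R_l$ via Propositions \ref{prop:maaginen-deri-2} and \ref{prop:mean-curv} --- produces a discrete inequality of the form
\[
\|u_{k+1}\|_{L^2}^2 - \|u_k\|_{L^2}^2 + ch\|\text{(next-order derivative)}\|_{L^2}^2 \le Ch\bigl(1+\|u_k\|_{L^2}^2+\|u_{k+1}\|_{L^2}^2\bigr) + h\mathcal E_k,
\]
with $u_k=\Delta^m H_{E^h_{kh}}$ or $\nabla_\tau\Delta^m H_{E^h_{kh}}$, where $\mathcal E_k$ collects terms already controlled by the inductive hypothesis via Proposition \ref{prop:mean-curv}. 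I would then multiply by the weight $((k+1-l)h)^l$, sum in $k$, and use the discrete Leibniz formula $((k+1-l)h)^l-((k-l)h)^l = l\tilde\xi_k^{l-1}h$. The extra boundary term $l\tilde\xi_k^{l-1}h\|u_k\|_{L^2}^2$ it produces is summable with the required bound by the inductive hypothesis at level $l-2$ combined with Proposition \ref{prop:mean-curv}, and a final discrete Gronwall argument delivers both halves of the claim.

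The hard part will be the absorption of the error term $h\int u_{k+1}\,R_{2m}$: by Proposition \ref{prop:maaginen-deri-2} the $L^2$-bound on $R_{2m}$ involves $\|B_{E_{k+1}}\|_{H^{2m+1}}$, which via Proposition \ref{prop:mean-curv} is controlled exactly by the dissipation $\|\nabla_\tau\Delta^m H_{E_{k+1}}\|_{L^2}$. One has to verify carefully that Cauchy--Schwarz followed by Young's inequality with a sufficiently small parameter (depending only on $n,m_0,r_0$) leaves a strictly positive multiple of the dissipation on the left-hand side, so that all remaining contributions land in $\mathcal E_k$ and can be handled by the inductive hypothesis combined with the interpolation bounds of Proposition \ref{prop:interpolation} and Lemma \ref{lem:laplace}.
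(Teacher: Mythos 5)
Your proposal is correct and follows essentially the same route as the paper: test the discrete identities of Proposition \ref{prop:maaginen-deri-2} against $\Delta^m H$ (resp.\ $\nabla_\tau\Delta^m H$), integrate by parts, change variables along $\pi_{\pa E_k}$ with Jacobian $1+O(h)$, absorb $R_l$ into the dissipation via Proposition \ref{prop:mean-curv} and interpolation, then weight by $((k-l)h)^l$, sum, and induct on $l$. One small correction: the boundary term $\sum_k l\,\tilde\xi_k^{\,l-1}h\|u_k\|_{L^2}^2 \approx \sum_k h\,(kh)^{l-1}\|H_{E_k}\|_{H^{l}}^2$ produced by the discrete Leibniz rule is controlled by the dissipation half of the inductive hypothesis at level $l-1$ (not $l-2$), which is precisely what the step from $l-1$ to $l$ makes available.
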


\begin{proof}
In the proof, $C$ and $C_m$ denote a positive real number
which may change their values but always in a manner that 
we have the dependencies $C=C(n,m_0,r_0)$ and $C_m=C_m(m,n,m_0,r_0, T)$.
We use the abbreviation $E_k = E^h_{kh}$ for $k=0,1,2,\ldots$

First, by Proposition \ref{prop:distance-bound},  Lemma \ref{lemma:improved-regularity}, Remark \ref{rem:improved-regularity} and Theorem \ref{thm2},  we find $h_0=h_0(n,m_0,r_0) >0$ such that if $h \leq h_0$ and $E_k$ is $C^{2k+1}$-regular, bounded set of volume $m_0$, which satisfies uniform ball condition with radius $r_0$, then  the consequent set $E_{k+1}$ is  $C^{2k+3}$-regular, bounded and of volume $m_0$,
with
\[
\|d_{E_k}\|_{L^\infty(\pa E_{k+1})}\leq Ch < r_0/2. 
\]
Moreover, $E_{k+1}$ satisfies uniform ball condition with radius $r_0/2$  and the projection $\pi_{\pa E_k} : \pa E_{k+1} \rightarrow \pa E_k$ is injective. 
We may then prove that, for $k\geq 1$, $\pi_{\pa E_k} : \pa E_{k+1} \rightarrow \pa E_k$ is a  diffeomorphism with
\beq \label{eq:thm3-0}
J_{\tau_{k+1}} \pi_{\pa E_k} \geq 1 - Ch > 0 \ \ \text{on} \ \ \pa E_{k+1},
\eeq
where the tangential Jacobian $J_{\tau_{k+1}}{\pi_{\pa E_k}}$ of $\pi_{\pa E_k}$ on $\pa E_{k+1}$ is defined  in \eqref{def:tJacob}. 
Indeed, since $\pa E_{k+1} \subset \mathcal N_{r_0/2}(\pa E_k)$, then $\pi_{\pa E_k}$ is $C^1$-regular map on $\pa E_{k+1}$. 
Recalling the injectivity of the projection we are remain to prove \eqref{eq:thm3-0}. By \eqref{eq:pro-1} we may write
\[
\nabla \pi_{\pa E_{k+1}} = I - \nabla d_{E_k} \otimes \nabla d_{E_k} - d_{E_k} \nabla^2 d_{E_k} \ \ \text{on} \ \ \pa E_{k+1}.
\]
Thus, it follows from the definition in \eqref{def:tJacob} and $\nabla^2 d_{E_k} \nabla d_{E_k}=0$ in $\mathcal N_{r_0}(\pa E_k)$
that for given a point $x \in \pa E_{k+1}$ there is an orthonormal basis $v_1, \dots, v_n$ of $G_x \pa E_{k+1}$ such that   
\begin{align*}
J_{\tau_{k+1}} \pi_{\pa E_k}(x) 
&= \prod_{i=1}^n \left| \left( I-  \nabla d_{E_k}(x) \otimes \nabla d_{E_k}(x) - d_{E_k}(x)\nabla^2 d_{E_k}(x)\right) v_i\right| \\
&= \prod_{i=1}^n \left(1 - (\nabla d_{E_k}(x) \cdot v_i)^2 - 2d_{E_k}(x)\nabla^2 d_{E_k}(x) v_i \cdot v_i  + |d_{E_k}(x)|^2|\nabla^2 d_{E_k}(x)v_i|^2 \right)^\frac12.
\end{align*}
Since $\pa E_{k+1} \subset \mathcal N_{r_0/2}(\pa E_k)$, then Lemma \ref{lemma:globalLip} yields $\sup_{\pa E_{k+1}} |\nabla^2 d_{E_k}|_{\op} \leq C$. 
Further, since $E_{k+1}$ satisfies uniform ball condition with radius $r_0/2$, then by Lemma \ref{lemma:simpleinterpolation} and by the previous estimates we deduce
\[
|\nabla d_{E_k}(x)\cdot v_i|^2 \leq |\nabla_{\tau_2} d_{E_k}(x)|^2 \leq 4\|d_{E_k}\|_{L^\infty(\pa E_{k+1})}\left(\sup_{\pa E_{k+1}}|\nabla^2 d_{E_k}|_{\op} + \frac{\|\nabla d_{E_k}\|_{L^\infty(\pa E_{k+1})}}{r_0/2}\right) \leq Ch .
\]
Therefore, by combining the previous observations and shrinking $h_0$, if needed, we obtain \eqref{eq:thm3-0}. 
Again, by possibly shrinking $h_0$, we may assume that the implications of Proposition \ref{prop:maaginen-deri-2} hold true
for the parameters $m_0$ and $r_0/2$.

Let us from now on assume that  the sets  $E^h_t$ satisfy uniform ball condition with radius $r_0$ 
for every $t \in [0,T]$. Let us denote $K = \lfloor T/h\rfloor$. Then the previous discussion holds for every $E_k$ and $k=0,1,2,\ldots,K$. For the sake of presentation, we use abbreviations $\| B_{E_k}\|_{L^2}=\| B_{E_k}\|_{L^2(\pa E_k)}$,
$ \| B_{E_k}\|_{H^{2m}} = \| B_{E_k}\|_{H^{2m}(\pa E_k)}$  etc.

After the initialization, we prove the claim by induction and to this aim we begin by  proving the main regularity estimates.
We claim that for every $m =0,1,2, \dots$, with $m  \leq K-2$, and every $k =m+1, m+2,  \dots, K$ it holds   
\beq \label{eq:thm3-1}
\|\Delta_{\tau_{k+1}}^m H_{E_{k+1}}\|_{L^2}^2  \leq (1+ C_mh)\|\Delta_{\tau_k}^m H_{E_{k}}\|_{L^2}^2  - h \|\nabla_{\tau_{k+1}}\Delta_{\tau_{k+1}}^m H_{E_{k+1}}\|_{L^2}^2  + C_m h 
\eeq
and
\beq \label{eq:thm3-2}
 \|\nabla_{\tau_{k+1}} \Delta_{\tau_{k+1}}^m H_{E_{k+1}}\|_{L^2}^2\leq (1+ C_mh) \|\nabla_{\tau_{k}} \Delta_{\tau_{k}}^m H_{E_{k}}\|_{L^2}^2- h \|\Delta_{\tau_{k+1}}^{m+1} H_{E_{k+1}}\|_{L^2}^2  + C_m h .
\eeq
We first prove \eqref{eq:thm3-1} and fix $m$. Recall that  for $k \geq m+1$ the set $E_k$ is $C^{2m+3}$-regular. Therefore by Proposition  \ref{prop:maaginen-deri-2} it holds for every $k =m+1, m+2,  \dots, K$
\[
\Delta_{\tau_{k+1}}^m H_{E_{k+1}}  - (\Delta_{\tau_k}^m H_{E_k}) \circ \pi_{\pa E_k}  =h\,   \Delta_{\tau_{k+1}} ^{m+1} H_{E_{k+1}}  +  h\,  R_{2m,k} \ \ \text{on} \ \ \pa E_{k+1},
\]
where the  remainder term $ R_{2m,k}$ satisfies 
\[
\| R_{2m,k}\|_{L^2}^2 \leq C_m \big(1+ \|B_{E_{k+1}}\|_{H^{2m+1}}^2+  \|B_{E_k}\|_{H^{2m}}^2 \big).
\] 
Again, since $E_k$ and $E_{k+1}$ satisfy uniform ball condition with radius $r_0/2$ and $|E_k|=m_0=|E_{k+1}|$,
then $\|B_{E_k}\|_{L^\infty},\|B_{E_k}\|_{L^\infty} \leq C$ by \eqref{eq:curva-bound-tri} and $P(E_k),P(E_{k+1}) \leq C$ by Lemma \ref{lemma:calibration}.
Therefore, we may use Proposition \ref{prop:mean-curv} and Young's inequality to deduce 
\begin{align}
\notag
\|B_{E_k}\|_{H^{2m}}^2 &\leq C_m\left( 1+  \|\Delta_{\tau_{k}}^m H_{E_k}\|_{L^{2}}^2\right) \quad \text{and}  \\
\label{eq:thm3-1b}
\|B_{E_{k+1}}\|_{H^{2m+1}}^2  &\leq C_m \left( 1+ \|\nabla_{\tau_{k+1}}\Delta_{\tau_{k+1}}^m H_{E_{k+1}} \|_{L^{2}}^2\right).
\end{align}
We also observe that $ \|\Delta_{\tau_{k+1}}^m H_{E_{k+1}}\|_{L^2}^2 \leq C_m \|B_{E_{k+1}}\|_{H^{2m}}$. 
Let $\eps \in (0,1)$ be a number which we will choose later.
By using the previous observations, \eqref{eq:thm3-0},  Young's inequality,
and integration by parts we estimate as follows
\[
\begin{split}
 &\|\Delta_{\tau_{k+1}}^m H_{E_{k+1}}\|_{L^2}^2 -  \|\Delta_{\tau_{k}}^m H_{E_{k}}\|_{L^2}^2   \\
&\leq    \int_{\pa E_{k+1}} |\Delta_{\tau_{k+1}}^m H_{E_{k+1}}|^2 - |\Delta_{\tau_k}^m H_{E_k}\circ \pi_{\pa E_k}|^2\, \d \Ha^n +  C h \, \int_{\pa E_{k+1}}|\Delta_{\tau_k}^m H_{E_k}\circ \pi_{\pa E_k}|^2\, \d \Ha^n  \\
&\leq   \int_{\pa E_{k+1}} |\Delta_{\tau_{k+1}}^m H_{E_{k+1}}|^2 - |\Delta_{\tau_k}^m H_{E_k}\circ \pi_{\pa E_k}|^2\, \d \Ha^n +  \frac{C h}{1-Ch} \,  \|\Delta_{\tau_{k}}^m H_{E_{k}}\|_{L^2}^2  \\
&\leq  2 \int_{\pa E_{k+1}} \Delta_{\tau_{k+1}}^m H_{E_{k+1}} ( \Delta_{\tau_{k+1}}^m H_{E_{k+1}}  - \Delta_{\tau_k}^m H_{E_k}\circ \pi_{\pa E_k})\, \d \Ha^n + Ch \, \|\Delta_{\tau_{k}}^m H_{E_{k}}\|_{L^2}^2\\
&= 2h\int_{\pa E_{k+1}} \Delta_{\tau_{k+1}}^m H_{E_{k+1}} (  \Delta_{\tau_{k+1}} ^{m+1} H_{E_{k+1}}  + R_{2m,k} )\, \d \Ha^n + Ch\, \|\Delta_{\tau_{k}}^m H_{E_{k}}\|_{L^2}^2 \\
&\leq - 2h  \|\nabla_{\tau_{k+1}}\Delta_{\tau_{k+1}}^m H_{E_{k+1}} \|_{L^{2}}^2  +  \eps h \, \| R_{2m,k}\|_{L^2}^2  +
\frac{h}\eps\,  \|\Delta_{\tau_{k+1}}^m H_{E_{k+1}}\|_{L^2}^2+ C h \, \|\Delta_{\tau_{k}}^m H_{E_{k}}\|_{L^2}^2 \\
&\leq - 2h  \|\nabla_{\tau_{k+1}}\Delta_{\tau_{k+1}}^m H_{E_{k+1}} \|_{L^{2}}^2  + 
 C_mh \, \left(\eps \|B_{E_{k+1}}\|_{H^{2m+1}}^2 + \frac{1}{\eps}\|B_{E_{k+1}}\|_{H^{2m}}^2\right) + C_m h \,\left(1+ \|\Delta_{\tau_{k}}^m H_{E_{k}}\|_{L^2}^2\right) \\
&\leq - 2h  \|\nabla_{\tau_{k+1}}\Delta_{\tau_{k+1}}^m H_{E_{k+1}} \|_{L^{2}}^2  + 
 C_mh \, \left(\eps \|\nabla_{\tau_{k+1}}\Delta_{\tau_{k+1}}^m H_{E_{k+1}} \|_{L^{2}}^2 + \frac{1}{\eps}\|B_{E_{k+1}}\|_{H^{2m}}^2\right) + C_m h \,\left(1+ \|\Delta_{\tau_{k}}^m H_{E_{k}}\|_{L^2}^2\right) .
\end{split}
\]
By choosing $\eps = (1+C_m)^{-1}/2$, the previous estimate yields
\beq
\label{eq:thm3-1c}
\begin{split}
\|\Delta_{\tau_{k+1}}^m &H_{E_{k+1}}\|_{L^2}^2-  \|\Delta_{\tau_{k}}^m H_{E_{k}}\|_{L^2}^2 \\
&\leq - \frac{3h}{2} \|\nabla_{\tau_{k+1}}\Delta_{\tau_{k+1}}^m H_{E_{k+1}}\|_{L^2}^2
+ C_m h \, \|B_{E_{k+1}}\|_{H^{2m}}^2 +  C_m h \,\left(1+ \|\Delta_{\tau_{k}}^m H_{E_{k}}\|_{L^2}^2\right).
\end{split}
\eeq

Since $\|B_{E_{k+1}}\|_{L^\infty}, P(E_{k+1}) \leq C$, we may use Proposition \ref{prop:interpolation} to 
find $\theta = \theta(m,n) \in (0,1)$ such that 
\[ 
 \|B_{E_{k+1}}\|_{H^{2m}}^2 \leq C_m \|B_{E_{k+1}}\|_{H^{2m+1}}^{2\theta} \|B_{E_{k+1}}\|_{L^\infty}^{2(1-\theta)} \leq  C_m \eps \, \|B_{E_{k+1}}\|_{H^{2m+1}}^2 +C_m \eps^{-\frac{\theta}{1-\theta}}
\]
for any $\eps \in (0,1)$, where the last inequality follows from Young's inequality and 
the curvature bound. Thus, by combing the above with \eqref{eq:thm3-1c} and
\eqref{eq:thm3-1b} the estimate \eqref{eq:thm3-1} follows with a suitable choice of $\eps$.

Let us then prove \eqref{eq:thm3-2}.  The argument is similar than above and we only point out the main differences. 
Now Proposition  \ref{prop:maaginen-deri-2} gives for every $k=m+1, m+2, \dots, K$ the formula 
\begin{align*}
&\nabla_{\tau_{k+1}} \Delta_{\tau_{k+1}}^m H_{E_{k+1}}  - (\nabla_{\tau_k}  \Delta_{\tau_k}^m H_{E_k}) \circ  \pi_{\pa E_k} \\ =&h\,  \nabla_{\tau_{k+1}}   \Delta_{\tau_{k+1}}^{m+1} H_{E_{k+1}} - \pa_{\nu_{E_{k+1}}} (\Delta_{\tau_k}^m H_{E_k} \circ  \pi_{\pa E_k})  \nu_{E_{k+1}} 
+h\,   R_{2m+1,k}\quad \text{on} \ \ \pa E_{k+1},
\end{align*}
where 
\[
\| R_{2m+1,k}\|_{L^2}^2 \leq C_m\left(1+ \|B_{E_{k+1}}\|_{H^{2m+2}}^2+  \|B_{E_k}\|_{H^{2m+1}}^2 \right),
\]
and, again, by using  Proposition \ref{prop:mean-curv} and Young's inequality we have estimates 
\begin{align*}
\|B_{E_k}\|_{H^{2m+1}}^2 &\leq C_m( 1+  \|\nabla_{\tau_k}\Delta_{\tau_{k}}^m H_{E_k}\|_{L^{2}}^2) \quad \text{and} \\
\|B_{E_{k+1}}\|_{H^{2m+2}}^2  &\leq  C_m\left( 1 +  \|\Delta_{\tau_{k+1}}^{m+1} H_{E_{k+1}} \|_{L^{2}}^2\right).
\end{align*}
We use the previous observations, the Cauchy-Schwarz inequality, the estimate $\|\nabla_{\tau_{k+1}}  \Delta_{\tau_{k+1}}^m H_{E_{k+1}}\|_{L^2} \leq  C_m \|B_{E_{k+1}}\|_{H^{2m+1}}^2$ 
and argue as in proving \eqref{eq:thm3-1}  to deduce
\[
\begin{split}
 &\|\nabla_{\tau_{k+1}} \Delta_{\tau_{k+1}}^m H_{E_{k}}\|_{L^2}^2 - \|\nabla_{\tau_{k}} \Delta_{\tau_{k}}^m H_{E_{k}}\|_{L^2}^2 \\
&\leq \int_{\pa E_{k+1}} |\nabla_{\tau_{k+1}} \Delta_{\tau_{k+1}}^m H_{E_{k+1}}|^2 - |\nabla_{\tau_k} \Delta_{\tau_k}^m H_{E_k}\circ \pi_{\pa E_k}|^2\, \d \Ha^n +  Ch \, \|\nabla_{\tau_{k}} \Delta_{\tau_{k}}^m H_{E_{k}}\|_{L^2}^2  \\
&\leq 2  \int_{\pa E_{k+1}} \nabla_{\tau_{k+1}}  \Delta_{\tau_{k+1}}^m H_{E_{k+1}}\cdot  ( \nabla_{\tau_{k+1}} \Delta_{\tau_{k+1}}^m H_{E_{k+1}}  - \nabla_{\tau_{k}} \Delta_{\tau_k}^m H_{E_k}\circ \pi_{\pa E_k})\, \d \Ha^n 
+Ch \, \|\nabla_{\tau_{k}} \Delta_{\tau_{k}}^m H_{E_{k}}\|_{L^2}^2 \\
&\leq - 2h \|\Delta_{\tau_{k+1}}^{m+1} H_{E_{k+1}}\|_{L^2}
+\eps h \, \| R_{2m+1,k}\|_{L^2(\pa E_{k+1})}^2 + \frac{C_m}\eps h\, \|B_{E_{k+1}}\|_{H^{2m+1}}^2
+Ch \, \|\nabla_{\tau_{k}} \Delta_{\tau_{k}}^m H_{E_{k}}\|_{L^2}^2 \\
&\leq - 2h \|\Delta_{\tau_{k+1}}^{m+1} H_{E_{k+1}}\|_{L^2} + 
C_mh \, \left(\eps \,  \|\Delta_{\tau_{k+1}}^{m+1} H_{E_{k+1}} \|_{L^{2}}^2 + \frac{1}{\eps}\|B_{E_{k+1}}\|_{H^{2m+1}}^2\right) + C_m h \,\left(1+ \|\nabla_{\tau_k}\Delta_{\tau_{k}}^m H_{E_{k}}\|_{L^2}^2\right) \\
&\leq - \frac{3h}{2} \|\Delta_{\tau_{k+1}}^{m+1} H_{E_{k+1}}\|_{L^2} + 
C_m h \, \|B_{E_{k+1}}\|_{H^{2m+1}}^2 + C_m h \,\left(1+ \|\nabla_{\tau_k}\Delta_{\tau_{k}}^m H_{E_{k}}\|_{L^2}^2\right).
\end{split}
\]
Again,  Proposition \ref{prop:interpolation} implies that there is $\theta = \theta(m,n) \in (0,1)$ such that
\[
\|B_{E_{k+1}}\|_{H^{2m+1}}^2 \leq C_m \|B_{E_{k+1}}\|_{H^{2m+2}}^{2\theta} \|B_{E_{k+1}}\|_{L^\infty}^{2(1-\theta)}
\]
and we may proceed as previously to obtain   \eqref{eq:thm3-2}.

Let us then prove the claim by induction. To be more precise, under the assumption $h \leq h_0$, we claim the following. 
For every $l \in \N \cup \{0\}$ it holds 

\beq 
\label{eq:thm3-3}
\max_{l+2\leq k \leq K} \big((k-(l+1))h\big)^l \|H_{E_k} \|_{H^l}^2  + \sum_{k=l+2}^K  h \big((k-(l+1))h\big)^l \|H_{E_k}\|_{H^{l+1}}^2 \, \d t  \leq C_l
\eeq
for $C_l = C_l(l,n,m_0,r_0,T)$, provided that $ (l+2) h \leq T$. 
Since $t - lh \leq 3 \lfloor t / h\rfloor h -3(l+1)h $ for every $t \geq (l+2)h$, then by multiplying \eqref{eq:thm3-3} by $3^l$ and recalling the definition for
the approximative solution in \eqref{app flow}, we obtain the statement of the theorem.

Let us  consider first the case $l =0$. 
Since $P(E_k),\|B_{E_k}\|_{L^\infty}\leq C$, then 
$\|H_{E_k}\|_{L^2} \leq C$ for every $k=0,1,\ldots,K$.
By combining this with \eqref{eq:thm3-1} gives us that for every $k=1,2,\ldots,K-1$
\[
\| H_{E_{k+1}}\|_{L^2}^2  - \|H_{E_{k}}\|_{L^2}^2 \leq  - h \|\nabla_{\tau_{k+1}}  H_{E_{k+1}}\|_{L^2}^2  + C h .
\]
We sum over $k=1,2,\dots , K-1$ and use $\|H_{E_k}\|_{L^2} \leq C$ as well as $K h \leq T$ to obtain
\[
 \| H_{E_{K}}\|_{L^2}^2 + \sum_{k=1}^{K-1}  h \|\nabla_{\tau_{k+1}}  H_{E_{k+1}}\|_{L^2}^2  \leq \| H_{E_{1}}\|_{L^2}^2 +   CKh \leq CT.
\] 
Thus, we conclude that  \eqref{eq:thm3-3} holds in the case $l=0$.

Let us then assume that \eqref{eq:thm3-3} holds for $l-1$, where $l \in \N$. We assume that $(l+2)h\leq T$ and prove \eqref{eq:thm3-3} for $l$. To this aim, we denote $K' = K - l$ and $E'_k = E_{k+l}$.
Again, let $\tau_k$ denote the tangential differentiation along $\pa E'_k$. 
Thus, the induction assumption reads as
\beq
\label{eq:thm3-4}
\max_{1 \leq k \leq K'} (kh)^{l-1} \|H_{E'_k} \|_{H^{l-1}}^2  + \sum_{k=1}^{K'} h (kh)^{l-1} \|H_{E'_k}\|_{H^{l}(\pa E_t^h)}^2  \leq C_{l-1}.
\eeq
 We divide the argument into  two cases depending whether $l$ is even or odd. 

Let us first assume that $l$ is even and thus is of the form 
$l = 2m$ for $m=1,2,\dots$. By binomial expansion it holds $(k+1)^{2m} - k^{2m} \leq 2m(k+1)^{2m-1}$. Therefore, by multiplying \eqref{eq:thm3-1} by $k^{2m} h^{2m}$ we deduce for every
$k=0,1,2,\ldots, K'$
\[
\begin{split}
 (k+1)^{2m}& h^{2m} \|\Delta_{\tau_{k+1}}^m H_{E'_{k+1}}\|_{L^2}^2 -  k^{2m} h^{2m}  \|\Delta_{\tau_k}^m H_{E'_{k}}\|_{L^2}^2  \\
&= \big((k+1)^{2m} - k^{2m}\big) h^{2m} \|\Delta_{\tau_{k+1}}^m H_{E'_{k+1}}\|_{L^2}^2 +  k^{2m} h^{2m}\big(\|\Delta_{\tau_{k+1}}^m H_{E'_{k+1}}\|_{L^2}^2-\|\Delta_{\tau_{k}}^m H_{E'_{k}}\|_{L^2}^2 \big)\\
&\leq 2m  (k+1)^{2m-1} h^{2m}    \|\Delta_{\tau_{k+1}}^m H_{E'_{k+1}}\|_{L^2}^2  +  C_m k^{2m} h^{2m+1}\left(1+  \|\Delta_{\tau_{k}}^m H_{E'_{k}}\|_{L^2}^2 \right)\\
&\,\,\,\,\,\,\,\,\,\,\,\,  -  k^{2m} h^{2m+1} \|\nabla_{\tau_{k+1}}\Delta_{\tau_{k+1}}^m H_{E'_{k+1}}\|_{L^2}^2 .
\end{split}
\]
Fix any $j=2,\ldots K'$. Summing the previous estimate from $k =0$ to $k = j-1$ and using the fact $K'h \leq T$ yields
\[
\begin{split}
 &j^{2m} h^{2m}  \|\Delta_{\tau_j}^m H_{E'_{j}}\|_{L^2}^2 \\
&\leq  C_m \sum_{k=0}^{j-1}  (k+1)^{2m-1} h^{2m}    \|\Delta_{\tau_{k+1}}^m H_{E'_{k+1}}\|_{L^2}^2  +  C_m\sum_{k=0}^{j-1}(kh) k^{2m-1} h^{2m} \|\Delta_{\tau_{k}}^m H_{E'_{k}}\|_{L^2}^2  + C_m\sum_{k=0}^{j-1} h \, k^{2m} h^{2m}\\
&\,\,\,\,\,\,\,\,\,\,\,\,  - \sum_{k=0}^{j-1}  k^{2m} h^{2m+1} \|\nabla_{\tau_{k+1}}\Delta_\tau^m H_{E'_{k+1}}\|_{L^2}^2  \\
&\leq C_m(1+T) \sum_{k=1}^{j} k^{2m-1} h^{2m}    \|\Delta_{\tau_{k}}^m H_{E'_{k}}\|_{L^2}^2  +  C_m \int_{0}^{K'h} s^{2m} \, \d s  
-   \sum_{k=1}^{j} h \,  (k-1)^{2m} h^{2m} \|\nabla_{\tau_k}\Delta_{\tau_k}^m H_{E'_k}\|_{L^2}^2  \\
&\leq C_m(1+T) \sum_{k=1}^{K'} h \,  k^{2m-1} h^{2m-1} \|\Delta_{\tau_{k}}^m H_{E'_{k}}\|_{L^2}^2  +  C_m  T^{2m+1}  - \sum_{k=1}^{j} h \,  (k-1)^{2m} h^{2m} \|\nabla_{\tau_k}\Delta_{\tau_k}^m H_{E'_k}\|_{L^2}^2.
\end{split}
\]
Thus, reordering the previous estimate and using the induction assumption \eqref{eq:thm3-4}
gives us
\begin{align*}
(j-1)^{2m} h^{2m} \|\Delta_{\tau_j}^m H_{E_j'}\|_{L^2}^2& +\sum_{k=1}^{j} h \,  (k-1)^{2m} h^{2m} \|\nabla_{\tau_k}\Delta_{\tau_k}^m
H_{E_k'}\|_{L^2}^2 \\
\leq & C_m(1+T) \sum_{k=1}^{K'}  k^{2m-1} h^{2m}  \|\Delta_{\tau_{k}}^m H_{E_k'}\|_{L^2}^2  + C_mT^{2m+1}\\
\leq & C_m(1+T) \sum_{k=1}^{K'} h (kh)^{l-1} \| H_{E_{k}'}\|_{H^l}^2  + C_mT^{2m+1} \\
\leq & C_m C_{l-1}+ C_m T^{2m+1}.
\end{align*}

After substituting $E'_k=E_{k+l}$ and reindexing we have for every $j=l+2,\ldots,K$

\[
\big((j-(l+1))h\big)^{l} \|\Delta_{\tau_j}^m H_{E_k}\|_{L^2}^2 + \sum_{k=l+1}^{j} h \,  \big((k-(l+1))h\big)^{l} \|\nabla_{\tau_k}\Delta_{\tau_k}^m H_{E_k}\|_{L^2}^2 \leq  C_m C_{l-1}+ C_m T^{2m+1}.
\]
Since we have $\|B_{E_k}\|_{L^\infty},P(E_k) \leq C$ for every $k \leq K$, then by combining the estimates of Proposition \ref{prop:mean-curv}
with the previous estimate and using $Kh \leq T$ we obtain \eqref{eq:thm3-3}.

The case when $l$ is odd is similar. In this case, we have $l = 2m+1$ for some $m \in \N \cup \{0\}$.
Thus, by using \eqref{eq:thm3-2} in the place of \eqref{eq:thm3-1} we may proceed as in the previous case.
\end{proof}

Let us then focus on Proposition \ref{prop:maaginen-deri-2}. We will begin by proving two technical lemmas which involve  high order derivatives of $d_E$ and $\pi_{\pa E}$. To overcome the technicalities  we adopt the notation where $A_i$ denotes a generic  tensor field, which depends on  the distance function, the normal and the second fundamental form in a smooth way, i.e., 
\beq \label{def:A-i}
A_i = A_i(d_E, \nu_E \circ \pi_{\pa E}, B_E \circ \pi_{\pa E}) \ \ \text{in} \ \ \mathcal{N}_{r/2}(\pa E).
\eeq
 We also adopt here  the notation   $S \star T$  to denote a tensor formed by contraction on some indexes of  tensors $S$ and $T$. If the set $E$ satisfies uniform ball condition, then the quantities $d_E, \nu_E$ and $ B_E \circ \pi_{\pa E}$ are uniformly bounded in $\mathcal{N}_{r/2}(\pa E)$, we may treat $A_i$ in \eqref{def:A-i} as a bounded coefficient. 

It is immediate that  it holds for $x \in \pa E$ and $u  \in C^2(\pa E)$
\[
\nabla (u \circ \pi_{\pa E})(x) = \nabla_\tau u(x) \quad \text{and} \quad \Delta_{\R^{n+1}} (u \circ \pi_{\pa E})(x) = \Delta_{\tau} u(x).
\]
Let us then derive related formulas for points $x \in \mathcal{N}_{r/2}(\pa E)$ outside $\pa E$.

\begin{lemma}
\label{lem:away-bdr}
Assume $E \subset \R^{n+1}$, with $\Sigma = \pa E$, is bounded and $C^3$-regular set which satisfies uniform ball condition with radius $r$. Then   it holds for $u  \in C^2(\pa E)$  in  $ \mathcal{N}_{r/2}(\pa E)$
 \[
\begin{split}
\nabla (u\circ \pi_{\pa E}) &= \nabla (u\circ \pi_{\pa E}) \circ  \pi_{\pa E}    - d_E \nabla^2d_E  \nabla (u\circ \pi_{\pa E}) \circ \pi_{\pa E} 
\end{split}
\]
and 
\[
\begin{split}
\nabla^2 (u\circ \pi_{\pa E}) = &(P_{\pa E} \circ \pi_{\pa E}) (\nabla^2 (u\circ \pi_{\pa E}) \circ  \pi_{\pa E}   )
- \nabla d_{E} \otimes\nabla^2d_{E}   \nabla (u\circ \pi_{\pa E}) \circ  \pi_{\pa E} \\
&+ d_{E} \, A_1 \star \nabla^2 (u\circ \pi_{\pa E}) \circ \pi_{\pa E} +  d_{E} \, A_2 \star \nabla (B_{E} \circ \pi_{\pa E}) \circ \pi_{\pa E} \star \nabla (u\circ \pi_{\pa E}) \circ \pi_{\pa E} 
\end{split}
\]
where  $A_1, A_2$ are tensor fields  as in \eqref{def:A-i}.  
Moreover, if $\Sigma$ is in addition $C^{k+2}$-regular and $u \in C^k(\Sigma)$  for $k \in \N$, then  for all $x \in  \mathcal{N}_{r/2}(\pa E)$  we may estimate
\[
|\nabla^k (u\circ \pi_{\pa E})(x) | \leq C_k \sum_{|\alpha|\leq k} \big(1+|\tilde \nabla_{\Sigma}^{\alpha_1}  B_{E}(\pi_{\pa E}(x)) | \cdots  |\tilde \nabla_{\Sigma}^{\alpha_{k-1}} B_{E}(\pi_{\pa E}(x)) |\big) \,|\tilde \nabla_{\Sigma}^{\alpha_{k}} u (\pi_{\pa E}(x)) |.  
\]
Here $\tilde \nabla_{\Sigma}$ denotes the covariant derivative on $\Sigma$.  
\end{lemma}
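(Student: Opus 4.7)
The plan is to work with the extension $\tilde{u} := u \circ \pi_{\pa E}$ in the tubular neighborhood $\mathcal{N}_{r/2}(\pa E)$. The key observation is that $\tilde{u}$ is constant along the normal lines to $\pa E$, since $\pi_{\pa E}$ is, so $\nabla \tilde{u} \cdot \nabla d_E \equiv 0$ throughout $\mathcal{N}_{r/2}(\pa E)$. Differentiating this identity yields the normal identity $\nabla^2 \tilde{u} \, \nabla d_E = -\nabla^2 d_E \, \nabla \tilde{u}$, which, together with $\nabla^2 d_E \, \nabla d_E = 0$ (from differentiating $|\nabla d_E|^2 = 1$), will let me convert normal-direction contributions into tangential ones whenever they arise.

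For the first identity, I would apply the chain rule $\nabla \tilde{u} = (\nabla \pi_{\pa E})^T (\nabla \tilde{u} \circ \pi_{\pa E})$ and rewrite \eqref{eq:pro-1} as $\nabla \pi_{\pa E} = (P_{\pa E} \circ \pi_{\pa E}) - d_E \nabla^2 d_E$, using that $\nabla d_E \otimes \nabla d_E = (I - P_{\pa E}) \circ \pi_{\pa E}$ by \eqref{def:proj}. Since $\nabla \tilde{u} \circ \pi_{\pa E}$ is tangential at $\pi_{\pa E}(x)$, the projection $(P_{\pa E} \circ \pi_{\pa E})$ fixes it, and the symmetry of $\nabla^2 d_E$ delivers the claimed formula.

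For the second identity, I would differentiate the first. The leading contribution is $\nabla(\nabla \tilde{u} \circ \pi_{\pa E}) = (\nabla^2 \tilde{u} \circ \pi_{\pa E}) \nabla \pi_{\pa E}$; expanding this via \eqref{eq:pro-1} splits it into a $(\nabla^2 \tilde{u} \circ \pi_{\pa E})(P_{\pa E} \circ \pi_{\pa E})$ piece and a $d_E$-multiple piece. Applying the normal identity at $\pi_{\pa E}(x)$, together with the expansion $\nabla^2 d_E = (B_E \circ \pi_{\pa E})(I + d_E(B_E \circ \pi_{\pa E}))^{-1}$ from \eqref{eq:dist-3} (so that $\nabla^2 d_E = B_E \circ \pi_{\pa E}$ modulo terms explicitly of order $d_E$), one rewrites the $(\nabla^2 \tilde{u} \circ \pi_{\pa E})(P_{\pa E} \circ \pi_{\pa E})$ piece as $(P_{\pa E} \circ \pi_{\pa E})(\nabla^2 \tilde{u} \circ \pi_{\pa E}) - \nabla d_E \otimes (\nabla^2 d_E \nabla \tilde{u} \circ \pi_{\pa E})$ up to $d_E$-multiple corrections. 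All remaining contributions, including $\nabla(-d_E \nabla^2 d_E \nabla \tilde{u} \circ \pi_{\pa E})$, inherit an explicit factor of $d_E$ and, once $\nabla^2 d_E$ and its first derivative are expressed via \eqref{eq:dist-3} as smooth functions of $d_E$, $\nu_E \circ \pi_{\pa E}$, $B_E \circ \pi_{\pa E}$, and $\nabla(B_E \circ \pi_{\pa E})$, collapse into the two schematic forms $d_E A_1 \star (\nabla^2 \tilde{u} \circ \pi_{\pa E})$ and $d_E A_2 \star \nabla(B_E \circ \pi_{\pa E}) \star (\nabla \tilde{u} \circ \pi_{\pa E})$.

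For the pointwise estimate I would proceed by induction on $k$. The base case $k=1$ follows from the first identity together with $\sup_{\mathcal{N}_{r/2}} |d_E \nabla^2 d_E|_{\op} \leq C$ (a consequence of \eqref{eq:dist-3} and \eqref{eq:curva-bound-tri}), which gives $|\nabla \tilde{u}(x)| \leq C|\nabla_\tau u(\pi_{\pa E}(x))| \leq C|\bar\nabla_\Sigma u(\pi_{\pa E}(x))|$. For the inductive step, differentiating a schematic monomial of the form $\prod_j (\nabla^{\beta_j}(B_E \circ \pi_{\pa E})) \cdot (\nabla^{\beta_k} \tilde{u} \circ \pi_{\pa E})$ via the Leibniz/composition formulas \eqref{eq:leibniz} either raises the order of the $\tilde{u}$-factor by one (through the chain rule against $\pi_{\pa E}$, which contributes a further $\nabla \pi_{\pa E}$ factor of bounded size) or spawns a new unit-order factor $\nabla(B_E \circ \pi_{\pa E})$. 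Converting each Euclidean derivative on $\Sigma$ into a covariant one via Lemma \ref{lem:cova-eucl}, and using that $B_E \circ \pi_{\pa E}$, $\nu_E \circ \pi_{\pa E}$, $d_E$ and all their derivatives lie in the $A_i$-class and hence are uniformly controlled, yields the claimed bound with at most $k-1$ curvature factors and exactly one $u$-factor of total order at most $k$. The main obstacle will be the bookkeeping in this induction: one must carefully pair each chain-rule application on $\tilde{u} \circ \pi_{\pa E}$ with its accompanying $\nabla \pi_{\pa E}$ and track how Lemma \ref{lem:cova-eucl} redistributes orders between the $B_E$- and $u$-factors, while ensuring that the count of curvature factors never exceeds $k-1$.
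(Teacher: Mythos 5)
Your treatment of the two displayed identities matches the paper's: chain rule $\nabla(u\circ\pi_{\pa E})=\nabla\pi_{\pa E}\,(\nabla(u\circ\pi_{\pa E})\circ\pi_{\pa E})$, the decomposition \eqref{eq:pro-1} together with the tangency of $\nabla(u\circ\pi_{\pa E})$ at points of $\Sigma$, and then one more differentiation with \eqref{eq:dist-3} and \eqref{eq:pro-2} to absorb everything carrying a factor $d_E$ into the $A_i$-terms. The relation $\nabla^2(u\circ\pi_{\pa E})\nabla d_E=-\nabla^2 d_E\,\nabla(u\circ\pi_{\pa E})$ you extract from $\nabla(u\circ\pi_{\pa E})\cdot\nabla d_E\equiv 0$ is exactly what is needed to produce the term $-\nabla d_E\otimes\nabla^2 d_E\,\nabla(u\circ\pi_{\pa E})\circ\pi_{\pa E}$, so this part is sound.

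There is, however, a genuine gap in your argument for the pointwise estimate. After iterating the schematic identity you are left with Euclidean derivatives $\nabla^{l}(u\circ\pi_{\pa E})$ and $\nabla^{l}(B_E\circ\pi_{\pa E})$ evaluated at $\pi_{\pa E}(x)\in\Sigma$, and you propose to pass to the covariant derivatives $\bar\nabla_\Sigma^{\alpha}u$, $\bar\nabla_\Sigma^{\alpha}B_E$ ``via Lemma \ref{lem:cova-eucl}''. That lemma points in the opposite direction: it bounds covariant derivatives of a restriction by Euclidean derivatives of the ambient function. What you need here is the converse bound $|\nabla^l(u\circ\pi_{\pa E})(y)|\leq C_l\sum(1+|\bar\nabla^{\beta_1}B_E|\cdots)|\bar\nabla^{\beta_{l+1}}u|$ for $y\in\Sigma$, and this converse is \emph{false} for a general ambient extension, since the covariant derivatives of the restriction say nothing about normal derivatives. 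It holds here only because $u\circ\pi_{\pa E}$ is constant along normal lines; the paper devotes the second half of its proof to precisely this step, writing $\Sigma$ locally as a graph of $f$, using $\pa_{n+1}(u\circ\pi_{\pa E})=\sum_i\pa_i f\,\pa_i(u\circ\pi_{\pa E})$ (the graph form of $\nabla(u\circ\pi_{\pa E})\cdot\nu_E=0$) to trade every normal derivative for tangential ones, and then running an induction through the Christoffel symbols to reach the covariant derivatives of $u$ and $B_E$. You have the key structural fact in hand from your opening paragraph, but your induction as written rests on a lemma that does not deliver the conversion you invoke, and the missing converse estimate is the main technical content of the proof.
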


\begin{proof}
Let us  denote $\hat u = u\circ \pi_{\pa E}$ and $\pi =  \pi_{\pa E}$ for short. Since $\pi$ is projection it holds 
\[
\hat u (x) = \hat u( \pi(x))
\]
for all $x \in \mathcal{N}_{r/2}(\pa E)$. By differentiating this we obtain
\[
\nabla \hat u(x) =\nabla \pi(x)   \nabla \hat u ( \pi(x)) . 
\]
The first claim then follows from \eqref{eq:pro-1} and from $\nabla \hat u \cdot (\nu_E \circ \pi)  = 0$. The second claim follows by differentiating the first and by writing $\nabla^2 d_E(x), \nabla^3 d_E(x)$ and $\nabla \pi$ in 
a geometric way by using \eqref{eq:dist-3} and \eqref{eq:pro-2}.

In order to prove the third claim we  observe that we may write the second equality simply as 
\[
\nabla^2 \hat u (x) =  A_1(x) \star  \nabla^2\hat u \big( \pi(x)\big) + A_2(x) \star  \nabla  (B_{E}\circ \pi)(\pi_{\pa E}(x))\star  \nabla \hat u ( \pi(x)) .
\]
By differentiating this $(k-2)$-times and by using \eqref{eq:leibniz}  and \eqref{eq:pro-2} we deduce
\[
|\nabla^k \hat u (x)| \leq C_k \sum_{|\alpha|\leq k} C\big( 1 + | \nabla^{\alpha_1}    (B_{E}\circ \pi)(\pi(x))|\cdots  | \nabla^{\alpha_{k-1}}    (B_{E}\circ \pi)(\pi(x))| \big) |  \nabla^{\alpha_k} \hat u ( \pi(x))|.
\] 
The claim follows once we show that for all $y \in \Sigma$ it holds 
\beq \label{away-bdr-1}
|  \nabla^l \hat u  (y)| \leq  C_l  \sum_{|\beta|\leq l }\big( 1+|\tilde \nabla^{\beta_1}  B_{E}(y) |\cdots  |\tilde \nabla^{\beta_l}  B_{E}(y)|\big) | \bar  \nabla^{\beta_{l+1} } u (y)|,
\eeq
which is the opposite estimate as Lemma \ref{lem:cova-eucl}. 

We argue as in the proof of Lemma \ref{lem:cova-eucl} and assume $y = 0$, $\nu_E(0) = e_{n+1}$  and  write the surface  $\Sigma$ locally as a graph of $f$, i.e., $\Sigma \cap B_r \subset \{ (x',f(x')) : x' \in \R^n\}$ and extended $f$ to $\R^{n+1}$ trivially as $f(x', x_{n+1}) = f(x')$. We may then write the metric tensor and the Christoffel symbols in coordinates as
\[
g_{ij}(x') = \delta_{ij} + \pa_i f(x') \pa_j f(x') \quad \text{and} \quad \Gamma_{jk}^i(x') = g^{il}(x')\pa_{jk}^2 f(x') \pa_l f(x').
\] 
Since $\nu_{E}  = \frac{(-\nabla_{\R^n} f,1)}{\sqrt{1 + |\nabla_{\R^n}f |^2}}$  and $\nabla \hat u \cdot (\nu_E \circ \pi)  = 0$, we have 
\beq \label{away-bdr-2}
\pa_{n+1}  \hat u(y) = \sum_{i =1}^n \pa_i f(\pi(y)) \cdot \pa_i  \hat u(y).
\eeq
Let us denote the $l$th order differential  of the function $x' \to \hat u(x',0)$  as $\nabla_{\R^n}^l \hat u$. Then by applying first \eqref{away-bdr-2} and \eqref{eq:leibniz},  and  then \eqref{eq:cova-eucl} we deduce that 
\[
\begin{split}
|  \nabla^l \hat u  (0)| &\leq  C_l  \sum_{|\beta|\leq l -1}\big( 1+|\nabla^{\beta_1} (\nabla f \circ \pi) (0) |\cdots  |\nabla^{\beta_{l-1}} (\nabla f \circ \pi)  (0) |\big) | \nabla_{\R^n}^{1+\beta_{l} } \hat u (0)|\\
&\leq C_l  \sum_{|\gamma|\leq l-1}\big( 1+|\tilde \nabla^{\gamma_1}  B_E(0)) |\cdots  |\tilde \nabla^{\gamma_{l-1}}  B_E(0)|\big) | \nabla_{\R^n}^{1+\gamma_{l} } \hat u (0)|.
\end{split}
\]

Denote the local chart given by the coordinate parametrization by $\Phi$, i.e.,  $\Phi^{-1}(x') = (x', f(x'))$ and note that $\hat u (\Phi^{-1}(x')) =  u(\Phi^{-1}(x'))$. Fix an index    vector $\beta = (\beta_1, \dots, \beta_n , 0) $ with $|\beta| = m$. Then by \eqref{eq:leibniz} and \eqref{eq:cova-eucl} we obtain after straightforward calculations
\[
\begin{split}
|\nabla^{\beta} \,  (u \circ \Phi^{-1})(0)| &\geq  |\nabla^{\beta} \,  \hat u (0)| -  C_m \sum_{|\gamma|\leq m -1}\big(1+ |\nabla^{1 + \gamma_1} f (0) |\cdots  |\nabla^{1 +\gamma_{m-1}}  f (0)|\big)||\nabla^{\gamma_{m}} \, \hat  u(0)|\\
&\geq  |\nabla^{\beta} \,  \hat u (0)| -  C_m \sum_{|\gamma|\leq m -1}\big(1+ |\tilde \nabla^{\gamma_1}  B_E(0)|\cdots  |\tilde \nabla^{\gamma_{m-1}}  B_E(0)|\big)||\nabla^{\gamma_{m}} \, \hat  u(0)|.
\end{split}
\]
From here we deduce by an inductive argument that 
\[
|\nabla^{\beta} \,  \hat u (0)| \leq C_m \sum_{|\gamma|\leq m}\big(1+ |\tilde \nabla^{\gamma_1} B_E(0)|\cdots  |\tilde \nabla^{\gamma_{m}}  B_E(0)|\big)||\nabla^{\gamma_{m+1}} \,  (u \circ \Phi^{-1})(0)|.
\]
Finally using the definition of the covariant derivative and the expression of the Christoffel symbols we obtain arguing  as in the proof of Lemma \ref{lem:cova-eucl}  that
\[
|\nabla^{m} \,  (u \circ \Phi^{-1})(0)|\leq  C_m \sum_{|\gamma|\leq m}\big(1+ |\tilde \nabla^{\gamma_1}  B_E(0)|\cdots  |\tilde \nabla^{\gamma_{m}}  B_E(0)|\big)||\tilde \nabla^{\gamma_{m+1}} \, u(0)|.
\]
Hence, we have \eqref{away-bdr-1} and the third claim follows.
\end{proof}

Let us from now on  assume $E_1, E_2\subset \R^{n+1}$ are as in  Proposition \ref{prop:maaginen-deri-2}. We write  the equality \eqref{eq:high-deri-1} by using the Euler-Lagrange equation \eqref{euler} as 
\beq \label{eq:deriva-1}
H_{E_2}  - H_{E_1} \circ \pi_{\pa E_1}  =h\,   \Delta_{\tau_2} H_{E_2} + h \, \rho_0(\cdot ) 
\eeq
on  $ \pa E_2$, where the error function is of the form 
\beq \label{def:rho-0}
\rho_0(x) = A_1(x)  + h \, A_2(x) \star \nabla_{\tau_2} H_{E_2}(x) \star \nabla_{\tau_2} H_{E_2}(x) .
\eeq
Here and in the rest of the section $A_i(\cdot)$ denotes  a tensor field which depends smoothly on $d_{E_1}, \nu_{E_1} \circ \pi_{\pa E_1}$, 
 $\nu_{E_2}$, $B_{E_1} \circ \pi_{\pa E_1}$ and on $B_{E_2}$. i.e., 
\beq \label{def:tensor12}
A_i(x) = A_i\big(d_{E_1}(x),\nu_{E_1}(\pi_{\pa E_1}(x)), \nu_{E_2}(x),  B_{E_1}(\pi_{\pa E_1}(x)), B_{E_2}(x) \big).
\eeq

The following lemma  is a consequence of  Lemma \ref{lem:away-bdr}. 
\begin{lemma}
\label{lem:laplace12}
Assume that the sets $E_1, E_2\subset \R^{n+1}$ are as in  Proposition \ref{prop:maaginen-deri-2}. Then  it holds for $u \in C^2(\pa E_1)$ on $ \pa E_2 $  
\[
\begin{split}
\Delta_{\tau_2} (u\circ \pi_{\pa E_1}) = &\Delta_{\tau_1}  u \circ \pi_{\pa E_1}    + h\,  A_1 \star \nabla^2(u\circ \pi_{\pa E_1}) \circ \pi_{\pa E_1}    \\
&+ h^2 \,  A_2 \star   \nabla^2(u\circ \pi_{\pa E_1}) \circ \pi_{\pa E_1} \star  \nabla_{\tau_2} H_{E_2} \star \nabla_{\tau_2} H_{E_2} \\
&+ h\, A_3 \star \nabla (B_{E_1}\circ \pi_{\pa E_1}) \circ \pi_{\pa E_1}   \star \nabla (u\circ \pi_{\pa E_1}) \circ \pi_{\pa E_1}\\
&+ h \,  A_4 \star \nabla_{\tau_2} B_{E_2}   \star \nabla (u\circ \pi_{\pa E_1}) \circ \pi_{\pa E_1}. 
\end{split}
\]
\end{lemma}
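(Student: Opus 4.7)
The plan is to set $\hat u = u\circ \pi_{\pa E_1}$, a smooth ambient function on $\mathcal N_{r_0/2}(\pa E_1)\supset \pa E_2$, and start from the pointwise identity
\[
\Delta_{\tau_2} \hat u \;=\; \Tr(\nabla^2 \hat u) \;-\; \nabla^2 \hat u(\nu_{E_2},\nu_{E_2}) \;-\; H_{E_2}\,\nabla \hat u\cdot \nu_{E_2} \qquad \text{on }\pa E_2,
\]
which is the standard formula for the Laplace--Beltrami of an ambient function restricted to a hypersurface. Each of the three terms on the right is then expanded using Lemma \ref{lem:away-bdr}, the Euler--Lagrange equation \eqref{euler} (which on $\pa E_2$ yields $d_{E_1}=h(\lambda^h - H_{E_2})$ and, after tangential differentiation, $\nabla_{\tau_2} d_{E_1}=-h\,\nabla_{\tau_2} H_{E_2}$), and the normal decomposition \eqref{eq:maagi1}, which writes $\nu_{E_2}-\nu_{E_1}\circ \pi_{\pa E_1} = -\nabla_{\tau_2} d_{E_1}+\phi(|\nabla_{\tau_2} d_{E_1}|^2)\,\nu_{E_2}$ with $\phi$ smooth and $\phi(0)=0$. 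By Proposition \ref{prop:distance-bound} the Lagrange multiplier $\lambda^h$ and the curvature $H_{E_2}$ are uniformly bounded for $h\leq h_0$, so every bounded factor arising in the expansion depends smoothly on the admissible fields $(d_{E_1},\nu_{E_1}\circ \pi_{\pa E_1},\nu_{E_2},B_{E_1}\circ \pi_{\pa E_1},B_{E_2})$ and is absorbed into a generic tensor $A_i$ in the sense of \eqref{def:tensor12}.

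Two preparatory identities drive the computation. First, since $\hat u$ is constant along every normal ray to $\pa E_1$, we have $\nabla \hat u\cdot \nabla d_{E_1}=0$ in $\mathcal N_{r_0/2}(\pa E_1)$; differentiating yields $\nabla^2\hat u\,\nabla d_{E_1} = -\nabla^2 d_{E_1}\nabla \hat u$ and, crucially, $\nabla^2 \hat u(\nu_{E_1}\circ \pi_{\pa E_1},\nu_{E_1}\circ \pi_{\pa E_1})=0$, because $\nabla^2 d_{E_1}\nabla d_{E_1}=0$. Second, by \eqref{eq:dist-3} one may write $\nabla^2 d_{E_1}=A\star (B_{E_1}\circ \pi_{\pa E_1})$ with $A$ a smooth bounded tensor on $\pa E_2$ in the admissible arguments, using $\|d_{E_1}\|_{L^\infty(\pa E_2)}\|B_{E_1}\|_{L^\infty(\pa E_1)}\leq 1/2$ granted by Proposition \ref{prop:distance-bound}.

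With these tools the three pieces combine as follows. The trace $\Tr(\nabla^2 \hat u)$ decomposes by Lemma \ref{lem:away-bdr} into $(\Delta_{\tau_1} u)\circ \pi_{\pa E_1}$ from the leading projection term, zero from the cross term $\Tr(\nabla d_{E_1}\otimes \nabla^2 d_{E_1}\nabla\hat u\circ \pi_{\pa E_1})$ (by $\nabla^2 d_{E_1}\nabla d_{E_1}=0$), and, since $d_{E_1}=O(h)$, the $hA_1\star \nabla^2 \hat u\circ \pi_{\pa E_1}$ and $hA_3\star \nabla(B_{E_1}\circ \pi_{\pa E_1})\circ \pi_{\pa E_1}\star \nabla \hat u\circ \pi_{\pa E_1}$ terms from the two $d_{E_1}$-scaled remainders. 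In $\nabla^2\hat u(\nu_{E_2},\nu_{E_2})$ we expand $\nu_{E_2}=\nu_{E_1}\circ \pi_{\pa E_1}+\delta$: the constant term vanishes by the first preparatory identity; the cross term $2\nabla^2 \hat u(\nu_{E_1}\circ \pi_{\pa E_1},\delta)=-2\nabla^2 d_{E_1}\nabla \hat u\cdot \delta$ produces an $hA_4\star \nabla_{\tau_2} B_{E_2}\star \nabla \hat u\circ \pi_{\pa E_1}$ term after substituting $\nabla_{\tau_2} d_{E_1}=-h\nabla_{\tau_2} H_{E_2}$ and viewing $\nabla_{\tau_2} H_{E_2}$ as a metric contraction of $\nabla_{\tau_2} B_{E_2}$; and $\nabla^2\hat u(\delta,\delta)$ produces the $h^2 A_2$ term of the stated form. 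Finally, $H_{E_2}\nabla \hat u\cdot \nu_{E_2}=H_{E_2}\nabla \hat u\cdot \delta$ by the first preparatory identity and is again an $hA_4$-type contribution.

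The main obstacle is the bookkeeping needed to verify that every auxiliary coefficient appearing in the expansion, notably the inverse $(I+d_{E_1}B_{E_1}\circ \pi_{\pa E_1})^{-1}$ from \eqref{eq:dist-3} and the scalar factor $\phi$ from \eqref{eq:maagi1}, is smooth in the admissible arguments of \eqref{def:tensor12} in the chosen regime, and that the various powers of $h$ produced by $d_{E_1}=O(h)$ and $\delta=O(h)$ collect into precisely one power of $h$ in the $A_1,A_3,A_4$ contributions and two powers in the $A_2$ contribution, any surplus powers being safely absorbed via the smooth $h$-dependence of $d_{E_1}$ into the tensor fields.
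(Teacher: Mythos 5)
Your proof is correct and follows essentially the same route as the paper: the same starting identity $\Delta_{\tau_2}\hat u=\Tr(\nabla^2\hat u)-\nabla^2\hat u(\nu_{E_2},\nu_{E_2})-H_{E_2}\,\partial_{\nu_{E_2}}\hat u$, expansion of each piece via Lemma \ref{lem:away-bdr}, the normal decomposition \eqref{eq:maagi1}--\eqref{eq:maagi2}, and the Euler--Lagrange equation to trade $\nabla_{\tau_2}d_{E_1}$ for $-h\nabla_{\tau_2}H_{E_2}$, with surplus factors absorbed into the admissible tensors exactly as in the paper. Your use of the exact orthogonality $\nabla\hat u\cdot\nabla d_{E_1}=0$ (and its differentiated form) to kill the leading normal--normal Hessian contribution is a slightly cleaner bookkeeping device than the paper's projector manipulation, but it is the same argument in substance.
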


\begin{proof}
Let us denote $\hat u = u\circ \pi_{\pa E_1}$ and $\pi = \pi_{\pa E_1}$ for short. Recall that we may write the Laplace-Beltrami on $\pa E_2$ as
\beq \label{eq:laplace12}
\Delta_{\tau_2} \hat u = \Delta_{\R^{n+1}} \hat u - (\nabla^2 \hat u \,\nu_{E_2} \cdot \nu_{E_2}) - H_{E_2}\pa_{\nu_{E_2}} \hat u,  
\eeq
where $\Delta_{\R^{n+1}} \hat u = \text{Tr}(\nabla^2 \hat u)$ denotes the Euclidian Laplacian. Recall that  $P_{\pa E_1}= I - \nu_{E_1} \otimes \nu_{E_1}$ stands for the projection on the (geometric) tangent space. We deduce by applying the trace on  the second equality in  Lemma \ref{lem:away-bdr}, by $\nabla^2 d_{E_1} \nabla d_{E_1} = 0$, and by the Euler-Lagrange equation \eqref{euler} that it holds on $\pa E_2$
\beq \label{eq:laplace12-1}
\begin{split}
 \Delta_{\R^{n+1}} \hat u   =\text{Tr}(\nabla^2 \hat u) = \Delta_{\tau_1}  u \circ \pi   &+ h\,   A_1 \star (\nabla^2 \hat u \circ \pi )    \\
&+ h\,  A_3 \star \nabla (B_{E_1}\circ \pi) \circ \pi   \star (\nabla \hat u \circ \pi ).
\end{split}
\eeq
Similarly we have 
\beq \label{eq:laplace12-2}
\begin{split}
(\nabla^2 \hat u \,\nu_{E_2} )\cdot \nu_{E_2}  = \, &\big((P_{\pa E_1} \circ \pi) (\nabla^2 \hat u \circ \pi) \,\nu_{E_2} \big)  \cdot \nu_{E_2}     \\
&- (\nabla d_{E_1}\cdot \nu_{E_2})  \big( \nabla^2 d_{E_1}   (\nabla \hat u \circ \pi) \cdot \nu_{E_2} \big)\\
&+ h\,  \tilde A_1\star (\nabla^2 \hat u \circ \pi )  + h\, \tilde  A_3 \star \nabla (B_{E_1}\circ \pi) \circ \pi  \star (\nabla \hat u \circ \pi ) .
\end{split}
\eeq
We write 
\[
\big((P_{\pa E_1} \circ \pi) (\nabla^2 \hat u \circ \pi) \,\nu_{E_2} \big)  \cdot \nu_{E_2}  =  \big((P_{\pa E_1} \circ \pi) (\nabla^2 \hat u \circ \pi) \,(\nu_{E_2}- \nu_{E_1} \circ \pi) \big)  \cdot (\nu_{E_2}- \nu_{E_1} \circ \pi)
\]
and 
\[
(\nabla d_{E_1}\cdot \nu_{E_2})  \big( \nabla^2 d_{E_1}   (\nabla \hat u \circ \pi) \cdot \nu_{E_2} \big)    = (\nabla d_{E_1}\cdot \nu_{E_2})  \big( \nabla^2 d_{E_1}   (\nabla \hat u \circ \pi) \cdot (\nu_{E_2}- \nu_{E_1} \circ \pi) \big).  
\]
We then use \eqref{eq:maagi2} to write $\nu_{E_2}- \nu_{E_1} \circ \pi $ as
\[
\nu_{E_2}- \nu_{E_1} \circ \pi = a_1  \nabla_{\tau_2} d_{E_1}  + a_2 \, (\nu_{E_1}\circ \pi) 
\]
for functions $a_1$ and $a_2$ which depend on $|\nabla_{\tau_2} d_{E_1}(x)|^2$. Therefore we may write \eqref{eq:laplace12-2} by the Euler-Lagrange equation \eqref{euler} as
\beq \label{eq:laplace12-3}
\begin{split}
(\nabla^2 \hat u \,\nu_{E_2} )\cdot \nu_{E_2}   &= h\,  A_1 \star (\nabla^2 \hat u \circ \pi ) \\
&+h^2 \,  A_2 \star  (\nabla^2 \hat u \circ \pi) \star  \nabla_{\tau_2} H_{E_2} \star \nabla_{\tau_2} H_{E_2}     \\
&+ h\,  A_3 \star \nabla (B_{E_1}\circ \pi) \circ \pi    \star (\nabla \hat u \circ \pi )\\
&+ h\,  A_4  \star \nabla_{\tau_2} H_{E_2}   \star (\nabla \hat u \circ \pi).
\end{split}
\eeq

We use  the first equality in  Lemma \ref{lem:away-bdr},  \eqref{eq:maagi2} and the Euler-Lagrange equation \eqref{euler} to write  on $\pa E_2$
\beq \label{eq:laplace12-4}
\begin{split}
\pa_{\nu_{E_2}} \hat u &= (\nabla  \hat u \circ  \pi)\cdot  \nu_{E_2}+ h \, A_3    \star (\nabla \hat u \circ \pi )\\
&=  (\nabla  \hat u \circ  \pi) \cdot  (\nu_{E_2} -\nu_{E_1}\circ \pi) + h \, A_3  \star (\nabla \hat u \circ \pi )\\
&= h\,  A_4 \star \nabla_{\tau_2} H_{E_2}   \star (\nabla  \hat u \circ  \pi) +  h \, A_3  \star (\nabla \hat u \circ \pi ).
\end{split}
\eeq
The claim then follows from \eqref{eq:laplace12},  \eqref{eq:laplace12-1}, \eqref{eq:laplace12-3} and \eqref{eq:laplace12-4}.
\end{proof}

We may now prove Proposition \ref{prop:maaginen-deri-2}.
\begin{proof}[\textbf{Proof of Proposition \ref{prop:maaginen-deri-2}}]
We prove only the first equality since the second follows by differentiating the first. We point out that since $E_1$ is $C^{2m+3}$-regular, then by Lemma \ref{lemma:improved-regularity} the set $E_2$ is $C^{2m+5}$-regular. In particular, we have the necessary regularity for the proceeding calculations.  To that aim we recall that by \eqref{eq:deriva-1} it holds 
\beq \label{eq:maagi-deri-1}
H_{E_2}  - H_{E_1} \circ \pi_{\pa E_1}   =h\,   \Delta_{\tau_2} H_{E_2} + h \, \rho_0 \qquad \text{on }  \pa E_2,
\eeq
where 
\[
\rho_0(x) = A_1(x)  + h \, A_2(x) \star \nabla_{\tau_2} H_{E_2}(x) \star  \nabla_{\tau_2} H_{E_2}(x) .
\]
We  differentiate \eqref{eq:maagi-deri-1},  use Lemma \ref{lem:laplace12} and have  on $\pa E_2$
\[
\Delta_{\tau_2} H_{E_2}(x)  - \Delta_{\tau_1} H_{E_1}  \circ \pi_{\pa E_1}  =h\,   \Delta_{\tau_2}^2 H_{E_2} + h \rho_2 +  h \, \Delta_{\tau_2} \rho_0,
\]
where 
\[
\begin{split}
\rho_2 = &A_1 \star \nabla^2(H_{E_1}\circ \pi_{\pa E_1}) \circ \pi_{\pa E_1}    \\
&+ h\,  A_2 \star  \nabla^2(H_{E_1}\circ \pi_{\pa E_1}) \circ \pi_{\pa E_1}  \star  \nabla_{\tau_2} H_{E_2} \star \nabla_{\tau_2} H_{E_2} \\
&+  A_3 \star \nabla (B_{E_1}\circ \pi_{\pa E_1}) \circ \pi_{\pa E_1}  \star \nabla (H_{E_1}\circ \pi_{\pa E_1}) \circ \pi_{\pa E_1}\\
&+   A_4 \star \nabla_{\tau_2} B_{E_2}  \star \nabla (H_{E_1} \circ \pi_{\pa E_1}) \circ \pi_{\pa E_1}. 
\end{split}
\]
We  continue and deduce  by an iterative argument  that it holds on $\pa E_2$
\[
\Delta_{\tau_2}^m H_{E_2}  - \Delta_{\tau_1} H_{E_1}^m  \circ \pi_{\pa E_1}  =h\,   \Delta_{\tau_2}^{m+1} H_{E_2} + h \sum_{k=0}^m \Delta_{\tau_2}^{m -k}  \rho_{2k},
\]
where $\rho_0$ is defined in \eqref{def:rho-0} and $\rho_{2k}$ for $k \geq 1$ is 
\[
\begin{split}
\rho_{2k} = &A_1 \star \nabla^2(\Delta_{\tau_1}^{k-1} H_{E_1}\circ \pi_{\pa E_1}) \circ \pi_{\pa E_1}    \\
&+ h\,  A_2 \star  \nabla^2(\Delta_{\tau_1}^{k-1}  H_{E_1}\circ \pi_{\pa E_1})\circ \pi_{\pa E_1}  \star  \nabla_{\tau_2} H_{E_2} \star \nabla_{\tau_2} H_{E_2} \\
&+  A_3 \star \nabla (B_{E_1}\circ \pi_{\pa E_1})\circ \pi_{\pa E_1}   \star \nabla (\Delta_{\tau_1}^{k-1} H_{E_1}\circ \pi_{\pa E_1})\circ \pi_{\pa E_1} \\
&+   A_4 \star \nabla_{\tau_2} B_{E_2} \star \nabla (\Delta_{\tau_1}^{k-1} H_{E_1} \circ \pi_{\pa E_1})\circ \pi_{\pa E_1}. 
\end{split}
\]
We have thus derived a formula for the error terms in the statement of Proposition \ref{prop:maaginen-deri-2}, i.e., we have 
\[
R_{2m}(x) = \sum_{k=0}^m \Delta_{\tau_2}^{m -k}  \rho_{2k}(x).
\]
We need to estimate the norm $\|R_{2m}\|_{L^2(\Sigma_2)}$, where $\Sigma_2 = \pa E_2$. The idea is that  the total amount of derivatives acting on the curvature terms in  $\Delta_{\tau_2}^{m -k}  \rho_{2k}$ is for most of the terms at most  $2m$. The only difference is the second row in the definition of $\rho_{2k}$, which total amount of derivatives is higher but it has an extra $h$ as a coefficient. Therefore we need to treat this term more carefully. 

 Recall that the tensor fields $A_i(\cdot)$ depend on $d_{E_1}, \nu_{E_1} \circ \pi_{\pa E_1}$, 
 $\nu_{E_2}$, $B_{E_1} \circ \pi_{\pa E_1}$ and on $B_{E_2}$  as stated in   \eqref {def:tensor12}.  Denote $\pi = \pi_{\pa E_1}$ for short.  We use repeatedly  \eqref{eq:leibniz},  Lemma \ref{lem:cova-eucl} and  the last inequality in Lemma \ref{lem:away-bdr}  and obtain after long but straightforward calculations  the following  pointwise  estimate   for all $x \in \pa E_2$ 
\beq \label{eq:maagi-deri-2}
\begin{split}
\big| \Delta_{\tau_2}^{m -k}  \rho_{2k}(x) \big| \leq C &+ C \sum_{|\alpha|\leq 2m} |\tilde \nabla^{\alpha_1} B_{\Sigma_2}(x)| \cdots  |\tilde \nabla^{\alpha_{2m}} B_{\Sigma_2}(x)|  \\
&+ C \sum_{|\alpha|\leq 2m} |\tilde \nabla^{\alpha_1} B_{\Sigma_1}(\pi(x))| \cdots  |\tilde \nabla^{\alpha_{2m}} B_{\Sigma_1}(\pi(x))|\\
&+  C h \sum_{|\alpha|\leq 2m} (|\tilde \nabla^{\alpha_1} B_{\Sigma_2}(x)| + |\tilde \nabla^{\alpha_1} B_{\Sigma_1}(\pi(x))|)  \cdots   (|\tilde \nabla^{\alpha_{2m}} B_{\Sigma_2}(x)| + |\tilde \nabla^{\alpha_{2m}} B_{\Sigma_1}(\pi(x))|) \cdots \\
&\,\,\,\,\,\,\,\,\,\,\,\,\,\,\,\,\,\,\,\,\,\,\,\,\,\,\,\,\,\,\,\,\,\,\, \cdots |\tilde \nabla^{1+\alpha_{2m+1}} H_{E_2}(x)| \,  |\tilde \nabla^{1+\alpha_{2m+2}} H_{E_2}(x)|.
\end{split}
\eeq 
We use the uniform curvature bounds $\| B_{\Sigma_1}\|_{L^\infty}, \| B_{\Sigma_2}\|_{L^\infty} \leq C$ and Proposition \ref{prop:kato-ponce} to estimate 
\[
\sum_{|\alpha|\leq 2m} \| |\tilde \nabla^{\alpha_1} B_{\Sigma_2}(x)| \cdots  |\tilde \nabla^{\alpha_{2m}} B_{\Sigma_2}(x)|\|_{L^2(\Sigma_2)} \leq C \|B_{\Sigma_2}\|_{H^{2m}(\Sigma_2)}
\]
and
\[
\begin{split}
\sum_{|\alpha|\leq 2m} &\| |\tilde \nabla^{\alpha_1} B_{\Sigma_1}(\pi(x))| \cdots  |\tilde \nabla^{\alpha_{2m}} B_{\Sigma_1}(\pi(x))|\|_{L^2(\Sigma_2)} \\
&\leq C \sum_{|\alpha|\leq 2m} \| |\tilde \nabla^{\alpha_1} B_{\Sigma_1}(y)| \cdots  |\tilde \nabla^{\alpha_{2m}} B_{\Sigma_1}(y)|\|_{L^2(\Sigma_1)}  \leq C \|B_{\Sigma_1}\|_{H^{2m}(\Sigma_1)}.
\end{split}
\]
We are left with the last term in \eqref{eq:maagi-deri-2}. As we already mentioned, this term has different scaling with respect to  $h$.  We use the Euler-Lagrange equation \eqref{euler},  \eqref{gradinterpolation}  and $\|d_{E_1}\|_{L^\infty(\pa E_2)} \leq Ch$ from Proposition \ref{prop:distance-bound} to deduce that 
\[
\|\tilde \nabla H_{E_2}\|_{L^\infty(\Sigma_2)}^2 \leq \frac{C}{h}. 
\]
Therefore we have by Proposition \ref{prop:kato-ponce}  
\[
\begin{split}
h &\sum_{|\alpha|\leq 2m} \|(|\tilde \nabla^{\alpha_1} B_{\Sigma_2}(x)| + |\tilde \nabla^{\alpha_1} B_{\Sigma_1}(\pi(x))|)  \cdots   (|\tilde \nabla^{\alpha_{2m}} B_{\Sigma_2}(x)| + |\tilde \nabla^{\alpha_{2m}} B_{\Sigma_1}(\pi(x))|)  \cdots \\
&\,\,\,\,\,\,\,\,\,\,\,\,\,\,\,\,\,\,\,\,\,\,\,\,\,\,\,\,\,\,\,\,\,\,\, \cdots  |\tilde \nabla^{1+\alpha_{2m+1}} H_{E_2}(x)| \,  |\tilde \nabla^{1+\alpha_{2m+2}} H_{E_2}(x)|\|_{L^2(\Sigma_2)}\\
&\leq Ch \|\tilde \nabla H_{E_2}\|_{L^\infty(\Sigma_2)}^2 \big(\|B_{\Sigma_1}\|_{H^{2m}(\Sigma_1)} +\|B_{\Sigma_2}\|_{H^{2m}(\Sigma_2)}\big)  + Ch \|\tilde \nabla H_{E_2}\|_{L^\infty(\Sigma_2)}  \|H_{E_2}\|_{H^{2m+1}(\Sigma_2)} \\
&\leq  C \|B_{\Sigma_1}\|_{H^{2m}(\Sigma_1)} + C \|B_{\Sigma_2}\|_{H^{2m}(\Sigma_2)}   + C \sqrt{h} \|H_{E_2}\|_{H^{2m+1}(\Sigma_2)} \\
&\leq C \|B_{\Sigma_1}\|_{H^{2m}(\Sigma_1)}  + C \|B_{\Sigma_2}\|_{H^{2m+1}(\Sigma_2)} 
\end{split}
\]
when $h \leq 1$, and the claim follows. 
\end{proof}

Let us conclude this section by discussing briefly how we obtain Theorem \ref{thm1} and Corollary \ref{coro}  from the results in Sections 4 and 5. 
We obtain first from Lemma \ref{lem:2-point-arg} and from Theorem \ref{thm2} that the approximative flow  $(E_t^h)_k$ satisfies  uniform ball condition with radius $r_0/2$ for $t \leq T_0$ and we have 
\begin{equation}
\label{eq:final-1}
\frac{\|S_{E_{t+h}^h}\|_{L^\infty} -\|S_{E_t^h}\|_{L^\infty}}{h} \leq C_n \|S_{E_t^h}\|_{L^\infty}^3.
\end{equation}
Then we use Theorem  \ref{thm3}  to deduce that for $t \in [\delta,T_0]$ the sets  $E_t^h$ are uniformly $C^3$-regular when $h$ is small enough. By Ascoli-Arzela theorem we may pass the estimate \eqref{eq:final-1} to the limit as $h \to 0$ and conclude that the function $t \mapsto \sup_{s \leq t}  \|S_{E_{s}}\|_{L^\infty}$  is locally  Lipschitz continuous and satisfies
\beq \label{eq:smoothing-flat} 
\frac{\d}{\d t} \big(\sup_{s \leq t}  \|S_{E_{s}}\|_{L^\infty}\big) \leq C_n \big(\sup_{s \leq t}  \|S_{E_{s}}\|_{L^\infty}\big)^3
\eeq
for almost every $t \geq 0$ as long as $\sup_{s \leq t}  \|S_{E_{s}}\|_{L^\infty}$ remains bounded. The inequality \eqref{eq:smoothing-flat}  implies that the uniform ball condition is an open condition. To be more precise  if the flat flow $(E_t^h)_t$, starting from $E_0$, satisfies $\sup_{t \leq T}  \|S_{E_{t}}\|_{L^\infty} \leq C$, then by \eqref{eq:smoothing-flat} there is $\delta >0$  such that 
\[
\sup_{t \leq T+\delta}  \|S_{E_{t}}\|_{L^\infty} \leq 2C.
\]
This together  with the estimate in Theorem  \ref{thm3} implies Theorem \ref{thm1}. 

The consistency principle follows from the regularity in a rather straightforward way. Indeed, we obtain by the uniform regularity of the approximate flat flow $(E_t^h)_{t\in [0,T]}$ and by  the Euler-Lagrange equation \eqref{euler}  that the signed distance function satifies
\[
\partial_t d_{E_t}(x) = \Delta_{\R^{n+1}} d_{E_t}(\pi_{\pa E_t}(x)) + f(t)
\]
for $t \leq T$ and for $x $ in a neighborhood of $\pa E_t$, where $f(t)$ is a  bounded function ot time. From here we may conclude that the flat flow satisfies
\[
V_t = - H_{E_t} + f(t). 
\]
Since the flat flow preserves the volume then necessarily $f(t) = \fint_{\pa E_t} H_{E_t} \ \d \Ha^n$ and thus it is a solution to  \eqref{eq:VMCF}.


\section*{Acknowledgments}
V.J. was supported by the Academy of Finland grant 314227. J.N. was partially supported by ERC-CZ grant LL2105 and the University Centre UNCE/SCI/023 of Charles University.


\end{document}